\newcommand{\R}{\mathbb{R}}
\newcommand{\inv}{^{-1}}
\newcommand{\ol}{\overline}
\newcommand{\sm}{\setminus}
\DeclareMathOperator{\Div}{\text{div}}
\renewcommand{\tilde}{\widetilde}
\newtheorem{thm}{Theorem}
\newtheorem{definition}[thm]{Definition}
\newtheorem{lemma}[thm]{Lemma}
\newtheorem{prop}[thm]{Proposition}
\newtheorem{cor}[thm]{Corollary}
\newtheorem{conj}[thm]{Conjecture}
\begin{document}
\title[Zero area singularities]{A geometric theory of zero area singularities\\ in general relativity}
\date{January 25, 2012}
\author{Hubert L. Bray}
\address{Mathematics Department\\
Duke University, Box 90320\\
Durham, NC 27708-0320\\}
\email{bray@math.duke.edu}
\thanks{The research of the first author was supported in part by NSF grant \#DMS-0706794.}
\author{Jeffrey L. Jauregui}
\address{Department of Mathematics\\
David Rittenhouse Lab\\
209 South 33rd Street\\
Philadelphia, PA 19104\\}
\email{jauregui@math.upenn.edu}

\begin{abstract}
The Schwarzschild spacetime metric of negative mass is well-known to contain a naked singularity.  In a spacelike 
slice, this singularity of the metric is characterized by the property that nearby surfaces 
have arbitrarily small area.
We develop a theory of such ``zero area singularities'' in Riemannian manifolds, generalizing far beyond the
Schwarzschild case (for example, allowing the singularities to have nontrivial topology).
We also define the mass of such singularities. The main result of this paper is a lower bound
on the ADM mass of an asymptotically flat manifold of nonnegative scalar curvature in terms of the
masses of its singularities, assuming a certain conjecture in conformal geometry.
The proof relies on the Riemannian Penrose inequality \cite{bray_RPI}.  Equality is attained in the 
inequality by the Schwarzschild metric of negative mass.  An immediate corollary is a version of the 
positive mass theorem that allows for certain types of incomplete metrics.
\end{abstract}

\maketitle
\section{Introduction: the negative mass Schwarzschild metric}
The first metrics one typically encounters in the study of general relativity are the 
Minkowski spacetime metric and the Schwarzschild spacetime metric,
the latter given by
\begin{equation*}
ds^2 = -\left(1 - \frac{2m}{R}\right) dt^2 + \left(1 - \frac{2m}{R}\right)\inv dR^2 + R^2 
(d\theta^2 + \sin^2 \theta \;d\phi^2),
\end{equation*}
where $t$ is the time coordinate, $(R, \theta,\phi)$
are spatial spherical coordinates, and $m$ is some positive number. This
represents the exterior region $R > 2m$ of a non-rotating black hole of mass $m$ in vacuum.  A 
spacelike slice of this Lorentzian metric can be obtained by taking a level set of 
$t$; under a coordinate transformation $R=r\left(1+\frac{m}{2r}\right)^2$, the resulting 
3-manifold is isometric to $\R^3$ minus the ball of radius $m/2$ about the 
origin, with the conformally flat metric
\begin{equation}
ds^2 = \left(1 + \frac{m}{2r}\right)^4 \delta,
\label{riem_schwarz_metric}
\end{equation}
where $\delta=dx^2+dy^2+dz^2$ is the usual flat metric on $\R^3$ and $r=\sqrt{x^2+y^2+z^2}$.
Its boundary is a minimal surface that represents the apparent horizon of the black 
hole.  We refer to (\ref{riem_schwarz_metric}) as the \emph{Schwarzschild metric} (of mass $m$).

Consider instead the metric $(\ref{riem_schwarz_metric})$ with $m<0$.  This gives a Riemannian 
metric on $\R^3$ minus a closed ball of 
radius $|m|/2$ about the origin that approaches zero near its inner boundary.  One may loosely
think of this manifold as a slice of a spacetime with a single ``black hole of 
negative mass.''  In fact, this metric has a naked singularity, as the singularity on the 
inner boundary is not enclosed
by any apparent horizon.  In this paper we introduce a theory of such ``zero area 
singularities'' (ZAS), modeled on the ``Schwarzschild ZAS metric'' (i.e., 
$(\ref{riem_schwarz_metric})$ 
with $m<0$), yet far more 
general.  Some of the problems we address are: 
\begin{compactenum}
\item When can such singularities be ``resolved''?
\item What is a good definition of the mass of such a singularity?
\item Can the ADM mass of an asymptotically flat manifold of nonnegative scalar curvature be
estimated in terms of the masses of its singularities?
\end{compactenum}
The third question is motivated by the positive mass theorem \cites{schoen_yau,witten} 
and Riemannian Penrose inequality \cites{imcf,bray_RPI} (Theorems \ref{pmt} and \ref{rpi} below).  The former states that, under 
suitable conditions, the ADM mass of an asymptotically flat 
3-manifold is nonnegative, 
with zero mass occurring only for the flat metric on $\R^3$.  The latter
improves this to provide a lower 
bound on the ADM mass  
in terms of the masses of its ``black holes.''  Here, the case of equality is attained by the 
Schwarzschild metric with $m > 0$.  See appendix A for details on asymptotic flatness and ADM 
mass.
 
The main theorem of this paper is the Riemannian ZAS inequality (see Theorems \ref{thm_zas_ineq} and \ref{thm_zas_ineq_full}),
introduced by the first author \cite{bray_npms}.  It is an analog of the Riemannian Penrose inequality, 
but for zero area singularities instead of black holes.
Specifically, this inequality gives a lower bound for the ADM mass of an asymptotically flat manifold in terms of the
masses of its ZAS. Its proof assumes an unproven
conjecture (Conjecture \ref{conj_conformal}) regarding the outermost minimal area 
enclosure of the boundary, with respect to a conformal metric.  While the conjecture is known to be true in some cases,
proving it remains an open problem.  Although we shall write ``the'' Riemannian ZAS inequality in this paper, we remark that other similar
inequalities may be discovered in the future that also deserve this title.  Table \ref{table_thms} illustrates how this theorem fits together with 
the positive mass theorem and Riemannian Penrose inequality.
 
\begin{table}[ht]
\caption{The Schwarzschild metric as a case of equality}
{\small
\begin{tabular}{@{}lll@{}} \toprule
sign($m$) & metric $\left(1+\frac{m}{2r}\right)^4 \delta$ & unique case of equality of\\  
\midrule
$0$ & Euclidean & positive mass theorem\\ 
$+$ & Schwarzschild metric & Riemannian penrose inequality\\ 
$-$ & Schwarzschild ZAS metric & Riemannian ZAS inequality (Theorem \ref{thm_zas_ineq})\\ \bottomrule
\end{tabular}}
\label{table_thms}
\end{table}
For reference, we recall the following theorems.  The geometric assumption of nonnegative
scalar curvature is equivalent, physically, to the dominant energy condition (for 
totally geodesic slices of spacetimes).

\begin{thm}[Positive mass theorem \cite{schoen_yau}] Let $(M,g)$ be a complete asymptotically flat 
Riemannian 3-manifold (without boundary) of nonnegative scalar curvature with ADM mass $m$. 
Then $m \geq 0$, with equality holding if and only if $(M,g)$ is isometric to $\R^3$ with the flat metric.
\label{pmt}
\end{thm}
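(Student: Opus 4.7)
The plan is to prove this by the minimal-surface method of Schoen--Yau. Arguing by contradiction, I first suppose $m < 0$. After a standard density/conformal-perturbation argument, one may assume $g$ has harmonic asymptotics, so that outside a compact set $g = u^4 \delta$ with $u = 1 + \frac{m}{2r} + O(r^{-2})$; nonnegative scalar curvature and the condition $m < 0$ are both preserved. This reduction gives enough decay and freedom to perturb, and the negativity of $m$ is what will drive the construction of a forbidden stable minimal surface.

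Using the negativity of $m$, I would set up a Plateau-type problem in $(M,g)$: take a large coordinate cylinder $C_R$ in the asymptotic chart together with a capping coordinate plane far below the origin. Because $m < 0$, the capping plane has strictly smaller $g$-area than the cylinder's top, so the area minimizer in $C_R$ with the corresponding boundary data does not retract to the boundary. Taking $R \to \infty$, geometric measure theory produces a complete, properly embedded, stable minimal surface $\Sigma \subset M$ asymptotic to a plane in the asymptotic chart. Writing the stability inequality for $\Sigma$ tested against a function $\varphi$ and combining with the Gauss equation $2K_\Sigma = R_M - 2\mathrm{Ric}(\nu,\nu) - |A|^2$ (using $H = 0$), one obtains
\[
\int_\Sigma \bigl( \tfrac{1}{2} R_M + \tfrac{1}{2} |A|^2 + K_\Sigma \bigr) \varphi^2 \, dA \leq \int_\Sigma |\nabla \varphi|^2 \, dA.
\]
Since $R_M \geq 0$ and $\Sigma$ has planar topology near infinity, the Cohn-Vossen/Huber theory controls $\int_\Sigma K_\Sigma$ from below, and with a logarithmic cutoff for $\varphi$ one extracts a contradiction (or, equivalently, conformally uniformizes $\Sigma$ and appeals to the nonexistence of a complete metric of positive scalar curvature on the plane). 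For the rigidity statement $m = 0 \Rightarrow g$ flat, one perturbs $g$ by a compactly supported conformal factor to strictly decrease the mass below zero while preserving $R \geq 0$, contradicting the strict inequality just established.

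The main obstacle I anticipate is the second step: producing the complete stable minimal surface with prescribed asymptotic plane, and controlling its regularity and growth rate carefully enough to justify the stability-test-function argument. The Gauss-equation manipulation, the density reduction to harmonic asymptotics, and the rigidity perturbation are by now standard, but the non-compactness of $\Sigma$ and the analysis at infinity are the distinctive and technically demanding ingredients of the Schoen--Yau proof. Witten's spinor argument provides an alternative route that bypasses minimal surfaces altogether at the cost of requiring a spin structure and a delicate Dirac-operator analysis on the asymptotically flat end.
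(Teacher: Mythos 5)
The paper does not prove this statement: Theorem \ref{pmt} is quoted as background and attributed to Schoen--Yau \cite{schoen_yau}, with Witten's spinor proof \cite{witten} mentioned as an alternative, so there is no in-paper argument to compare against. Your outline is a faithful sketch of the Schoen--Yau minimal-surface proof that the citation points to: reduction to harmonic asymptotics, use of $m<0$ to trap an area-minimizing limit surface in a slab and extract a complete stable minimal $\Sigma$ asymptotic to a plane, the stability-plus-Gauss-equation inequality with logarithmic cutoffs, and the compactly supported conformal perturbation for rigidity. One correction: substituting $\mathrm{Ric}(\nu,\nu)=\tfrac12 R_M - K_\Sigma - \tfrac12|A|^2$ (from your own Gauss equation with $H=0$) into the stability inequality yields $\int_\Sigma\bigl(\tfrac12 R_M+\tfrac12|A|^2-K_\Sigma\bigr)\varphi^2\leq\int_\Sigma|\nabla\varphi|^2$, with a \emph{minus} sign on $K_\Sigma$; accordingly the endgame pairs the resulting \emph{lower} bound $\int_\Sigma K_\Sigma\geq\tfrac12\int_\Sigma(R_M+|A|^2)\geq 0$ against the Cohn-Vossen/Gauss--Bonnet \emph{upper} bound coming from the planar end (which the $m<0$ asymptotics make strictly negative), rather than the other way around as written.
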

Witten gave an alternative proof of Theorem $\ref{pmt}$ for spin manifolds \cite{witten}.

\begin{thm}[Riemannian Penrose inequality, Theorem 19 of \cite{bray_RPI}] 
Let $(M,g)$ be a complete asymptotically flat Riemannian 3-manifold with compact smooth 
boundary $\partial M$ and nonnegative scalar curvature, with ADM mass $m$.  
Assume that $\partial M$ is minimal (i.e. has zero mean curvature), and let $S$ be the
outermost minimal area enclosure of $\partial M$.
Then $m \geq \sqrt{\frac{A}{16\pi}}$, where $A$ is the area of $S$.
Equality holds if and only if $(M,g)$ is isometric to the 
Schwarzschild metric of mass $m$ outside of $S$.
\label{rpi}
\end{thm}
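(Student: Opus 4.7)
My plan is to follow the conformal flow approach: deform $g$ through a one-parameter family of asymptotically flat metrics $g_t = u_t^4 g$, $t \geq 0$, with $g_0 = g$, along which (a) the area of the outermost minimal area enclosure $\Sigma(t)$ of $\partial M$, measured in $g_t$, is constant in $t$, and (b) the ADM mass $m(t)$ of $g_t$ is nonincreasing. If the flow converges as $t \to \infty$, after suitable identification of the exterior region, to a spatial Schwarzschild slice of mass $m_\infty$ outside its horizon, then $m_\infty = \sqrt{A/(16\pi)}$ by the explicit form of Schwarzschild, so (a) and (b) together yield $m = m(0) \geq m_\infty = \sqrt{A/(16\pi)}$.

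To define the flow, prescribe $\partial_t u_t = v_t$, where $v_t$ is the harmonic function (with respect to $g_t$) on the region exterior to $\Sigma(t)$ satisfying $v_t = 0$ on $\Sigma(t)$ and $v_t \to -e^{-t}$ at infinity, and extended to be constant inside $\Sigma(t)$. Because $u_t$ is harmonic in the relevant conformal sense outside $\Sigma(t)$, the metric $g_t$ retains nonnegative scalar curvature there (inherited from $g$), and $\Sigma(t)$ remains minimal in $g_t$ since $u_t$ vanishes on it, as the standard conformal formula for mean curvature shows. Both the conservation of area and the monotonicity of mass then reduce to divergence theorem computations on $\Sigma(t)$ and at infinity, using the conformal identities for area and ADM mass under $g_t = u_t^4 g$; each exploits the harmonicity and prescribed boundary values of $v_t$, together with the maximum principle to sign-control boundary fluxes.

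The hard part will be the long-time analysis: one must show the flow exists for all $t \geq 0$ in spite of potential jump discontinuities of $\Sigma(t)$ (when its components merge or when the enclosure suddenly grows to a strictly larger minimal area enclosure), and that the rescaled metrics converge, in an appropriate topology on a suitable exterior region, to a Schwarzschild slice. Establishing this requires uniform estimates for the $u_t$, compactness, and an asymptotic analysis showing that the exterior geometry becomes rotationally symmetric and scalar flat in the limit; I expect jump handling to be the most delicate point. Once the flow is under control, the rigidity case follows by tracing equality in the mass monotonicity formula, which forces $v_t$ along the flow to have trivial form and in turn forces $g$ itself to coincide with a Schwarzschild metric outside $S$. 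An alternative I would consider is the Huisken--Ilmanen weak inverse mean curvature flow, monotonizing the Hawking mass along IMCF; but that method naturally handles only a single connected component of $S$, so the conformal flow is the right tool for the general multi-component statement above.
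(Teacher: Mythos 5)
This statement is not proved in the paper at all: it is imported verbatim as Theorem 19 of \cite{bray_RPI}, so the only meaningful comparison is with Bray's original conformal-flow proof, which is indeed the strategy you are sketching. Your outline gets the global architecture right (a conformal flow $g_t=u_t^4 g$ with $v_t=\partial_t u_t$ harmonic outside the current outermost minimal area enclosure, vanishing on it, and tending to $-e^{-t}$ at infinity; area conserved, mass nonincreasing, convergence to Schwarzschild), but two of your supporting claims are genuinely wrong or missing the key idea.

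First, you assert that $\Sigma(t)$ stays minimal ``since $u_t$ vanishes on it.'' It does not and cannot: $u_t$ is the conformal factor and must stay positive, or $g_t$ degenerates on $\Sigma(t)$ (producing exactly the zero area singularities this paper studies). What vanishes on $\Sigma(t)$ is $v_t=\partial_t u_t$; this gives $\frac{d}{dt}\int_{\Sigma(t)}u_t^4\,dA_{g_0}=0$ to first order, and the actual constancy of $A(t)$ requires a separate, delicate analysis of how the outermost minimal area enclosure varies and jumps. (Also, in Bray's construction $v_t$ is harmonic with respect to the \emph{original} metric $g_0$, not $g_t$; this is what makes $u_t$ superharmonic and preserves $R\geq 0$.) Second, and more seriously, the monotonicity $m'(t)\leq 0$ does \emph{not} ``reduce to divergence theorem computations.'' It is the heart of the proof and requires the Positive Mass Theorem, applied to a manifold obtained by reflecting the exterior region through $\Sigma(t)$ and gluing two copies along the horizon (after a harmonically-flat approximation); the boundary flux at infinity that controls $m'(t)$ is sign-controlled only by this global input, not by the maximum principle alone. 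Without naming that step you have omitted the one idea that makes the inequality true rather than merely plausible. Your closing remark about Huisken--Ilmanen is accurate and is consistent with the paper's own comment following the theorem.
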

 
See appendix A for details on the outermost minimal area enclosure.
Theorem $\ref{rpi}$ was first proved by Huisken and Ilmanen \cite{imcf} with $A$ replaced by 
the area of the largest connected component of $S$.

\subsection{Negative mass in the literature}
The concept of negative mass in both classical physics and general relativity has appeared frequently 
in the literature.  The following list is
a small sample of such articles and is by no means comprehensive.
\begin{itemize}
\item Bondi discusses negative mass in Newtonian mechanics, distinguishing inertial mass, passive gravitational mass, 
and active gravitational mass \cite{bondi}.  He then proceeds to study a two-body problem in general relativity involving 
bodies with masses of opposite sign.
\item Bonnor considers Newtonian mechanics and general relativity under the assumption that all mass is negative \cite{bonnor}.
Included in the discussion are 1) the motion of test particles for a Schwarzschild spacetime of negative mass, 2)
Friedmann-Robertson-Walker cosmology with negative mass density, and 3) charged particles of negative mass.
\item More recently, research has turned toward the question of stability of 
the negative mass Schwarzschild spacetime. Gibbons, Hartnoll, and Ishibashi studied linear gravitational perturbations 
to this metric and found it to be stable for a certain boundary condition on the perturbations \cite{gibbons}.
However, a separate analysis by Gleiser and Dotti reached a different conclusion, indicating the negative mass Schwarzschild 
spacetime to be perturbatively unstable for all boundary conditions \cite{gleiser}.  The papers are mathematically
consistent with each other, with differences arising from subtleties pertaining to defining time evolution in a spacetime 
with a naked singularity.  The issue of stability warrants further analysis, although we do not consider it here.
\end{itemize}

The present paper offers a new perspective on singularities arising from negative mass, extending 
past the Schwarzschild case.
We will restrict our attention to the case of time-symmetric (i.e., totally geodesic), spacelike 
slices of spacetimes.  This setting is a natural starting point, as it was for the positive mass 
theorem and Penrose inequality.

\subsection{Overview of contents}
Before providing an overview of the paper, we emphasize that the statements and proofs of the main theorems
appear near the end, in section \ref{sec_main}.

In section \ref{sec_def} we make precise the notion of zero area singularity.  By
necessity, this is preceded by a discussion of convergence for sequences of surfaces.  Next, we define
two well-behaved classes of ZAS: those that are ``regular'' and ``harmonically regular.''

Section \ref{sec_mass} introduces the mass of a ZAS: this is a numerical quantity that ultimately gives
a lower bound on the ADM mass in the main theorem.  Defining the mass for regular ZAS is straightforward; 
for arbitrary ZAS, formulating a definition requires more care.  We discuss connections between the ZAS mass
and the Hawking mass.  Next, we define the capacity of a ZAS based on the classical notion of harmonic 
capacity.  The important connection between mass and capacity is that if the capacity is positive, then 
the mass is $-\infty$.

Spherically symmetric metrics with zero area singularities are studied in section \ref{sec_examples}.  In this simple setting,
we explicitly compute the mass and capacity.  An example is given that shows the concepts of regular and
harmonically regular ZAS are distinct.  Experts may prefer to skip this section, which is largely computational
and detail-oriented.

The main two theorems, comprising two versions of the Riemannian ZAS inequality, are stated and proved (up to an unproven conjecture)
in section \ref{sec_main}.  An immediate corollary is a version of the positive mass theorem for manifolds with certain
types of singularities.  

After providing one final example, we conclude with a discussion about several related open problems
and conjectures.  Two appendices follow which are referred to as needed.

\subsection{Comments and acknowledgements}
In 1997, the first author, just out of graduate school, sat next to Barry
Mazur at a conference dinner at Harvard, who, quite characteristically, asked
the first author a series of probing questions about his research, which at
the time concerned black holes.  One of the questions was ``Can a black hole
have negative mass, and if so, what properties would it have?'' Contemplating
this natural question marked the beginning of an enjoyable journey leading to
this paper.

The first author initiated this work, originally presented at a conference in 2005 under
the heading ``Negative Point Mass Singularities'' \cite{bray_npms}.  The second author commenced
work on this project as a graduate student, and wrote his thesis on a closely related topic 
\cite{jauregui}.  He would like to thank Mark Stern and Jeffrey Streets for 
helpful discussions.

\section{Definitions and preliminaries}
\label{sec_def}
In this paper, the singularities in question will arise as metric singularities on a boundary 
component of a manifold.  
To study the behavior of the metric near a singularity, we make extensive use of the idea of nearby
surfaces converging to a boundary component.

Throughout this paper $(M,g)$ will be a smooth, asymptotically 
flat Riemannian 3-manifold, with compact, smooth, nonempty boundary $\partial M$ (see appendix A 
for details on asymptotic flatness).  We do not assume that $g$ extends smoothly to $\partial M$.
We make no other restrictions on the topology of $\partial M$ (e.g., 
connectedness, orientability, genus).  

\subsection{Convergence of surfaces}
\label{sec_surfaces}
For our purposes, a \emph{surface} $S$ in $M$ will always mean a $C^\infty$, closed, embedded 2-manifold 
in the interior of $M$ that is the boundary of a bounded open region $\Omega$.  (Note that $\Omega$ is 
uniquely determined by $S$.)  We say that a surface $S_1 = \partial \Omega_1$ 
\emph{encloses} a surface $S_2= \partial \Omega_2$ if $\Omega_1 \supset \Omega_2$.  
If $\Sigma$ is a nonempty subcollection of the components of $\partial M$, we say that a surface $S = 
\partial \Omega$ \emph{encloses} $\Sigma$ if $S$ is homologous to $\Sigma$.

We next define what it means for a surface $S$ to be ``close to'' $\Sigma$ (with $\Sigma$ as above).
Let $U \subset M$ be a neighborhood
of $\Sigma$ that is diffeomorphic to $\Sigma \times [0, a)$ for some $a > 0$.  This gives a  
coordinate system $(x,s)$ on $U$ where $x \in \Sigma$ and $s \in [0,a)$.  If $S \subset U$ is a surface
that can be parameterized in these coordinates as $s=s(x)$, then we say it is a ``graph over $\Sigma$''; clearly
such $S$ encloses $\Sigma$. 
Whenever we discuss the convergence of surfaces, it will be implicit that the surfaces are graphs 
over $\Sigma$.

\begin{definition}
Let $\{S_n\}$ be a sequence of surfaces that are graphs over $\Sigma$ that can be parameterized as $s_n=s_n(x)$ (see above).
We say that $\{S_n\}$ \textbf{converges} in $C^k$ to $\Sigma$ if the functions $s_n:\Sigma\to [0,a)$ 
converge to 0 in $C^k$.
\end{definition}
We emphasize that convergence in $C^k$ depends only on the underlying smooth structure of $M$ and not on the metric.
As an example, $S_n \to \Sigma$ in $C^0$ if for any open set $U$ containing $\Sigma$, there exists $n_0>0$ such
that $S_n \subset U$ for all $n \geq n_0$.  We shall not deal with convergence stronger than $C^2$ and will 
explain the significance of convergence in $C^1$ and $C^2$ as necessary.

\subsection{Zero area singularities}
We now give the definition of zero area singularity.  Both the singular and plural will be 
abbreviated ``ZAS.''
\begin{definition}
Let $g$ be an asymptotically flat metric on $M \sm \partial M$.  A connected component 
$\Sigma^0$ of $\partial M$ is a \textbf{zero area singularity (ZAS)} of $g$ if for every
sequence of surfaces $\{S_n\}$ converging in $C^1$ to $\Sigma^0$, the areas of $S_n$
measured with respect to $g$ converge to zero.
\label{def_zas}
\end{definition}
Topologically, a ZAS is a boundary surface in $M$, not a point.  However, in terms
of the metric, it is often convenient to think of a ZAS as a point formed by shrinking the 
metric to zero.  For example, the boundary sphere of the Schwarzschild ZAS metric is a ZAS.  
Also, most notions of ``point singularity'' are ZAS (after deleting the point). A depiction of 
a manifold with ZAS is given in figure \ref{fig_ex_zas}.

ZAS could be defined for manifolds that are not asymptotically flat, but we do not pursue this direction.

\begin{figure}[ht]
\caption{A manifold with zero area singularities}
\begin{center}
\includegraphics[scale=0.5]{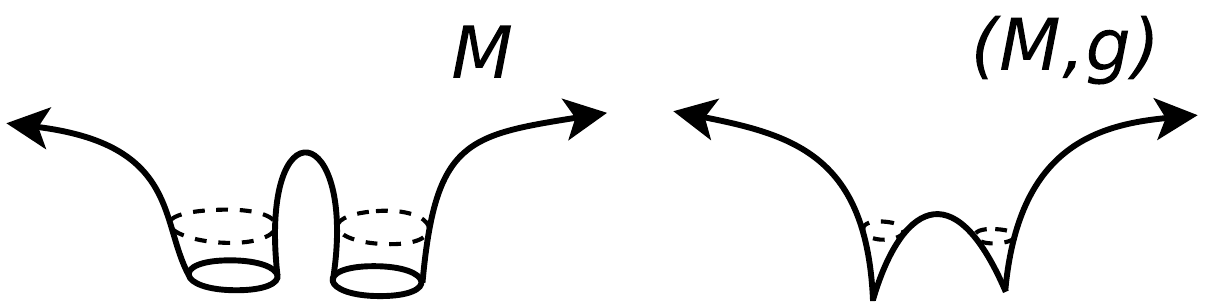}
\end{center}
\flushleft\footnotesize{On the left is a drawing of an abstract smooth manifold $M$ with two boundary components. On the right is
a drawing of the same manifold equipped with a metric $g$ for which both boundary components are ZAS.  The dotted lines represent
cross-sectional surfaces (not necessarily 2-spheres).}
\label{fig_ex_zas}
\end{figure}

In the case that $g$ extends continuously to the boundary, we have several equivalent conditions for ZAS:
\begin{prop}
Suppose $\Sigma^0$ is a component of $\partial M$ to which $g$ extends continuously as a 
symmetric 2-tensor.  The following are equivalent:
\begin{compactenum}
\item $\Sigma^0$ is a ZAS of $g$.
\item $\Sigma^0$ has zero area measured with respect to $g$ (see below).
\item For each point $x \in \Sigma^0$, $g$ has a null eigenvector tangent to 
$\Sigma^0$ at $x$.
\item There exists a sequence of surfaces $\{\Sigma_i\}$ converging in $C^1$ to $\Sigma^0$ 
such that $|\Sigma_i|_g$ converges to zero.
\end{compactenum}
Here, $|\Sigma_i|_{g}$ is the area of $\Sigma_i$ measured with respect to $g$.
\label{prop_zas_criteria}
\end{prop}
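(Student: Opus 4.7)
The plan is to reduce everything to a single continuity lemma: if $g$ extends continuously to $\Sigma^0$ and $\{S_n\}$ is any sequence of surfaces converging in $C^1$ to $\Sigma^0$, then $|S_n|_g \to |\Sigma^0|_g$. Once this is established, conditions (1) and (3) each immediately force $|\Sigma^0|_g = 0$ and hence $|S_n|_g \to 0$ for \emph{every} $C^1$-convergent sequence, giving the ZAS property. Condition (2) reduces to (1) by a pointwise linear algebra observation.

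To prove the continuity lemma, I would work in the tubular coordinates $(x,s) \in \Sigma^0 \times [0,a)$ introduced just before the definition of convergence. Writing $S_n$ as the graph $\{s = s_n(x)\}$, the induced metric on $S_n$, pulled back to $\Sigma^0$ via this parameterization, has components
\begin{equation*}
h^{(n)}_{ij}(x) = \bigl(g_{ij} + g_{is}\partial_j s_n + g_{js}\partial_i s_n + g_{ss}\,\partial_i s_n\,\partial_j s_n\bigr)\bigl|_{(x,s_n(x))}.
\end{equation*}
Since $s_n \to 0$ and $\nabla s_n \to 0$ uniformly on the compact set $\Sigma^0$ ($C^1$ convergence), and since $g$ is continuous up to $s=0$, the components $h^{(n)}_{ij}(x)$ converge uniformly to $g_{ij}(x,0) = h^{(0)}_{ij}(x)$, the induced metric on $\Sigma^0$. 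Taking $\sqrt{\det}$ and integrating over the compact surface $\Sigma^0$ then gives $|S_n|_g \to |\Sigma^0|_g$ by dominated (or uniform) convergence.

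With the lemma in hand, the three implications are short. For (1), $|\Sigma^0|_g = 0$ combined with the lemma yields $|S_n|_g \to 0$ for every $C^1$-convergent sequence, which is exactly the ZAS condition. For (3), the hypothesized sequence together with the lemma forces $|\Sigma^0|_g = 0$, reducing to case (1). For (2), at each $x \in \Sigma^0$ a nonzero tangent vector $v \in T_x \Sigma^0$ with $g(v,\cdot) = 0$ is a null direction of the pulled-back symmetric 2-tensor $h^{(0)}$ at $x$, so $\det h^{(0)}(x) = 0$ pointwise, whence $|\Sigma^0|_g = \int_{\Sigma^0}\sqrt{\det h^{(0)}}\,dx = 0$, again reducing to (1). (Here I am using that the continuous extension of a Riemannian metric is positive semidefinite, so "null eigenvector" and "degenerate direction" coincide.)

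The main obstacle is really just the continuity lemma; once the coordinate bookkeeping is set up, the rest is a matter of applying uniform convergence to a continuous tensor field. A minor subtlety is verifying that $C^1$ (and not merely $C^0$) convergence is genuinely needed: without control on $\nabla s_n$, the cross terms $g_{is}\partial_j s_n$ and $g_{ss}\partial_i s_n\partial_j s_n$ cannot be discarded, and one could imagine oscillating graphs whose induced areas do not converge to $|\Sigma^0|_g$. The $C^1$ hypothesis in the definition of convergence is precisely what rules this out.
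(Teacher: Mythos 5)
Your proposal is correct and follows essentially the same route as the paper: establish that $|S_n|_g \to |\Sigma^0|_g$ for any $C^1$-convergent sequence, deduce parts (1) and (3) immediately, and reduce part (2) to part (1) by noting that a null tangent direction forces $\det k_{ij}=0$ for the restriction $k$ of $g$ to $T\Sigma^0$. The only difference is that the paper asserts the area-convergence step without proof, whereas you supply the explicit graph computation justifying it (and correctly identify why $C^1$ rather than $C^0$ convergence is needed).
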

A continuous, symmetric 2-tensor $k$ on 
a surface that is positive semi-definite can 
be used to compute areas by integrating 
the 2-form defined locally in coordinates by $\sqrt{\det k_{ij}} \,dx_1 \wedge dx_2$.  

\begin{proof} The proof is an immediate consequence of the following observations:
\begin{itemize}
 \item If $g$ extends continuously to $\Sigma^0$, then for any sequence of surfaces 
$\{\Sigma_n\}$ converging in $C^1$ to $\Sigma^0$, the areas converge: $|\Sigma_n|_g \to |\Sigma^0|_g$.
\item If $k$ is the restriction of $g$ to the tangent bundle of $\Sigma^0$, 
then $\det k_{ij}=0$ at $p$ if and only if $k$ has an eigenvector with zero eigenvalue at $p$.
\end{itemize}
\end{proof}

In general, it is not necessary that $g$ extend continuously to the boundary in the definition of ZAS.

\subsection{Resolutions of regular singularities}
We now discuss what it means to ``resolve'' a zero area singularity.  An important case of ZAS
occurs when a smooth metric on $M$ is deformed by a conformal factor that vanishes on the boundary \cite{bray_npms}:
\begin{definition}
Let $\Sigma^0$ be a ZAS of $g$. Then $\Sigma^0$ is \textbf{regular} if there exists 
a smooth, nonnegative function $\ol \varphi$ and a smooth metric $\ol g$, both defined on a neighborhood 
$U$of $\Sigma^0$, such that
\begin{compactenum}
\item $\ol \varphi$ vanishes precisely on $\Sigma^0$,
\item $\ol \nu(\ol \varphi) > 0$ on $\Sigma^0$, where $\ol \nu$ is the unit normal to $\Sigma^0$ (taken with 
respect to $\ol g$ and pointing into the manifold), and
\item $g = \ol \varphi^4 \ol g$ on $U\sm \Sigma^0$.
\end{compactenum}
If such a pair $(\ol g, \ol \varphi)$ exists, it is called a \textbf{local resolution} of $\Sigma^0$.  
\label{def_local_resolution}
\end{definition}
The significance of the condition $\ol \nu(\ol \varphi) > 0$ is explained further in Lemma \ref{lemma_smooth_extension} and
is crucial in the proof of Proposition \ref{prop_reg_mass_well_def}.  As an example, the Schwarzschild ZAS with $m < 0$ is a regular ZAS 
with a local resolution $(\ol g, \ol \varphi)$, where $\ol g$ is the flat metric and $\ol \varphi = \left(1 + \frac{m}{2r}\right)$.  
A graphical depiction of a local resolution is given in figure \ref{fig_resolution}.
\begin{figure}[ht]
\caption{A resolution of a regular ZAS}
\begin{center}
\includegraphics[scale=0.5]{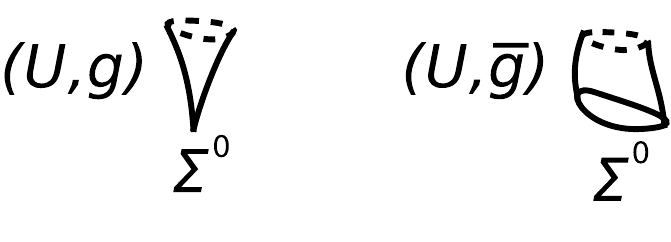}
\end{center}
\flushleft\footnotesize{On the left is a neighborhood $U$ of a 
regular ZAS $\Sigma^0$ in the metric $g$.
On the right is the same neighborhood $U$ equipped with a resolution metric $\ol g$.
The metrics $g$ and $\ol g$ are conformal in $U\sm \Sigma^0$, with $g = \ol \varphi^4 \ol g$, where $\ol 
\varphi$ vanishes on $\Sigma^0$.}
\label{fig_resolution}
\end{figure}

Much of our work utilizes a nicer class of singularities: those for which the resolution function can be chosen
to be harmonic.
\begin{definition}
A regular ZAS $\Sigma^0$ of $g$ is said to be \textbf{harmonically regular} if there exists a local resolution $(\ol g, \ol \varphi)$
such that $\ol \varphi$ is harmonic with respect to $\ol g$.  Such a pair $(\ol g, \ol \varphi)$ is called a \textbf{local harmonic resolution}.
\end{definition}

In the case of a local harmonic resolution, the 
condition $\ol \nu (\ol \varphi) > 0$ holds automatically by the maximum principle.  
We remark that if one local resolution (or local harmonic resolution) exists, then so do infinitely many.

The Schwarzschild ZAS is harmonically regular, since the function $\left(1+\frac{m}{2r}\right)$ is harmonic with respect
to the flat metric on $\R^3$.  In section \ref{sec_examples} we give examples of ZAS that 
are not regular and ZAS that are regular but not harmonically regular.

If several components of $\partial M$ are (harmonically) regular ZAS, then there is a natural
notion of a local (harmonic) resolution of the union $\Sigma$ of these components: in Definition \ref{def_local_resolution},
simply replace $\Sigma^0$ with $\Sigma$.

Since our ultimate goal---the Riemannian ZAS inequality---is a global geometric statement, we require resolutions 
that are globally defined.  
\begin{definition}
Suppose all components of $\Sigma = \partial M$ are harmonically regular ZAS.
Then the pair $(\ol g, \ol \varphi)$ is a \textbf{global harmonic resolution} of $\Sigma$ if
\begin{compactenum}
\item $\ol g$ is a smooth, asymptotically flat metric on $M$,
\item $\ol \varphi$ is the $\ol g$- harmonic function on $M$ vanishing on $\Sigma$ and tending to 
one at infinity, and
\item $g = \ol \varphi^4 \ol g$ on $M \sm \Sigma$.
\end{compactenum}
\label{def_global_harmonic_resolution}
\end{definition}
For example, the aforementioned resolution of the Schwarzschild ZAS is a global harmonic 
resolution.  In general, if $\Sigma$ consists of harmonically regular ZAS, it is not clear that a 
global harmonic resolution exists; however, this is known to be true:
\begin{prop}[Theorem 58 of \cite{jauregui}]
\label{prop_ghr_exists}
If $\Sigma=\partial M$ is a collection of harmonically regular ZAS in $(M,g)$, then $\Sigma$ admits a global
harmonic resolution.
\end{prop}

\section{Mass and capacity of ZAS}
\label{sec_mass}
In a time-symmetric (i.e., totally geodesic) spacelike slice of a spacetime, we adopt the viewpoint that
black holes may be identified with apparent horizons.  An
apparent horizon is defined to be a connected component of the outermost 
minimal surface in the spacelike slice.
If $A$ is the area of an apparent horizon $S$, then its mass (or ``black hole mass'') is defined to 
be $m_{BH}(S)=\sqrt{\frac{A}{16\pi}}$.  This definition has physical 
\cite{penrose} and 
mathematical \cites{imcf,bray_RPI} motivation; it also
equals $m$ for the apparent horizon in the Schwarzschild metric of mass $m > 0$.  We note the 
black hole mass is also given by the limit of the Hawking masses of a sequence of surfaces converging in 
$C^2$ to the apparent horizon.  
Recall the Hawking mass of any surface $S$ in $(M,g)$ is given by
\begin{equation*}
m_H(S) := \sqrt{\frac{|S|_g}{16\pi}}\left(1 - \frac{1}{16\pi} \int_S H^2 dA\right),
\end{equation*}
where $|S|_{g}$ is the area of $S$ with respect to $g$, $H$ is the mean curvature of $S$, 
and $dA$ is the area form on $S$ induced by $g$.  The significance of $C^2$ convergence is explained
in the proof of Proposition \ref{prop_H43} below.

Defining the mass of a ZAS, on the other hand, is not as straightforward, since the metric
becomes degenerate and potentially loses some regularity at the boundary. For regular ZAS, it is 
possible to define mass in terms of a local resolution in such a way as to not 
depend on the choice of local resolution.  In the general case, defining the mass is more 
involved.  We first consider the regular case.

\subsection{The mass of regular ZAS}
\label{sec_reg_mass}
Following \cite{bray_npms}, we define the mass of a regular ZAS:
\begin{definition}
Let $(\ol g, \ol \varphi)$ be a local resolution of a ZAS $\,\Sigma^0$ of $g$.  
Then the \textbf{regular mass} of $\Sigma^0$ is defined by the integral
\begin{equation}
m_{\text{\emph{reg}}}\left(\Sigma^0\right) = -\frac{1}{4}\left(\frac{1}{\pi}\int_{\Sigma^0} 
\ol\nu(\ol 
\varphi)^{4/3} d\ol A\right)^{3/2},
\label{def_reg_mass}
\end{equation}
where $\ol \nu$ is the unit normal to $\Sigma^0$ (pointing into the manifold) and $d\ol A$ is the 
area form induced by $\ol g$.
\end{definition}
The advantages of this definition are that it
\begin{compactenum}
\item is independent of the choice of local resolution (Proposition \ref{prop_reg_mass_well_def}),
\item depends only on the local geometry of $(M,g)$ near $\Sigma^0$ (Proposition \ref{prop_H43}),
\item is related to the Hawking masses of nearby surfaces (Proposition \ref{reg_mass_thm}),
\item arises naturally in the proof of the Riemannian ZAS inequality (Theorem \ref{thm_zas_ineq}), and
\item equals $m$ for the Schwarzschild ZAS metric of ADM mass $m < 0$ (left to the reader).
\end{compactenum}
Before elaborating on these issues, we
take a moment to describe how the masses of regular ZAS add together.  If $\Sigma^1, \ldots, \Sigma^k$ are regular ZAS and
$\Sigma = \cup_{i=1}^k \Sigma^i$, then applying $(\ref{def_reg_mass})$ to $\Sigma$ gives
\begin{equation}
m_{\text{reg}}\left(\Sigma\right) = - \left(\sum_{i=1}^k m_{\text{reg}}\left(\Sigma^i\right)^{2/3}\right)^{3/2}.
\label{eq_add_masses}
\end{equation}
This is analogous with the case of black holes: if $\Sigma^1, \ldots, \Sigma^k$ 
are apparent horizons with black hole masses $m_i = \sqrt{\frac{|\Sigma^i|}{16\pi}}$,
and if the black hole mass of their union $\Sigma$ is defined to be 
$\sqrt{\frac{|\Sigma|}{16\pi}}$ 
(c.f. \cite{penrose}), then
\begin{equation*}
m_{BH}\left(\Sigma\right) = \left(\sum_{i=1}^k m_i^2\right)^{1/2}.
\end{equation*}
Next, we show the regular mass is well-defined. (This was also proved in \cites{bray_npms,robbins}.)  
First, we require the following lemma:
\begin{lemma}
If $(\ol g_1, \ol \varphi_1)$ and $(\ol g_2, \ol \varphi_2)$ are two local resolutions of a regular ZAS $\Sigma^0$,
then the ratio $\frac{\ol \varphi_2}{\ol \varphi_1}$ extends smoothly to $\Sigma^0$ as a strictly positive function.
\label{lemma_smooth_extension}
\end{lemma}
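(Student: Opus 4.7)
The plan is to study $u := \ol\varphi_2/\ol\varphi_1$, initially defined and smooth on $U\setminus\Sigma^0$, and show that it extends smoothly and strictly positively across $\Sigma^0$. Since $\ol\varphi_i > 0$ on $U\setminus\Sigma^0$, the function $u$ is smooth and positive where it is defined, so the only issue is behavior at $\Sigma^0$, where both numerator and denominator vanish.

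First I would combine the two resolution identities $g = \ol\varphi_1^4 \ol g_1 = \ol\varphi_2^4 \ol g_2$, valid on $U\setminus \Sigma^0$, to obtain the conformal relation
\begin{equation*}
\ol g_1 = u^4 \ol g_2 \quad \text{on } U\setminus \Sigma^0.
\end{equation*}
Then, working in an arbitrary local coordinate chart containing $\Sigma^0$, I would take determinants of the matrix identity $(\ol g_1)_{ij} = u^4 (\ol g_2)_{ij}$ to get
\begin{equation*}
u^{12} = \frac{\det(\ol g_1)_{ij}}{\det(\ol g_2)_{ij}} \quad \text{on } U\setminus \Sigma^0.
\end{equation*}
The right-hand side, however, is the ratio of two smooth, strictly positive functions on all of $U$ (since $\ol g_1$ and $\ol g_2$ are smooth Riemannian metrics throughout $U$), so it defines a smooth, strictly positive extension of $u^{12}$ to $\Sigma^0$. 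Since $x\mapsto x^{1/12}$ is smooth on $(0,\infty)$, the function $u$ itself extends smoothly and strictly positively across $\Sigma^0$, as required.

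There is essentially no main obstacle here: the argument is a direct consequence of the conformal relation and the nondegeneracy of $\ol g_1, \ol g_2$ on $U$. The only minor check is that the coordinate expression $\det(\ol g_1)_{ij}/\det(\ol g_2)_{ij}$ is independent of the chart, which is clear since it equals the square of the ratio of volume forms $d\mathrm{vol}_{\ol g_1}/d\mathrm{vol}_{\ol g_2}$. Notably, the positivity of the normal derivatives $\ol \nu_i(\ol \varphi_i)$ is not explicitly invoked in this argument; it enters only implicitly, through the requirement that $\ol \varphi_1$ and $\ol \varphi_2$ vanish to the same (first) order along $\Sigma^0$, which is automatically forced by the consistency of the two conformal expressions for $g$. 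An alternative route, which makes the role of the normal derivative transparent, would be to invoke Hadamard's lemma to write $\ol \varphi_i = s\, h_i$ near $\Sigma^0 = \{s=0\}$ with $h_i$ smooth and $h_i|_{\Sigma^0} > 0$, so that $u = h_2/h_1$ extends smoothly; but the determinant argument is shorter and coordinate-free.
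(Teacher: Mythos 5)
Your proof is correct, but it takes a genuinely different route from the paper's. The paper's argument is the Hadamard-lemma one you mention only as an alternative at the end: since each $\ol\varphi_i$ vanishes on $\Sigma^0$ with strictly positive normal derivative, the Fundamental Theorem of Calculus gives $\ol\varphi_i = s\,h_i$ in collar coordinates with $h_i$ smooth and positive near $\Sigma^0$, whence the ratio equals $h_2/h_1$. Your determinant argument instead extracts $u^{12} = \det(\ol g_1)_{ij}/\det(\ol g_2)_{ij}$ from the conformal relation $\ol g_1 = u^4 \ol g_2$ and concludes by taking the positive twelfth root. Both are complete. The trade-off: the paper's proof uses the defining first-order-vanishing condition directly and is the more elementary computation, while yours is coordinate-free and, as you observe, does not invoke the normal-derivative hypothesis at all --- it needs only that $\ol g_1$ and $\ol g_2$ are smooth nondegenerate metrics up to and including $\Sigma^0$. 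That makes your version marginally more general (it would survive weakenings of condition (2) in Definition \ref{def_local_resolution}), and it cleanly exposes why the two conformal factors are forced to vanish to the same order. Your exponent is right in dimension three ($\det(u^4 g) = u^{12}\det g$), and passing from the positive extension of $u^{12}$ to that of $u$ is legitimate since $u>0$ off $\Sigma^0$ and $x \mapsto x^{1/12}$ is smooth on $(0,\infty)$.
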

\begin{proof}
Note that $\ol \varphi_1$ and $\ol \varphi_2$ vanish on $\Sigma^0$ and have nonzero normal derivative there.  The
proof follows from considering Taylor series expansions for coordinate expressions of $\ol \varphi_1$ 
and $\ol \varphi_2$ near $\Sigma^0$.
\end{proof}

\begin{prop}
The definition of $m_{\text{reg}}(\Sigma^0)$ is independent of the choice of local resolution.
\label{prop_reg_mass_well_def}
\end{prop}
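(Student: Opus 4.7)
The plan is to fix two local resolutions $(\ol g_1, \ol \varphi_1)$ and $(\ol g_2, \ol \varphi_2)$ of $\Sigma^0$ and show that the integrand $\ol \nu(\ol \varphi)^{4/3}\, d\ol A$ in the definition of $m_{\text{reg}}(\Sigma^0)$ is pointwise identical on $\Sigma^0$ for the two resolutions. Once that is established, the two flux integrals, and hence the two values of $m_{\text{reg}}$, agree.

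First, I would set $u = \ol \varphi_2 / \ol \varphi_1$. By Lemma \ref{lemma_smooth_extension}, $u$ extends smoothly and positively up to $\Sigma^0$, so $\ol \varphi_2 = u \ol \varphi_1$ near $\Sigma^0$. Since both pairs give the same metric,
\begin{equation*}
\ol \varphi_1^{4}\, \ol g_1 \;=\; g \;=\; \ol \varphi_2^{4}\, \ol g_2 \;=\; u^{4}\, \ol \varphi_1^{4}\, \ol g_2
\end{equation*}
on $U \sm \Sigma^0$, and dividing by $\ol \varphi_1^{4}$ (which is positive off $\Sigma^0$) yields $\ol g_1 = u^{4}\, \ol g_2$, an identity which extends to $\Sigma^0$ by continuity of both smooth metrics.

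Next, I would compute how the two ingredients of the integrand transform under this smooth conformal change of background metric. Restricted to $\Sigma^0$, the induced 2-metrics satisfy the same scaling, so the area forms obey $d\ol A_1 = u^{4}\, d\ol A_2$. For the unit inward normals, the relation $\ol g_1 = u^{4}\, \ol g_2$ gives $\ol \nu_1 = u^{-2}\, \ol \nu_2$. Using $\ol \varphi_1 = u^{-1} \ol \varphi_2$, the product rule, and the crucial fact that $\ol \varphi_2 = 0$ on $\Sigma^0$, the term involving derivatives of $u^{-1}$ drops out, leaving
\begin{equation*}
\ol \nu_1(\ol \varphi_1)\Big|_{\Sigma^0} \;=\; u^{-2}\, \ol \nu_2\!\left(u^{-1} \ol \varphi_2\right)\Big|_{\Sigma^0} \;=\; u^{-3}\, \ol \nu_2(\ol \varphi_2)\Big|_{\Sigma^0}.
\end{equation*}
Raising to the $4/3$ power and combining with the area form transformation gives
\begin{equation*}
\ol \nu_1(\ol \varphi_1)^{4/3}\, d\ol A_1 \;=\; u^{-4}\, \ol \nu_2(\ol \varphi_2)^{4/3} \cdot u^{4}\, d\ol A_2 \;=\; \ol \nu_2(\ol \varphi_2)^{4/3}\, d\ol A_2
\end{equation*}
on $\Sigma^0$, which is exactly what we need.

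The main technical point, and essentially the only subtle one, is confirming that $u$ and $u^{-1}$ truly are smooth up to $\Sigma^0$ so that the product rule calculation is legitimate; this is where Lemma \ref{lemma_smooth_extension} (and hence the nonvanishing normal derivative condition in Definition \ref{def_local_resolution}) is indispensable. The exponent $4/3$ in the definition of $m_{\text{reg}}$ is then seen to be precisely the value that makes the area-form factor $u^4$ and the normal-derivative factor $u^{-3 \cdot 4/3} = u^{-4}$ cancel, explaining in retrospect why this is the natural integrand to consider.
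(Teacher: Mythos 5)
Your proposal is correct and follows essentially the same argument as the paper: both use Lemma \ref{lemma_smooth_extension} to show the ratio of resolution functions is smooth and positive, deduce that the two background metrics differ by the fourth power of that ratio, and then observe that the resulting factors from the area form and the normal derivative cancel exactly because $\ol\varphi$ vanishes on $\Sigma^0$ (killing the product-rule term) and because of the exponent $4/3$. The only difference is cosmetic: you work with $u = \ol\varphi_2/\ol\varphi_1$ and verify the integrands agree pointwise, while the paper uses the reciprocal ratio and manipulates the integrals directly.
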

\begin{proof}
Let $(\ol g_1, \ol \varphi_1)$ and $(\ol g_2, \ol \varphi_2)$ be two local resolutions of $\Sigma^0$, defined on a neighborhood $U$
of $\Sigma^0$.  Then on $U \sm \Sigma^0$,
\begin{equation*}
\ol \varphi_1^4 \, \ol g_1 = g = \ol \varphi_2^4 \,\ol g_2.
\end{equation*}
By Lemma \ref{lemma_smooth_extension}, $\lambda:= \frac{\ol \varphi_1}{\ol \varphi_2}$ is smooth and positive on $U$.
In particular, $\ol g_2 = \lambda^4 \ol g_1$ on $U$.  This allows us to compare area elements $ d\ol A_i$ and unit normals $\ol \nu_i$ on $\Sigma^0$ 
in the metrics $\ol g_1$ and $\ol g_2$:
\begin{align*}
d\ol A_2 &= \lambda^4 d\ol A_1,\\
\ol \nu_2 &= \lambda^{-2}\ol \nu_1.
\end{align*}
We show the integrals in $(\ref{def_reg_mass})$ are the same whether computed for $(\ol g_1, \ol \varphi_1)$ or $(\ol g_2, \ol \varphi_2)$.
\begin{align*}
\int_{\Sigma^0} \ol\nu_2(\ol \varphi_2)^{4/3} d\ol A_2 &= \int_{\Sigma^0} \left(\lambda^{-2} 
		\ol\nu_1\left(\frac{\ol \varphi_1}{\lambda}\right)\right)^{4/3} \lambda^4 d\ol A_1\\
	&= \int_{\Sigma^0} \left(\lambda^{-2}\left( \frac{\ol\nu_1 (\ol \varphi_1)}{\lambda} - 
		\cancel{\ol \varphi_1 \frac{\ol\nu_1(\lambda)}{\lambda^2}}\right)\right)^{4/3} \lambda^4 d\ol A_1\\
	&= \int_{\Sigma^0} \ol \nu_1(\ol \varphi_1)^{4/3} d\ol A_1,
\end{align*}
where the cancellation occurs because $\ol \varphi_1$ vanishes on $\Sigma^0$.
\end{proof}
The next few results serve to: give alternate characterizations
of the regular mass, relate the regular mass to the Hawking mass, and provide motivation for 
the definition of the mass of an arbitrary ZAS.
We have placed an emphasis on 
the Hawking mass (versus other quasi-local mass functionals) due to its relevance to the Riemannian 
Penrose inequality \cite{imcf} and its role in the proof of the Riemannian ZAS inequality
for the case of a single ZAS \cite{robbins}.  

\begin{prop}
Let $\Sigma$ be a subset of $\partial M$ consisting of regular ZAS of $g$.
If $\{\Sigma_n\}$ is a sequence of surfaces converging in $C^2$ to $\Sigma$, then
\begin{equation*}
m_{\text{\emph{reg}}}(\Sigma) = -\lim_{n \to \infty} \left(\frac{1}{16\pi} \int_{\Sigma_n} H^{4/3} dA 
\right)^{3/2}.
\end{equation*}
In particular, the right side is independent of the choice of sequence, and the left side depends 
only on the local geometry of $(M,g)$ near $\Sigma$.
\label{prop_H43}
\end{prop}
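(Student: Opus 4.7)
My plan is to work entirely through a local resolution $(\ol g, \ol \varphi)$ of $\Sigma$, defined on a neighborhood $U$ of $\Sigma$, and to re-express the integrand $H^{4/3}\, dA$ on $\Sigma_n$ in terms of $\ol g$-data. For $n$ sufficiently large, $\Sigma_n \subset U$ by $C^0$-convergence. Since $g = \ol \varphi^4 \ol g$ on $U \sm \Sigma$, the standard conformal change formulas in dimension 3 give, on each $\Sigma_n$,
\[
H = \ol \varphi^{-2} \ol H_n + 4\ol \varphi^{-3} \ol \nu_n(\ol \varphi), \qquad dA = \ol \varphi^4 \, d\ol A_n,
\]
where $\ol H_n$ is the $\ol g$-mean curvature of $\Sigma_n$, $\ol\nu_n$ is the $\ol g$-unit normal (oriented consistently with $\ol \nu$ on $\Sigma$), and $d\ol A_n$ is the induced $\ol g$-area element on $\Sigma_n$.

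The key algebraic point is that the two terms in $H$ have different orders of blow-up as $\ol \varphi \to 0$, and when we raise to the $4/3$ power and multiply by $\ol\varphi^4$, the negative powers of $\ol\varphi$ cancel cleanly. Factoring out the dominant term,
\[
H^{4/3}\, dA = 4^{4/3}\, \ol \nu_n(\ol \varphi)^{4/3}\left(1 + \frac{\ol \varphi\, \ol H_n}{4\, \ol \nu_n(\ol \varphi)}\right)^{4/3} d\ol A_n.
\]
Now I would invoke the $C^2$-convergence hypothesis. Writing $\Sigma_n$ as graphs $s_n$ over $\Sigma$ with $s_n \to 0$ in $C^2$, one obtains uniform convergence on $\Sigma_n$ of the second fundamental form (so $\ol H_n$ remains uniformly bounded), of the unit normal $\ol\nu_n$, and of the area element $d\ol A_n$. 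Since $\ol \nu(\ol \varphi) > 0$ on $\Sigma$ by condition 2 of Definition \ref{def_local_resolution}, $\ol \nu_n(\ol \varphi)$ is bounded below by a positive constant for $n$ large. Combined with $\ol \varphi|_{\Sigma_n} \to 0$ uniformly, this makes the bracketed correction factor tend to $1$ uniformly, and we may pass to the limit under the integral to obtain
\[
\int_{\Sigma_n} H^{4/3}\, dA \longrightarrow 4^{4/3} \int_\Sigma \ol \nu(\ol \varphi)^{4/3}\, d\ol A.
\]
Finally, plugging into the right-hand side of the claimed identity, the constants combine as $(4^{4/3}/16)^{3/2} = 16/64 = 1/4$, which matches exactly the definition $(\ref{def_reg_mass})$ of $m_{\text{reg}}(\Sigma)$.

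The main technical point is the use of $C^2$-convergence: the mean curvature depends on second derivatives of the graph function $s_n$, so $C^1$-convergence alone would not give boundedness of $\ol H_n$, and the correction factor could fail to tend to $1$ uniformly. Once uniform convergence of all the relevant geometric quantities is in hand, the passage to the limit is standard. The last two assertions of the proposition then come for free: the limit equals the fixed number $m_{\text{reg}}(\Sigma)$, which by Proposition \ref{prop_reg_mass_well_def} is independent of the chosen resolution, whereas the right-hand side is manifestly built from $g$-data on $\Sigma_n$ alone and so depends only on the local geometry of $(M,g)$ near $\Sigma$.
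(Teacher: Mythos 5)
Your proposal is correct and follows essentially the same route as the paper: apply the conformal change formulas for $H$ and $dA$, observe that the powers of $\ol\varphi$ cancel, and use $C^2$-convergence (boundedness of $\ol H_n$) together with $\ol\varphi\to 0$ to pass to the limit, landing on the defining integral for $m_{\text{reg}}(\Sigma)$. The only cosmetic difference is that you factor out $4^{4/3}\ol\nu_n(\ol\varphi)^{4/3}$ before taking the limit, while the paper keeps the expression as $\big(\ol\varphi\,\ol H + 4\ol\nu(\ol\varphi)\big)^{4/3}$; the constants check out either way.
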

\begin{proof}
Let $(\ol g, \ol \varphi)$ be some local resolution of $\Sigma$.  Apply formula 
$(\ref{eq_conf_mean_curv})$ in
appendix B for the change in mean curvature of a hypersurface under a conformal change of the 
ambient metric.  Below, $H$ and $\ol H$ are the mean curvatures of $\Sigma_n$
in the metrics $g$ and $\ol g$, respectively.
\begin{align*}
-\left(\frac{1}{16\pi} \int_{\Sigma_n} H^{4/3} dA \right)^{3/2}
        &= -\left(\frac{1}{16\pi}
           \int_{\Sigma_n} \Big(\ol \varphi^{\,-2} \ol H + 4\ol \varphi^{\,-3} \ol \nu(\ol \varphi)\Big)^{4/3} 
	   \ol \varphi^{\,4} d\ol A \right)^{3/2}\\
        &= -\left(\frac{1}{16\pi}
           \int_{\Sigma_n} \Big(\ol \varphi \ol H + 4\ol \nu(\ol \varphi)\Big)^{4/3} d\ol A \right)^{3/2}.
\end{align*}
Now, take $\lim_{n \to \infty}$ of both sides, and use the facts that $\ol \varphi$ vanishes on $\Sigma$ and the $C^2$ convergence
of $\{\Sigma_n\}$ ensures that the mean curvature of $\Sigma_n$ in $\ol g$ is uniformly bounded as $n \to \infty$ to deduce:
\begin{align*}
 -\lim_{n \to \infty} \left(\frac{1}{16\pi} \int_{\Sigma_n} H^{4/3} dA \right)^{3/2}
        &= -\frac{1}{4}\left(\frac{1}{\pi}
           \int_{\Sigma} (\ol \nu(\ol \varphi))^{4/3} d\ol A \right)^{3/2}\\
	&=m_{\text{reg}}(\Sigma).
\end{align*}
\end{proof}
A similar result is now given for the Hawking mass; the proof also appears in \cite{robbins}.
\begin{prop}
Let $\Sigma$ be a subset of $\partial M$ consisting of regular ZAS of $g$.
If $\{\Sigma_n\}$ is a sequence of surfaces converging in $C^2$ to $\Sigma$, then
\begin{equation}
\limsup_{n \to \infty} m_H(\Sigma_n) \leq m_{\text{\emph{reg}}}(\Sigma).
\label{eq_hawking_reg}
\end{equation}
Moreover, there exists a sequence of surfaces $\{\Sigma_n^*\}$ converging in $C^2$ to $\Sigma$ such that
\begin{equation*}
\lim_{n \to \infty} m_H(\Sigma_n^*) = m_{\text{\emph{reg}}}(\Sigma).
\end{equation*}
\label{reg_mass_thm}
\end{prop}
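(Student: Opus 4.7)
The plan is to prove the inequality by applying Hölder's inequality to the integrand in the Hawking mass on each $\Sigma_n$, and then to construct the equality sequence as level sets of a resolution function, after using the gauge freedom to make $\ol\nu(\ol\varphi)$ constant on $\Sigma$.

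For the inequality, apply Hölder with exponents $3/2$ and $3$ to any smooth closed surface $S$:
\[
\int_S H^{4/3}\, dA \le \left(\int_S H^2\, dA\right)^{2/3} |S|_g^{1/3}.
\]
Raising to the $3/2$ power and dividing by $(16\pi)^{3/2}$ is equivalent to
\[
\left(\frac{1}{16\pi}\int_S H^{4/3}\, dA\right)^{3/2} \le \sqrt{\frac{|S|_g}{16\pi}}\cdot \frac{1}{16\pi} \int_S H^2\, dA,
\]
so substituting into the Hawking mass formula yields
\[
m_H(S) \le \sqrt{\frac{|S|_g}{16\pi}} - \left(\frac{1}{16\pi}\int_S H^{4/3}\, dA\right)^{3/2}.
\]
Applied at $S = \Sigma_n$, the ZAS condition forces $|\Sigma_n|_g \to 0$, while Proposition~\ref{prop_H43} identifies the limit of the second term as $-m_{\text{reg}}(\Sigma)$. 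Taking $\limsup$ proves $(\ref{eq_hawking_reg})$.

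For the equality sequence, I would first exploit the gauge freedom in local resolutions established in the proof of Proposition~\ref{prop_reg_mass_well_def}: replacing $(\ol g, \ol\varphi)$ with $(\lambda^4 \ol g, \ol\varphi/\lambda)$ for a positive smooth function $\lambda$ defined near $\Sigma$ produces another local resolution, and rescales $\ol\nu(\ol\varphi)$ on $\Sigma$ by $\lambda^{-3}$. Choosing $\lambda$ so that $\lambda^3 \ol\nu(\ol\varphi) \equiv c_i$ on each component $\Sigma^i$ of $\Sigma$, I may assume $\ol\nu(\ol\varphi)$ is a positive constant $c_i$ on each component. Then set $\Sigma_n^* := \{\ol\varphi = \epsilon_n\}$ for a decreasing sequence $\epsilon_n \downarrow 0$; since $\ol\nabla \ol\varphi$ is nonvanishing in a neighborhood of $\Sigma$, these are smooth surfaces converging in $C^\infty$ to $\Sigma$.

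On $\Sigma_n^*$, the $\ol g$-unit normal is $\ol\nabla\ol\varphi/|\ol\nabla\ol\varphi|_{\ol g}$, and $|\ol\nabla\ol\varphi|_{\ol g}$ converges uniformly to $c_i$ on the portion of $\Sigma_n^*$ near $\Sigma^i$. The conformal change formula $(\ref{eq_conf_mean_curv})$ then gives $H = 4 c_i \epsilon_n^{-3}(1 + o(1))$ uniformly on this portion, so $H$ is asymptotically constant; this is exactly the equality regime of the Hölder inequality above. Direct substitution of $|\Sigma_n^*|_g = \epsilon_n^4 |\Sigma_n^*|_{\ol g}$ and the asymptotic expression for $\int H^2\, dA$ into the Hawking mass formula then yields $m_H(\Sigma_n^*) \to m_{\text{reg}}(\Sigma)$. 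The main obstacle I anticipate is the bookkeeping in this final computation: one must verify that the sub-leading contributions from $\ol H$ and from the variation of $|\ol\nabla\ol\varphi|_{\ol g}$ over $\Sigma_n^*$ do not affect the limit of $\sqrt{|\Sigma_n^*|_g}\int H^2\, dA$, which ultimately reduces to the smoothness of $\ol g$ and the positivity of the $c_i$.
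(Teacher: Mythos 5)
Your proof of inequality $(\ref{eq_hawking_reg})$ is exactly the paper's argument (H\"older with exponents $3/2$ and $3$ applied to $H^{4/3}$, plus $|\Sigma_n|_g \to 0$ from the ZAS property and Proposition \ref{prop_H43}), and your equality sequence is built the same way as the paper's: renormalize the resolution so the normal derivative of the resolution function is constant on $\Sigma$, then take level sets of that function. However, there is one genuine gap in your normalization step. You only arrange $\ol\nu(\ol\varphi)\equiv c_i$ on each component $\Sigma^i$, with the constants $c_i$ allowed to differ across components. Since the Hawking mass and the H\"older inequality are taken over the \emph{whole} surface $\Sigma_n^* $ (a disjoint union of pieces, one near each $\Sigma^i$), asymptotic constancy of $H$ on each piece separately is not enough: if $c_i \neq c_j$ then $H \sim 4c_i\epsilon_n^{-3}$ near $\Sigma^i$ and $\sim 4c_j\epsilon_n^{-3}$ near $\Sigma^j$, the function $H$ is \emph{not} asymptotically constant on $\Sigma_n^*$, and H\"older over the union remains strict in the limit. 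Concretely, your final computation gives
\begin{equation*}
\lim_{n\to\infty} m_H(\Sigma_n^*) = -\frac{1}{4\pi^{3/2}}\,|\Sigma|_{\ol g}^{1/2}\int_\Sigma c^2\, d\ol A,
\end{equation*}
whereas $m_{\text{reg}}(\Sigma) = -\frac{1}{4\pi^{3/2}}\bigl(\int_\Sigma c^{4/3}\, d\ol A\bigr)^{3/2}$, and these agree only when $c$ is constant on all of $\Sigma$; otherwise the limit is strictly smaller and equality is not attained.

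The fix is small and is precisely what the paper does: choose the gauge function (the paper takes $u$ to be the positive $\ol g$-harmonic function with boundary value $(\ol\nu(\ol\varphi))^{1/3}$ on $\Sigma$, but any smooth positive choice works for a local resolution) so that the new resolution satisfies $\tilde\nu(\tilde\varphi)\equiv 1$ on \emph{all} of $\Sigma$ simultaneously. With a single constant, your level-set construction and the concluding computation go through as written. Aside from this point, your anticipated "bookkeeping" concerns are handled exactly as you expect: the $\ol\varphi^{-2}\ol H$ term in $(\ref{eq_conf_mean_curv})$ is of lower order than $4\ol\varphi^{-3}\tilde\nu(\tilde\varphi)$ because $\ol H$ stays bounded under $C^2$ convergence of the level sets.
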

\begin{proof}
The first part is an application of H\"{o}lder's inequality:
\begin{align*}
m_H(\Sigma_n) &= \sqrt{\frac{|\Sigma_n|_g}{16\pi}} - \frac{|\Sigma_n|_g^{1/2}}{(16\pi)^{3/2}} 
		 \int_{\Sigma_n} H^2 dA &&\text{(definition of Hawking mass)}\\
	      &\leq \sqrt{\frac{|\Sigma_n|_g}{16\pi}} - 
	\left(\frac{1}{16\pi}\int_{\Sigma_n} H^{4/3}dA\right)^{3/2} &&\text{(H\"{o}lder's inequality)}
\end{align*}
Inequality $(\ref{eq_hawking_reg})$  follows by taking $\limsup_{n \to \infty}$ and applying Definition \ref{def_zas} and Proposition
\ref{prop_H43}.

We now construct the sequence $\{\Sigma_n^*\}$. We first argue that there exists a local resolution $(\tilde g, \tilde \varphi)$
such that $\tilde \nu (\tilde \varphi) \equiv 1$ on $\Sigma$.
Let $(\ol g, \ol \varphi)$ be some local resolution defined in a neighborhood $U$ of $\Sigma$.
Let $u$ be a positive $\ol g$-harmonic function with Dirichlet boundary condition given by $\left(\ol \nu(\ol \varphi)\right)^{1/3}$ 
on $\Sigma$.  Set $\tilde g = u^4 \ol g$ and $\tilde \varphi = \frac{\ol \varphi}{u}$; we claim $(\tilde g, \tilde \varphi)$ is the desired local 
resolution.

First, note that $\tilde g$ is a smooth metric on $U$, since $u$ is positive and smooth.
Next, $\tilde\varphi^4 \tilde g = \ol \varphi^4 \ol g= g$, and $\tilde \varphi$ vanishes only on $\Sigma$.  
Now, we compute the normal derivative of $\tilde \varphi$ on $\Sigma$:
\begin{equation*}
\tilde \nu (\tilde \varphi) = u^{-2} \ol \nu \left(\frac{\ol \varphi}{u}\right) = u^{-3} \ol \nu(\ol \varphi) - \cancel{\ol \varphi \frac{\ol 
\nu(u)}{u^4}} = 1,
\end{equation*}
by the boundary condition imposed on $u$.  Thus, $(\tilde g, \tilde \varphi)$ is the desired local 
resolution.  (We remark that if $(\ol g, \ol \varphi)$ is a local harmonic resolution, then so is $(\tilde g, \tilde \varphi)$.
This follows from equation $(\ref{eq_conf_laplacian})$ in appendix B and will be used in the proof of Proposition $\ref{prop_masses_agree}$.)

Define $\Sigma_n^*$ to be the $1/n$ level set of $\tilde \varphi$, which is smooth and well-defined
for all $n$ sufficiently large.
It is clear that $\Sigma_n^*$
converges to $\Sigma$ in all $C^k$ as $n \to \infty$.

Since equality is attained in H\"{o}lder's inequality for constant functions, the proof is complete
if we show the ratio of the minimum and maximum values of $H$ (the mean curvature of $\Sigma_n^*$, measured in $g$) 
tends to 1 as $n \to \infty$.
From equation $(\ref{eq_conf_mean_curv})$ in appendix B, $H$ is given by
\begin{equation}
H = \tilde \varphi^{\,-2} \tilde H + 4 \tilde \varphi^{\,-3} \tilde \nu(\tilde \varphi),
\label{eq_H}
\end{equation}
where $\tilde H$ is the mean curvature of $\Sigma_n$ in $\tilde g$.  By $C^2$ convergence, $\tilde H$ is bounded as $n \to \infty$.  By
$C^1$ convergence, $\tilde \nu(\tilde \varphi)$ converges to 1 as $n \to \infty$. In particular, the second 
term in $(\ref{eq_H})$ dominates.  Since $\tilde \varphi$ is by definition constant on $\Sigma_n^*$,
we have proved the claim.
\end{proof}

The following corollary of Proposition \ref{reg_mass_thm}
will be pertinent when discussing the mass of arbitrary ZAS.
\begin{cor}
If $\Sigma$ is a subset of $\partial M$ consisting of regular ZAS of $g$, then
\begin{equation*}
m_{\text{\emph{reg}}}(\Sigma) = \sup_{\{\Sigma_n\}} \limsup_{n \to \infty} m_H(\Sigma_n),
\end{equation*}
where the supremum is taken over all sequences $\{\Sigma_n\}$ converging in $C^2$ to $\Sigma$.
\label{cor_reg_mass}
\end{cor}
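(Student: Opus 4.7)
The corollary is a direct packaging of the two halves of Proposition \ref{reg_mass_thm}, so the plan is simply to observe that each half controls one inequality between the supremum and $m_{\text{reg}}(\Sigma)$.

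First I would establish the upper bound. Given any sequence $\{\Sigma_n\}$ of surfaces converging in $C^2$ to $\Sigma$, the first part of Proposition \ref{reg_mass_thm} gives $\limsup_{n\to\infty} m_H(\Sigma_n) \leq m_{\text{reg}}(\Sigma)$. Taking the supremum over all such sequences preserves this inequality, so
\begin{equation*}
\sup_{\{\Sigma_n\}} \limsup_{n\to\infty} m_H(\Sigma_n) \leq m_{\text{reg}}(\Sigma).
\end{equation*}

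Next I would establish the reverse inequality by exhibiting a sequence that achieves the supremum. The second part of Proposition \ref{reg_mass_thm} produces a specific sequence $\{\Sigma_n^*\}$ converging in $C^2$ to $\Sigma$ with $\lim_{n\to\infty} m_H(\Sigma_n^*) = m_{\text{reg}}(\Sigma)$. Since this particular sequence is one element of the family over which the supremum is taken, we obtain
\begin{equation*}
\sup_{\{\Sigma_n\}} \limsup_{n\to\infty} m_H(\Sigma_n) \geq \limsup_{n\to\infty} m_H(\Sigma_n^*) = m_{\text{reg}}(\Sigma).
\end{equation*}

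Combining the two inequalities yields the claimed equality. There is no real obstacle here: Proposition \ref{reg_mass_thm} has already done all of the work, and the corollary is essentially the statement that the limsup-bound of the first half is sharp, witnessed by the sequence constructed in the second half. The only conceptual point worth flagging is that the supremum is genuinely attained (as a limit, not just a limsup) along $\{\Sigma_n^*\}$, which is why no subtler extraction argument is needed.
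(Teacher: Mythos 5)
Your proposal is correct and is exactly the argument the paper intends: the corollary is left unproved in the text precisely because it follows by combining the upper bound from the first half of Proposition \ref{reg_mass_thm} (valid for every $C^2$-convergent sequence, hence for the supremum) with the extremal sequence $\{\Sigma_n^*\}$ from the second half. No further comment is needed.
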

It is necessary to take the supremum, since $\limsup_{n \to \infty} m_H(\Sigma_n)$ evidently
underestimates the regular mass in general.

To summarize, we have seen several expressions for the regular mass as the:
\begin{compactenum}
\item explicit formula (\ref{def_reg_mass}) in terms of any local resolution, 
\item limit of $-\left(\frac{1}{16\pi} \int_{\Sigma_n} H^{4/3} dA \right)^{3/2}$,
\item limit of the Hawking masses of a certain sequence of surfaces, and 
\item $\sup$ of the $\limsup$ of the Hawking masses of sequences converging to $\Sigma$.
\end{compactenum}

\subsection{The mass of arbitrary ZAS}	
\label{sec_mass_zas}
For simplicity, we assume from this point on that all components of $\Sigma := \partial 
M$ are ZAS of $g$.  We shall define only the mass of $\Sigma$ (not the mass of each 
component of $\Sigma$).
A good definition of the mass of $\Sigma$ should depend only on the local geometry near $\Sigma$
and agree with the regular mass in the case that the components of $\Sigma$ are regular (or at 
least harmonically regular).  There are three immediate candidates for the definition of the mass of 
$\Sigma$:  the supremum over all sequences 
$\{\Sigma_n\}$ converging in some $C^k$ to $\Sigma$ of
\begin{compactenum}
\item $\limsup_{n \to \infty} m_H(\Sigma_n)$ (inspired by Corollary \ref{cor_reg_mass}),
\item $\limsup_{n \to \infty} - \left(\frac{1}{16\pi}\int_{\Sigma_n} H^{4/3} dA\right)^{3/2}$ 
(inspired by Proposition \ref{prop_H43}), and
\item $\limsup_{n \to \infty} m_{\text{reg}}(\Sigma_n)$, where $\Sigma_n$ is viewed as a regular ZAS
that ``approximates'' $\Sigma$. (This is explained below; see also figure \ref{fig_approx}.)
\end{compactenum}
The first two candidates manifestly depend only on the local geometry near $\Sigma$ and agree with 
the regular 
mass for regular ZAS (by Corollary \ref{cor_reg_mass} and Proposition \ref{prop_H43}).  In fact, the 
second is greater than or equal to the first (an application of H\"{o}lder's inequality).

To explain the third quantity above, we show that each surface $\Sigma_n$ is naturally a collection of ZAS
(with respect to a new metric).  Let $\Omega_n$ be the region enclosed by $\Sigma_n$, and let
$\varphi_n$ be the unique $g$-harmonic 
function that 
vanishes on $\Sigma_n$
and tends to one at infinity.  Then $\varphi_n^4 g$ is an asymptotically flat metric on the manifold $M \sm \Omega_n$.
Moreover, $\Sigma_n$ is a collection of harmonically regular ZAS for this manifold.
This construction is demonstrated in figure \ref{fig_approx}; essentially this process approximates any ZAS $\Sigma$ with
a sequence of harmonically regular ZAS.  By construction, $(g,\varphi_n)$ is a global harmonic resolution of $\Sigma_n$, so the regular mass of 
$\Sigma_n$ is computed in this resolution as 
\begin{equation}
m_{\text{reg}}(\Sigma_n) 
	= -\frac{1}{4}\left(\frac{1}{\pi}\int_{\Sigma_n} \nu(\varphi_n)^{4/3} dA\right)^{3/2},
\label{m_reg_sigma_n}
\end{equation}
where the unit normal $\nu$ and area form $dA$ are taken with respect to $g$.  

\begin{figure}[ht]
\caption{Approximating a ZAS by harmonically regular ZAS}
\begin{center}
\includegraphics[scale=0.6]{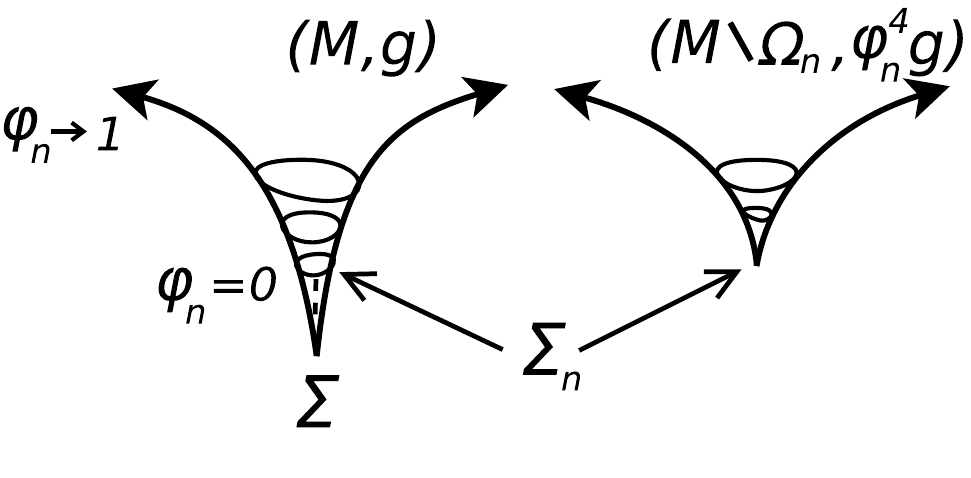}
\end{center}
\flushleft\footnotesize{On the left side, $(M,g)$ is pictured with a ZAS $\Sigma$ and a sequence $\{\Sigma_n\}$ 
of surfaces converging to it.  $\varphi_n$ is the $g$-harmonic function vanishing on $\Sigma_n$
and approaching 1 at infinity.  The conformal metric $\varphi_n^4 g$ on the region $M \sm \Omega_n$
has $\Sigma_n$ as a ZAS. (Here, $\Omega_n$ is the region enclosed by $\Sigma_n$.)}
\label{fig_approx}
\end{figure}

We adopt the third candidate for our definition of 
mass; Proposition \ref{prop_masses_agree} and Corollary \ref{cor_masses_agree}
show this definition agrees with the regular mass for harmonically regular ZAS, and
Proposition \ref{prop_local_mass_cap} shows it depends only on the local geometry near 
$\Sigma$.  The relationship between the third and first two candidates for the definition of mass 
is unknown; see section \ref{sec_alt_mass}.  Another justification for our choice of the definition 
of mass is that it naturally gives a lower bound on the ADM mass of $(M,g)$ (see Theorem 
\ref{thm_zas_ineq_full}).  It is currently unknown how to obtain such a lower bound in terms of the 
first two candidates for the mass of $\Sigma$.

\begin{definition} Let $\Sigma = \partial M$ be zero area singularities of $(M,g)$.
The \textbf{mass} of $\Sigma$ is
\begin{equation*}
m_{\text{ZAS}}(\Sigma) := \sup_{\{\Sigma_n\}} \left(\limsup_{n \to \infty} m_{\text{\emph{reg}}}(\Sigma_n)\right),
\end{equation*}
where the supremum is taken over all sequences $\{\Sigma_n\}$ converging in $C^1$ to $\Sigma$ and 
$m_{\text{reg}}(\Sigma_n)$ is given by equation $(\ref{m_reg_sigma_n})$.  
\label{def_zas_mass}
\end{definition}
Note that while the regular mass of a regular ZAS is a negative real number, 
$m_{\text{ZAS}}(\Sigma)$ takes values in $[-\infty,0]$.  In section \ref{sec_examples} we provide examples for
which $m_{\text{ZAS}}(\Sigma)=-\infty$ and $m_{\text{ZAS}}(\Sigma)=0$.  The requirement that the 
sequences $\{\Sigma_n\}$ converge in $C^1$ is explained in the proof of the following result.

\begin{prop}
If $\Sigma$ admits a global harmonic resolution, then the two definitions of mass agree.  
That is, $m_{\text{\emph{reg}}}(\Sigma) = m_{\text{ZAS}}(\Sigma)$.
\label{prop_masses_agree}
\end{prop}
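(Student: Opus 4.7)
The plan is to exploit the fact that, under the conformal change $g = \ol\varphi^{\,4}\ol g$, the auxiliary harmonic functions $\varphi_n$ used to define $m_{\text{reg}}(\Sigma_n)$ transform nicely. Specifically, the three-dimensional conformal Laplacian identity (equation $(\ref{eq_conf_laplacian})$) combined with $\ol g$-harmonicity of $\ol\varphi$ yields $\Delta_g u = \ol\varphi^{\,-5}\Delta_{\ol g}(\ol\varphi\, u)$, so $\varphi_n$ is $g$-harmonic if and only if $v_n := \ol\varphi\,\varphi_n$ is $\ol g$-harmonic. The boundary conditions $\varphi_n|_{\Sigma_n} = 0$ and $\varphi_n \to 1$ at infinity translate to $v_n|_{\Sigma_n} = 0$ and $v_n \to 1$ at infinity, so $v_n$ is the unique $\ol g$-harmonic function on $M\setminus \Omega_n$ with these conditions.

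First I would rewrite $m_{\text{reg}}(\Sigma_n)$ as an integral over $\Sigma_n$ measured in the smooth background metric $\ol g$. Using the conformal identities $\nu = \ol\varphi^{\,-2}\ol\nu$ and $dA = \ol\varphi^{\,4}\, d\ol A$, and exploiting that $v_n = 0$ on $\Sigma_n$ so the product rule $\ol\nu(v_n/\ol\varphi) = \bigl(\ol\nu(v_n)\ol\varphi - v_n\ol\nu(\ol\varphi)\bigr)/\ol\varphi^{\,2}$ collapses to $\ol\nu(v_n)/\ol\varphi$, all powers of $\ol\varphi$ cancel to give
\[
m_{\text{reg}}(\Sigma_n) = -\frac{1}{4}\left(\frac{1}{\pi}\int_{\Sigma_n} \ol\nu(v_n)^{4/3}\, d\ol A\right)^{3/2}.
\]

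Next I would show that $v_n \to \ol\varphi$ in $C^1$ on a fixed neighborhood of $\Sigma$ as $\Sigma_n \to \Sigma$ in $C^1$. Since $\ol g$ is smooth up to $\Sigma$ (the essential use of the global harmonic resolution hypothesis), the Dirichlet problem for $v_n$ on $M\setminus \Omega_n$ converges to the Dirichlet problem defining $\ol\varphi$ on $M\setminus \Sigma$, and boundary Schauder estimates applied on compatibly parametrized collars yield uniform $C^1$-control of $v_n - \ol\varphi$ up to the moving boundary. Parametrizing $\Sigma_n$ as a $C^1$-graph over $\Sigma$ and pulling back, one obtains uniform convergence $\ol\nu(v_n)|_{\Sigma_n} \to \ol\nu(\ol\varphi)|_\Sigma$, and passing to the limit in the integral above gives $m_{\text{reg}}(\Sigma_n) \to m_{\text{reg}}(\Sigma)$.

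Since this convergence holds for \emph{every} sequence $\Sigma_n \to \Sigma$ in $C^1$, taking the supremum in Definition \ref{def_zas_mass} immediately gives $m_{\text{ZAS}}(\Sigma) = m_{\text{reg}}(\Sigma)$. The main obstacle I anticipate is the boundary-regularity step: ensuring uniform convergence of the normal derivatives on the moving boundary $\Sigma_n$ requires both the $C^1$ convergence of the surfaces and the smoothness of $\ol g$ up to $\Sigma$ — which together clarify why the $C^1$ hypothesis appears in Definition \ref{def_zas_mass} in the first place.
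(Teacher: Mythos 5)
Your first step is identical to the paper's: using equation $(\ref{eq_conf_laplacian})$ and the $\ol g$-harmonicity of $\ol \varphi$ to see that $v_n := \ol\varphi\,\varphi_n$ is the $\ol g$-harmonic function vanishing on $\Sigma_n$ and tending to $1$ at infinity, and then cancelling all powers of $\ol\varphi$ to write $m_{\text{reg}}(\Sigma_n)$ as $-\frac{1}{4}\bigl(\frac{1}{\pi}\int_{\Sigma_n}\ol\nu(v_n)^{4/3}\,d\ol A\bigr)^{3/2}$; this matches $(\ref{eqn_reg_mass_limit})$ exactly. Where you diverge is the limit step, and that is where the gap lies. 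You claim that for an \emph{arbitrary} sequence $\Sigma_n\to\Sigma$ in $C^1$, ``boundary Schauder estimates on compatibly parametrized collars'' give uniform $C^1$-control of $v_n-\ol\varphi$ up to the moving boundary, hence uniform convergence $\ol\nu(v_n)|_{\Sigma_n}\to\ol\nu(\ol\varphi)|_{\Sigma}$. Schauder estimates up to the boundary require uniform H\"older control of the boundary at one order above the estimate sought (uniform $C^{1,\alpha}$ or $C^{2,\alpha}$ bounds on the graph functions $s_n$), and $C^1$ convergence supplies no such bound: each $\Sigma_n$ is smooth, but the second fundamental forms may blow up as $n\to\infty$, so the constants in your estimates are not uniform in $n$. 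A linearized computation (perturbing the unit sphere in flat space by $\epsilon Y_\ell$ shifts the normal derivative of the capacitary potential by roughly $\epsilon\ell Y_\ell$, i.e.\ by the $C^1$ norm of the graph) suggests your conclusion is actually \emph{true} and explains why $C^1$ is the right hypothesis in Definition $\ref{def_zas_mass}$, but the tool you cite does not deliver it; you would need a barrier or comparison argument in the spirit of the sandwich $(\ref{capacity_ineq})$ used in Proposition $\ref{prop_local_mass_cap}$.

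The paper sidesteps this entirely by making a special choice of sequence: it takes $\Sigma_n$ to be level sets $\{\ol\varphi=a_n\}$. Then $\ol\varphi-a_n$ is $\ol g$-harmonic, vanishes on $\Sigma_n$, and equals $1-a_n$ at infinity, so by uniqueness of harmonic functions with given boundary data one gets the exact closed form $v_n=\frac{\ol\varphi-a_n}{1-a_n}$ in the exterior region, hence $\ol\nu(v_n)=\frac{1}{1-a_n}\ol\nu(\ol\varphi)$ on $\Sigma_n$ with no elliptic estimates at all; the limit is then immediate. The trade-off is that this exhibits only one sequence achieving $m_{\text{reg}}(\Sigma)$, which by itself gives $m_{\text{ZAS}}(\Sigma)\geq m_{\text{reg}}(\Sigma)$; the reverse inequality is the content of Proposition $\ref{prop_reg_finite_mass}$. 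Your route, if the boundary-regularity step were repaired, would be stronger in that it handles every sequence at once and yields both inequalities simultaneously; as written, though, the key analytic claim is unsupported. A minimal fix for your write-up is to specialize to level sets of $\ol\varphi$ for the lower bound and then supply a separate argument (or cite the comparison-function technique of Proposition $\ref{prop_local_mass_cap}$) for the upper bound.
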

\begin{proof}
Let $\Sigma$ have a global harmonic resolution $(\ol g, \ol \varphi)$. 
Let $\{\Sigma_n\}_{n=1}^\infty$ be a collection of smooth level sets
of $\ol \varphi$ that converge in $C^1$ to $\Sigma$, and let $\varphi_n$ be $g$-harmonic, vanishing on $\Sigma_n$ and tending to 1 at infinity.
Now we compute a convenient expression for the regular mass of $\Sigma_n$:
\begin{align}
m_{\text{reg}}(\Sigma_n) &= -\frac{1}{4} \left(\frac{1}{\pi} \int_{\Sigma_n} \nu(\varphi_n)^{4/3} dA\right)^{3/2}
		&&\text{(expression $(\ref{m_reg_sigma_n})$  for $m_{\text{reg}}(\Sigma_n)$)}\nonumber\\
	&= -\frac{1}{4} \left(\frac{1}{\pi} \int_{\Sigma_n} \big(\ol \varphi\, \ol \nu(\varphi_n)\big)^{4/3} d\ol A\right)^{3/2}
				&&\text{($\nu = \ol \varphi^{-2}\ol \nu$ and $dA = \ol \varphi^4 d\ol A$)}\nonumber\\
	&= -\frac{1}{4} \left(\frac{1}{\pi} \int_{\Sigma_n} \big(\ol \nu(\ol \varphi \varphi_n)\big)^{4/3} d\ol A\right)^{3/2}
		&&\text{($ \varphi_n=0$ on $\Sigma_n$).} \label{eqn_reg_mass_limit}
\end{align}
We claim that the limit $n\to \infty$ of the above equals $m_{\text{reg}}(\Sigma)$.  Let $a_n$ be
the (constant) value of $\ol \varphi$ on $\Sigma_n$. Let $E_n$ be the closure of the region exterior to $\Sigma_n$.
From formula $(\ref{eq_conf_laplacian})$ in appendix B, the function $\ol \varphi \varphi_n$ is 
$\ol g$-harmonic in $E_n$, zero on $\Sigma_n$ and 1 at infinity.
Also, $\ol \varphi - a_n$ is $\ol g$-harmonic on $M$, zero on $\Sigma_n$ and $1-a_n$ at infinity.
In particular, by the uniqueness of harmonic functions with identical boundary values, we see that
\begin{equation*}
\ol \varphi \varphi_n = \frac{1}{1-a_n}\left(\ol \varphi -a_n\right) \quad \text{ in } E_n.
\end{equation*} 
It follows that
\begin{equation}
\ol \nu (\ol \varphi \varphi_n) = \frac{1}{1-a_n}\ol \nu (\ol \varphi) \quad \text{ on } \Sigma_n.
\label{eq_m_reg_m}
\end{equation}
Continuing with equations (\ref{eqn_reg_mass_limit}), and taking $\lim_{n \to \infty}$, we have
\begin{align*}
\lim_{n \to \infty} m_{\text{reg}}(\Sigma_n) &= \lim_{n \to \infty}-\frac{1}{4} \left(\frac{1}{\pi} \int_{\Sigma_n} \left(\frac{1}{1-a_n}\ol \nu (\ol \varphi)\right)^{4/3} d\ol A\right)^{3/2} &&\text{(eqn. (\ref{eq_m_reg_m}))}\\
&= -\frac{1}{4} \left(\frac{1}{\pi} \int_{\Sigma} \left(\ol \nu (\ol \varphi)\right)^{4/3} d\ol A\right)^{3/2}\\
&= m_{\text{reg}}(\Sigma), 
\end{align*}
since $a_n \to 0$ and $\Sigma_n \to \Sigma$ in $C^1$.  Then by definition of ZAS mass,
$$m_{\text{ZAS}}(\Sigma)\geq m_{\text{reg}}(\Sigma)$$
for ZAS admitting a global harmonic resolution.  Proposition \ref{prop_reg_finite_mass} below gives 
the reverse inequality.
\end{proof}
As a consequence of this result, we may interchangeably use the terms ``mass'' and ``regular 
mass'' whenever a global harmonic resolution exists; by Proposition \ref{prop_ghr_exists},
this is the case for all harmonically regular ZAS.  Alternatively, Corollary \ref{cor_masses_agree}
directly proves (using Proposition \ref{prop_masses_agree}) that $m_{\text{ZAS}}(\Sigma)=m_{\text{reg}}(\Sigma)$ 
for harmonically regular ZAS.  The question of whether 
$m_{\text{ZAS}}(\Sigma)=m_{\text{reg}}(\Sigma)$ for merely regular ZAS 
is not fully resolved; the answer is known to be yes in the spherically symmetric case.
At the very least, we have an inequality relating the two definitions:
\begin{prop}[Proposition 56 of \cite{jauregui}]
\label{prop_reg_finite_mass}
If $\Sigma = \partial M$ consists of regular ZAS, then
$$m_{\text{ZAS}}(\Sigma) \leq m_{\text{\emph{reg}}}(\Sigma).$$
\end{prop}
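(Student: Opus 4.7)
The plan is to reduce the inequality to a lim inf comparison between two flux-type integrals in the $\ol g$ metric, and then to analyze the convergence of the auxiliary function $u_n := \ol\varphi\,\varphi_n$ as $\Sigma_n \to \Sigma$. Fix a local resolution $(\ol g,\ol\varphi)$ of $\Sigma$ on a neighborhood $U$, and let $\varphi_n$ denote the $g$-harmonic function on $E_n := M \setminus \Omega_n$ with $\varphi_n = 0$ on $\Sigma_n$ and $\varphi_n \to 1$ at infinity. For $n$ large, $\Sigma_n \subset U$, and the same change of variables used in the proof of Proposition \ref{prop_masses_agree} (namely $dA = \ol\varphi^4 d\ol A$, $\nu = \ol\varphi^{-2}\ol\nu$, and $\varphi_n|_{\Sigma_n} = 0$; note that this step does \emph{not} require $\ol\varphi$ to be $\ol g$-harmonic) yields
$$m_{\text{reg}}(\Sigma_n) = -\frac{1}{4}\left(\frac{1}{\pi}\int_{\Sigma_n} \ol\nu(u_n)^{4/3}\,d\ol A\right)^{3/2}.$$
Combined with the formula $m_{\text{reg}}(\Sigma) = -\tfrac{1}{4}\left(\tfrac{1}{\pi}\int_\Sigma \ol\nu(\ol\varphi)^{4/3}\,d\ol A\right)^{3/2}$, the proposition becomes equivalent to
$$\liminf_{n\to\infty}\int_{\Sigma_n} \ol\nu(u_n)^{4/3}\,d\ol A \;\geq\; \int_\Sigma \ol\nu(\ol\varphi)^{4/3}\,d\ol A.$$

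Next I would study $u_n$. The maximum principle gives $0 \leq \varphi_n \leq 1$ on $E_n$, hence $u_n \leq \ol\varphi$ on $U \cap E_n$. Transforming the Laplacian via $g = \ol\varphi^4\ol g$ and using $\Delta_g \varphi_n = 0$ produces the linear elliptic identity $\ol\Delta u_n = \varphi_n\,\ol\Delta\ol\varphi$ on $U \cap E_n$, so $u_n$ and $\ol\varphi$ both solve the linear operator $L := \ol\Delta - q$ with $q := \ol\Delta\ol\varphi/\ol\varphi$. Because $\Sigma$ is a regular ZAS (so its $g$-capacity vanishes, by the mass-capacity link discussed in Section \ref{sec_mass}), the $g$-harmonic functions $\varphi_n$ converge to $1$ on compact subsets of $M \setminus \Sigma$, and thus $u_n \to \ol\varphi$ pointwise there; interior elliptic regularity then upgrades this to $C^1_{\text{loc}}$ convergence on $U \setminus \Sigma$.

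To conclude, parametrize $\Sigma_n$ as a graph $s = s_n(x)$ over $\Sigma$ with $\|s_n\|_{C^1(\Sigma)} \to 0$, pull the integral over $\Sigma_n$ back to $\Sigma$, and invoke Fatou's lemma. The pullback of the $\ol g$-area form converges uniformly to the $\ol g$-area form on $\Sigma$, so granting the pointwise convergence $\ol\nu(u_n)(y_n(x)) \to \ol\nu(\ol\varphi)(x)$ along points $y_n(x) \in \Sigma_n$ projecting to $x \in \Sigma$ (in fact only the one-sided bound $\liminf \ol\nu(u_n)(y_n) \geq \ol\nu(\ol\varphi)(y)$ is needed), Fatou delivers the displayed lim inf inequality.

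The main obstacle is precisely this last convergence of normal derivatives \emph{up to} $\Sigma$. When $\Sigma$ is harmonically regular, one may choose a local harmonic resolution so that $L = \ol\Delta$, and classical boundary regularity for harmonic functions gives uniform $C^1$ convergence of $u_n \to \ol\varphi$ up to $\Sigma$; this actually produces equality $m_{\text{ZAS}}(\Sigma) = m_{\text{reg}}(\Sigma)$, consistent with Proposition \ref{cor_masses_agree}. For merely regular ZAS, however, the potential $q$ can blow up on $\Sigma$ and classical boundary regularity fails. The one-sided estimate suffices for our purposes and can be extracted by a barrier/Hopf-type argument built from the bound $u_n \leq \ol\varphi$ together with the strict positivity of $\ol\nu(\ol\varphi)$ on $\Sigma$ guaranteed by Definition \ref{def_local_resolution}.
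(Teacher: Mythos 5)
First, a point of reference: the paper does not actually prove Proposition \ref{prop_reg_finite_mass} --- it defers the proof to \cite{jauregui} --- so there is no in-paper argument to compare yours against. On its own terms, your setup is correct and matches the computation the paper does carry out in Proposition \ref{prop_masses_agree}: the conformal transfer $m_{\text{reg}}(\Sigma_n) = -\tfrac14\bigl(\tfrac1\pi\int_{\Sigma_n}\ol\nu(u_n)^{4/3}d\ol A\bigr)^{3/2}$ with $u_n=\ol\varphi\varphi_n$ indeed uses only $g=\ol\varphi^4\ol g$ and $\varphi_n|_{\Sigma_n}=0$, the identity $\ol\Delta u_n=\varphi_n\,\ol\Delta\ol\varphi$ follows from $(\ref{eq_conf_laplacian})$, and the reduction of the proposition to $\liminf_n\int_{\Sigma_n}\ol\nu(u_n)^{4/3}d\ol A\ge\int_\Sigma\ol\nu(\ol\varphi)^{4/3}d\ol A$ is the right statement (note the sign: you need a \emph{lower} bound on these flux integrals). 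The appeal to Proposition \ref{prop_regular_zas_zero_capacity} and Lemma \ref{capacity_lemma} to get $u_n\to\ol\varphi$ in $C^1_{\text{loc}}$ away from $\Sigma$ is also fine.

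The gap is in the last step, which is the entire content of the proposition. The bound $u_n\le\ol\varphi$ is an \emph{upper} barrier, and $u_n$ and $\ol\varphi$ do not agree on $\Sigma_n$ (there $u_n=0$ while $\ol\varphi>0$), so no Hopf-type comparison built from it can produce the \emph{lower} bound $\liminf_n\ol\nu(u_n)\ge\ol\nu(\ol\varphi)$ that Fatou requires; what you would need is a lower barrier, i.e., a quantitative, uniform-in-$n$ statement that $\varphi_n$ does not degenerate near $\Sigma_n$, and that is precisely the nontrivial estimate being assumed. A second obstruction to the pointwise route: the supremum in Definition \ref{def_zas_mass} is over sequences converging only in $C^1$, so the $\Sigma_n$ have no uniform $C^{1,\alpha}$ control and classical boundary gradient estimates for $\ol\Delta u_n=\varphi_n\ol\Delta\ol\varphi$ are not available uniformly in $n$. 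A workable repair avoids pointwise boundary regularity altogether: Green's identity on the region between $\Sigma_n$ and a fixed surface $S\subset U$ against an arbitrary test function $\chi$ gives
\begin{equation*}
\int_{\Sigma_n}\chi\,\ol\nu(u_n)\,d\ol A \;=\; \int_{S}\bigl(\chi\,\ol\nu(u_n)-u_n\ol\nu(\chi)\bigr)d\ol A \;-\;\int\bigl(\chi\,\varphi_n\ol\Delta\ol\varphi - u_n\ol\Delta\chi\bigr)d\ol V,
\end{equation*}
and the right side converges (by your interior convergence and dominated convergence, since $\ol\Delta\ol\varphi$ is bounded on $U$) to $\int_\Sigma\chi\,\ol\nu(\ol\varphi)\,d\ol A$. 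Thus the nonnegative flux measures $\ol\nu(u_n)\,d\ol A|_{\Sigma_n}$ converge weakly-$*$ to $\ol\nu(\ol\varphi)\,d\ol A|_\Sigma$ while the area measures of $\Sigma_n$ converge to that of $\Sigma$; lower semicontinuity of $\mu\mapsto\int(d\mu/d\lambda)^{4/3}d\lambda$ for the convex, superlinear integrand $t\mapsto t^{4/3}$ then yields the desired $\liminf$ inequality. I would encourage you to replace the Fatou/barrier step with this weak-convergence argument.
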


We also point out that in regards to the definition of mass, there exists a sequence of 
surfaces that attains the supremum and for which the limsup may be replaced by a limit.  
\begin{prop} [Proposition 55 of \cite{jauregui}]
There exists a sequence of surfaces $\{\Sigma_n\}$
converging in $C^1$ to $\Sigma$ such that
\begin{equation*}
\lim_{n \to \infty} m_{\text{\emph{reg}}}(\Sigma_n) = m_{\text{ZAS}}(\Sigma).
\end{equation*}
\end{prop}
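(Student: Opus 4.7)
The plan is a diagonal extraction from an optimizing family of sequences. First I would dispose of the case $m_{\text{ZAS}}(\Sigma) = -\infty$: since $m_{\text{reg}}(\Sigma_n) \leq 0$ for every surface $\Sigma_n$ (the integrand in $(\ref{m_reg_sigma_n})$ is nonnegative by the Hopf lemma applied to $\varphi_n$), and the hypothesis forces $\limsup_{n\to\infty} m_{\text{reg}}(\Sigma_n) = -\infty$ for every admissible sequence, the extended-real identity $\limsup = \liminf = \lim$ (all equal to $-\infty$) holds. Thus in this case any sequence converging in $C^1$ to $\Sigma$ does the job.

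Now assume $m_{\text{ZAS}}(\Sigma) \in (-\infty, 0]$. By definition of supremum, for each $k \in \mathbb{N}$ I can select a sequence $\{\Sigma_n^{(k)}\}_{n \in \mathbb{N}}$ of surfaces converging in $C^1$ to $\Sigma$ with
\begin{equation*}
\limsup_{n \to \infty} m_{\text{reg}}\left(\Sigma_n^{(k)}\right) \;>\; m_{\text{ZAS}}(\Sigma) - \tfrac{1}{k}.
\end{equation*}
By the definition of $\limsup$, for each $k$ there are infinitely many indices $n$ with $m_{\text{reg}}(\Sigma_n^{(k)}) > m_{\text{ZAS}}(\Sigma) - \tfrac{2}{k}$. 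Using the collar coordinates $(x,s) \in \Sigma \times [0,a)$ from Section \ref{sec_surfaces}, each $\Sigma_n^{(k)}$ is the graph of a function $s_n^{(k)}: \Sigma \to [0,a)$, and $\|s_n^{(k)}\|_{C^1(\Sigma)} \to 0$ as $n \to \infty$ for each fixed $k$. Therefore I may choose $n_k$ such that both
\begin{equation*}
\bigl\|s_{n_k}^{(k)}\bigr\|_{C^1(\Sigma)} < \tfrac{1}{k} \qquad \text{and} \qquad m_{\text{reg}}\bigl(\Sigma_{n_k}^{(k)}\bigr) > m_{\text{ZAS}}(\Sigma) - \tfrac{2}{k}
\end{equation*}
hold simultaneously. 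Set $\Sigma_k^* := \Sigma_{n_k}^{(k)}$.

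The first inequality ensures $\Sigma_k^* \to \Sigma$ in $C^1$, so $\{\Sigma_k^*\}$ is itself an admissible sequence in Definition \ref{def_zas_mass}. The second inequality yields $\liminf_{k\to\infty} m_{\text{reg}}(\Sigma_k^*) \geq m_{\text{ZAS}}(\Sigma)$, while the definition of $m_{\text{ZAS}}$ as a supremum forces $\limsup_{k\to\infty} m_{\text{reg}}(\Sigma_k^*) \leq m_{\text{ZAS}}(\Sigma)$. The two bounds together show the limit exists and equals $m_{\text{ZAS}}(\Sigma)$.

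The only real subtlety, and the step most prone to an oversight, is the simultaneous choice of $n_k$: one must realize both the $C^1$ smallness of the graph parameter (which holds for all large $n$) and the near-optimality of $m_{\text{reg}}$ (which a priori holds only along a subsequence, since it comes from a $\limsup$, not a limit). Once one recognizes that the near-optimality set is infinite, the intersection with any tail of the $C^1$-small set is nonempty, and the diagonal index $n_k$ can be selected. Everything else is bookkeeping with the extended-real arithmetic of $\limsup$ and $\liminf$.
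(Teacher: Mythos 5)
Your argument is correct and is essentially the paper's own approach: the paper defers the details to \cite{jauregui} but describes the proof as precisely this diagonalization applied to a maximizing sequence of sequences $\{\Sigma_n\}_i$. Your handling of the $m_{\text{ZAS}}(\Sigma)=-\infty$ case and the simultaneous choice of $n_k$ (intersecting the infinite near-optimality set with the cofinite $C^1$-smallness set) fills in the bookkeeping correctly.
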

The proof is a basic diagonalization argument applied to a maximizing sequence of sequences $\{\Sigma_n\}_i$.

\subsection{The capacity of ZAS}
We introduce the capacity of a collection of ZAS in this section.  This quantity has a 
relationship with the mass that plays a role in the proof of the Riemannian ZAS 
inequality (Theorem \ref{thm_zas_ineq_full}).
\begin{definition}
Suppose $S=\partial \Omega$ is a surface in $M$ that is a graph over $\partial M$
(recall this terminology from section \ref{sec_surfaces}).  Let 
$\varphi$ be the unique $g$-harmonic 
function on $M \sm \Omega$ that vanishes on $S$ and tends to 1 at infinity. Then the 
\textbf{capacity} of $S$ is defined to be the number
\begin{equation*}
C(S) = \int_{M \setminus \Omega} |\nabla \varphi|^2 dV,
\end{equation*}
where $|\nabla \varphi|^2$ and $dV$ are taken with respect to $g$.
\end{definition}
The fact that $C(S)$ is finite (and moreover the existence of $\varphi$) follows from asymptotic flatness.
The above integral is unchanged if  $\varphi$ is replaced with $1-\varphi$.  Since $1-\varphi$ is harmonic,
we have \mbox{$|\nabla (1-\varphi)|^2 = \Div\big((1-\varphi) \nabla(1-\varphi)\big)$.}  Applying
Stokes' theorem and the boundary conditions on $\varphi$, we conclude that
\begin{equation}
C(S) =  \int_{S} \nu(\varphi) dA,
\label{eq_cap_boundary}
\end{equation}
where $\nu$ is the unit normal to $S$ pointing toward infinity. The capacity is also characterized
as the minimum of an energy functional:
\begin{equation}
C(S) = \inf_{f} \int_{M \sm \Omega} |\nabla f|^2 dV,
\label{eq_cap_inf}
\end{equation}
where the infimum is taken over all locally Lipschitz functions $f$ that vanish on $\Sigma$ and tend to 1 at 
infinity; $\varphi$ is the unique function attaining the infimum.

Now we recall a classical monotonicity property of capacity.
\begin{lemma}
If $S_1$ and $S_2$ are surfaces that are graphs over $\partial M$ and $S_1$ is enclosed by $S_2$,
then $C(S_1) \leq C(S_2)$.  Moreover, equality holds if and only if $S_1 = S_2$.
\label{lemma_cap}
\end{lemma}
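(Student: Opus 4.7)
My plan is to prove the monotonicity via the variational characterization of capacity in $(\ref{eq_cap_inf})$, using an explicit admissible test function built from the potential for the larger surface.

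Let $\Omega_1, \Omega_2$ be the bounded regions with $\partial \Omega_i = S_i$, so that $\Omega_1 \subset \Omega_2$ by the enclosure hypothesis. Let $\varphi_i$ denote the capacity potential for $S_i$ (the $g$-harmonic function on $M \sm \Omega_i$ vanishing on $S_i$ and tending to $1$ at infinity). I would define a candidate competitor $\tilde\varphi$ on $M \sm \Omega_1$ by setting $\tilde\varphi \equiv 0$ on $\Omega_2 \sm \Omega_1$ and $\tilde\varphi = \varphi_2$ on $M \sm \Omega_2$. Since $\varphi_2$ vanishes on $S_2$ by construction, the two definitions agree on $S_2$, so $\tilde\varphi$ is continuous; away from $S_2$ it is smooth, and across $S_2$ it is locally Lipschitz because $\varphi_2$ is. The function $\tilde\varphi$ vanishes on $S_1$ (in fact on all of $\overline{\Omega_2 \sm \Omega_1}$) and tends to $1$ at infinity, hence is admissible in $(\ref{eq_cap_inf})$ for $C(S_1)$.

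Applying $(\ref{eq_cap_inf})$ gives
\begin{equation*}
C(S_1) \leq \int_{M \sm \Omega_1} |\nabla \tilde\varphi|^2 dV = \int_{M \sm \Omega_2} |\nabla \varphi_2|^2 dV = C(S_2),
\end{equation*}
which proves the inequality. For the equality statement, suppose $C(S_1) = C(S_2)$; then $\tilde\varphi$ attains the infimum $(\ref{eq_cap_inf})$ defining $C(S_1)$. By the uniqueness clause quoted immediately after $(\ref{eq_cap_inf})$, $\tilde\varphi = \varphi_1$ on $M \sm \Omega_1$. In particular, $\varphi_1 \equiv 0$ on $\Omega_2 \sm \overline{\Omega_1}$. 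But $\varphi_1$ is harmonic on the interior of $M \sm \Omega_1$ and nonnegative there, and attains the minimum value $0$ on its boundary component $S_1$; by the strong maximum principle, $\varphi_1$ cannot vanish on any nonempty open subset of the interior of $M \sm \Omega_1$ without being identically zero, which contradicts $\varphi_1 \to 1$ at infinity. Hence $\Omega_2 \sm \overline{\Omega_1}$ must be empty; combined with $\Omega_1 \subset \Omega_2$ and the fact that both are open bounded regions with the same kind of boundary graph over $\partial M$, this forces $\Omega_1 = \Omega_2$ and therefore $S_1 = S_2$.

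The only real subtlety is verifying that the patched competitor $\tilde\varphi$ genuinely lies in the admissible class, i.e., is locally Lipschitz across the interface $S_2$; this follows because $\varphi_2$ is smooth up to $S_2$ and vanishes there, so the zero extension across $S_2$ yields a Lipschitz (indeed $C^{0,1}$) function with square-integrable weak gradient supported in $M \sm \Omega_2$. With that in hand, the inequality is a one-line energy comparison, and the rigidity reduces cleanly to the uniqueness of the harmonic minimizer plus the strong maximum principle.
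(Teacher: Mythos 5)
Your proposal is correct and follows essentially the same route as the paper: extend $\varphi_2$ by zero on $\Omega_2 \sm \Omega_1$ to get an admissible competitor in the variational characterization $(\ref{eq_cap_inf})$ for $C(S_1)$, yielding $C(S_1) \leq C(S_2)$, and then use the uniqueness of the harmonic minimizer to obtain strictness when the region between the surfaces is nonempty. Your write-up merely spells out the rigidity step (via the strong maximum principle) that the paper leaves terse.
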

\begin{proof}
Say $S_1 = \partial \Omega_1$ and $S_2 = \partial \Omega_2$.  Let $\varphi_1, \varphi_2$ be 
the harmonic functions vanishing on $S_1, S_2$ respectively, and tending to 1 at infinity.  
Since $\varphi_2$ can be extended continuously by zero in $\Omega_2 \sm \Omega_1$ while 
remaining locally Lipschitz, we see that $\int_{M \sm \Omega_1} 
|\nabla \varphi_2|^2 dV$ gives an upper bound for $C(S_1)$ in $(\ref{eq_cap_inf})$ yet equals 
$C(S_2)$.  Therefore $C(S_1) \leq C(S_2)$.  

In the above, if $S_1 \neq S_2$, then by the regularity of these surfaces ($C^1$ is sufficient),
the volume of $\Omega_2 \sm \Omega_1$ is positive.  The above proof shows $C(S_1) < C(S_2)$.
\end{proof}

We are ready to define the capacity of ZAS \cites{bray_npms}.

\begin{definition}
Assume the components of $\Sigma=\partial M$ are ZAS of $g$, and let 
$\{\Sigma_n\}_{n=1}^\infty$ be a sequence of surfaces converging to $\Sigma$ in $C^0$.  
Define the \textbf{capacity} of $\Sigma$ as
\begin{equation*}
C(\Sigma) = \lim_{n \to \infty} C(\Sigma_n).
\end{equation*}
The limit exists by the monotonicity guaranteed by Lemma \ref{lemma_cap}.
\end{definition}
Note that the capacity takes values in $[0, \infty)$.  We will often
distinguish between the cases of zero capacity and positive capacity. We show now 
that $C(\Sigma)$ is well-defined (as done in \cite{robbins}).
\begin{prop}
The capacity of $\Sigma$ as defined above is independent of the sequence $\{\Sigma_n\}$.
\end{prop}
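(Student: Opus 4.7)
The plan is to construct a canonical nested family of reference surfaces whose capacities decrease to a limit $L$ by the monotonicity lemma, and then to sandwich any given sequence $\{\Sigma_n\}$ between members of that family to conclude that $\lim_n C(\Sigma_n) = L$ independent of the sequence chosen.

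First I would use the collar neighborhood theorem to fix a neighborhood $U \cong \Sigma \times [0,a)$ of $\Sigma$ as in section \ref{sec_surfaces}, with collar coordinate $s \in [0,a)$. For each $\delta \in (0,a)$, let $T_\delta \subset U$ denote the level surface $\{s = \delta\}$, whose bounded enclosed region is $\Omega_{T_\delta} = \{0 < s < \delta\}$. Since $\delta_1 < \delta_2$ implies $\Omega_{T_{\delta_1}} \subset \Omega_{T_{\delta_2}}$, Lemma \ref{lemma_cap} shows that the function $\delta \mapsto C(T_\delta)$ is nondecreasing, and being bounded below by $0$ it has a limit
\begin{equation*}
L := \lim_{\delta \to 0^+} C(T_\delta) = \inf_{\delta \in (0,a)} C(T_\delta) \in [0,\infty).
\end{equation*}
This construction depends only on the smooth structure near $\Sigma$, not on any particular sequence.

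Next, for any sequence $\{\Sigma_n\}$ converging in $C^0$ to $\Sigma$, I would write each $\Sigma_n$ as the graph $s = s_n(x)$. Since $\Sigma_n$ is compact and lies in the interior of $M$, $\delta_n^- := \min_x s_n(x) > 0$, while $C^0$ convergence gives $\delta_n^+ := \max_x s_n(x) \to 0$. The pointwise inequalities $\delta_n^- \leq s_n(x) \leq \delta_n^+$ translate into the enclosure relations $\Omega_{T_{\delta_n^-}} \subset \Omega_n \subset \Omega_{T_{\delta_n^+}}$, so Lemma \ref{lemma_cap} yields
\begin{equation*}
C(T_{\delta_n^-}) \leq C(\Sigma_n) \leq C(T_{\delta_n^+}).
\end{equation*}
The left-hand side is bounded below by $L$ via the infimum characterization of $L$, and the right-hand side tends to $L$ as $\delta_n^+ \to 0$. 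Sandwiching gives $\lim_n C(\Sigma_n) = L$, which simultaneously establishes the existence of the limit and its independence from the chosen sequence.

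The argument is essentially bookkeeping in the collar coordinates, so there is no serious obstacle. One just needs to observe that for a graph $\Sigma_n = \{s = s_n(x)\}$ the bounded enclosed region is literally the hypograph $\{(x, s') \in U : 0 < s' < s_n(x)\}$, so pointwise inequalities between defining functions translate directly into the enclosure relations required by Lemma \ref{lemma_cap}.
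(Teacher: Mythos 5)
Your proposal is correct and rests on the same two ingredients as the paper's own proof: the monotonicity of capacity under enclosure (Lemma \ref{lemma_cap}) and the fact that $C^0$ convergence forces the needed enclosure relations. The paper simply interleaves two arbitrary sequences directly (each $\Sigma_n$ eventually encloses the tail of the other sequence) rather than routing everything through the canonical family of collar level sets $T_\delta$, but this is a cosmetic difference, not a different argument.
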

\begin{proof}
Let $\{\Sigma_n\}$ and $\{\Sigma_i'\}$ be two sequences of surfaces converging to $\Sigma$ 
in $C^0$.  Then for any $n > 0$, $\Sigma_n$ encloses $\Sigma_i'$ for all $i$ sufficiently 
large.  By Lemma \ref{lemma_cap}, $C(\Sigma_i') \leq C(\Sigma_n)$
for such $n$ and $i$.  Taking the limit $i \to \infty$, we have $\lim_{i \to 
\infty}C(\Sigma_i') \leq C(\Sigma_n)$ for all $n$.  Taking the limit $n \to \infty$,
we have $\lim_{i \to \infty}C(\Sigma_i') \leq \lim_{n \to \infty}C(\Sigma_n)$.  By the 
symmetry of the argument, the opposite inequality holds as well.
\end{proof}
As an example, we show below that a collection of \emph{regular} ZAS has zero capacity. Examples of ZAS with 
positive capacity will be given in section \ref{sec_examples}.

\begin{prop}
If the components of $\Sigma=\partial M$ are regular ZAS, then the capacity of $\Sigma$ is zero.
\label{prop_regular_zas_zero_capacity}
\end{prop}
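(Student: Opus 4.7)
The plan is to exploit the variational characterization of capacity $(\ref{eq_cap_inf})$, bounding $C(\Sigma_\epsilon)$ from above by the Dirichlet energy of carefully chosen test functions along a specific approximating sequence. Since the components of $\Sigma$ are regular, assemble a local resolution $(\ol g, \ol \varphi)$ on a neighborhood $U$ of $\Sigma$ from local resolutions on disjoint neighborhoods of the individual components. Pick $a>0$ small enough that $V_a := \{\ol\varphi < a\}$ is a collar neighborhood of $\Sigma$ with compact closure in $U$ (possible because $\ol\nu(\ol\varphi)>0$ on $\Sigma$ by Definition \ref{def_local_resolution}), and for $0<\epsilon<a$ let $\Sigma_\epsilon := \{\ol\varphi=\epsilon\}$. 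These are smooth graphs over $\Sigma$ that converge to $\Sigma$ smoothly, hence in $C^0$, as $\epsilon \to 0$.

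On $M\setminus\Omega_\epsilon$ (with $\Omega_\epsilon$ the region enclosed by $\Sigma_\epsilon$), define the locally Lipschitz test function
$$
f_\epsilon = \begin{cases} \displaystyle \frac{\log(\ol\varphi/\epsilon)}{\log(a/\epsilon)}, & \text{on } \{\epsilon \leq \ol\varphi \leq a\},\\ 1, & \text{elsewhere}. \end{cases}
$$
This vanishes on $\Sigma_\epsilon$, equals $1$ at infinity, and is continuous across $\{\ol\varphi=a\}$. The decisive calculation is a conformal scaling: from $g = \ol\varphi^4 \ol g$ in dimension three one obtains $|\nabla f|_g^2\, dV_g = \ol\varphi^2\, |\ol\nabla f|_{\ol g}^2\, dV_{\ol g}$, and for the logarithmic profile $|\ol\nabla f_\epsilon|_{\ol g}^2 = |\ol\nabla \ol\varphi|_{\ol g}^2/(\ol\varphi^2 \log^2(a/\epsilon))$. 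The $\ol\varphi^2$ factors cancel exactly, so
$$
C(\Sigma_\epsilon) \;\leq\; \int_{M\setminus\Omega_\epsilon} |\nabla f_\epsilon|_g^2\, dV_g \;=\; \frac{1}{\log^2(a/\epsilon)} \int_{V_a\setminus\Omega_\epsilon} |\ol\nabla \ol\varphi|_{\ol g}^2\, dV_{\ol g} \;\leq\; \frac{K}{\log^2(a/\epsilon)},
$$
where $K < \infty$ is independent of $\epsilon$ because $|\ol\nabla \ol\varphi|_{\ol g}^2$ is a continuous function on the compact set $\ol{V_a}$. Letting $\epsilon \to 0$ yields $C(\Sigma) = \lim_\epsilon C(\Sigma_\epsilon) = 0$.

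There is no serious obstacle once the right test function is identified; the only subtle choice is the logarithmic profile. A naive linear interpolation $(\ol\varphi-\epsilon)/(a-\epsilon)$ would produce energy of order $1$ uniformly in $\epsilon$, because the conformal weight $\ol\varphi^2$ survives integration over $V_a$. The log is engineered so that $|\ol\nabla f_\epsilon|_{\ol g}^2 \sim \ol\varphi^{-2}$, precisely cancelling that weight; this is the same mechanism by which the logarithm witnesses the vanishing of the $2$-capacity of a point in $\R^2$.
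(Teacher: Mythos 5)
Your proof is correct, and it rests on the same idea as the paper's: bound the capacity via the variational characterization by exhibiting an explicit test function whose Dirichlet energy, after the conformal rewriting $|\nabla f|_g^2\, dV_g = \ol\varphi^2 |\ol\nabla f|_{\ol g}^2\, dV_{\ol g}$, tends to zero. The only real difference is the profile. The paper uses a \emph{linear} ramp $\psi_\epsilon = \ol r/\epsilon$ supported on the \emph{shrinking} collar $\{0 \le \ol r \le \epsilon\}$ (with $\ol r$ the $\ol g$-distance to $\Sigma$): there $|\ol\nabla \psi_\epsilon|_{\ol g}^2 = \epsilon^{-2}$, the weight satisfies $\ol\varphi^2 = O(\epsilon^2)$ because $\ol\varphi$ vanishes on $\Sigma$ with bounded gradient, and the collar has $\ol g$-volume $O(\epsilon)$, so the energy is $O(\epsilon)$. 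Your logarithmic ramp on the \emph{fixed} collar $\{\epsilon \le \ol\varphi \le a\}$ instead cancels the weight pointwise and yields $O(1/\log^2(a/\epsilon))$. Both work; your closing remark that ``a linear interpolation would produce energy of order $1$'' is true only for a linear ramp across a collar of fixed width, and does not apply to the paper's shrinking-collar version — so the logarithm, while elegant, is not actually forced here (the mechanism is genuinely different from the $2$-capacity of a point in $\R^2$, where no such rescue is available). A minor structural plus of your version: by estimating $C(\Sigma_\epsilon)$ for an explicit $C^0$-convergent sequence of level sets and passing to the limit, you work directly from the definition of $C(\Sigma)$ rather than invoking the global variational formula $(\ref{cap_alt})$ for the singular boundary itself, which the paper asserts without proof.
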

\begin{proof}
First, observe that the capacity of $\Sigma$ is equal to
\begin{equation}
C(\Sigma) = \inf_{\psi} \int_M |d \psi|_{g}^2 dV,
\label{cap_alt}
\end{equation}
where the infimum is taken over all locally Lipschitz functions $\psi$ that vanish on $\Sigma$ and tend to 1 at infinity.
Now, let $(\ol g, \ol \varphi)$ be some local resolution, and let $\ol r$ denote the distance function from $\Sigma$ with 
respect to $\ol g$.  For small $\epsilon > 0$, let $\psi_\epsilon$ be the Lipschitz test function on $M$ given by:
\begin{equation*}
\psi_\epsilon(r)=\begin{cases}
\frac{\ol r}{\epsilon}, & 0 \leq \ol r \leq \epsilon\\
1,& \text{else}
\end{cases}.
\end{equation*}
It is straightforward to show the energy of $\psi_\epsilon$ in the sense of $(\ref{cap_alt})$ is of order $\epsilon$.
\end{proof}

\subsection{The relationship between mass and capacity}
Recently appearing in the literature is an estimate relating the capacity of the boundary of an
asymptotically flat manifold to the boundary geometry \cite{bray_miao}.
In somewhat the same spirit, the following result relates the capacity of a ZAS to the Hawking masses of 
nearby surfaces \cite{robbins}.  It was proved using weak inverse mean curvature flow in the sense of Huisken
and Ilmanen \cite{imcf}.
\begin{thm}[Robbins \cite{robbins}] 
\label{thm_capacity}
Assume $(M,g)$ has nonnegative scalar curvature.  If $\Sigma$ is a 
connected ZAS with positive capacity, and if $\{\Sigma_n\}$ converges in $C^1$ to $\Sigma$, then 
\begin{equation*}
\limsup_{n \to \infty} m_H(\Sigma_n) = -\infty.
\end{equation*}
\end{thm}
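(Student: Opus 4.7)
The plan is to argue by contradiction using Huisken--Ilmanen's weak inverse mean curvature flow (IMCF) and its Geroch-type monotonicity of the Hawking mass, combined with a capacity estimate derived from the flow. Suppose the conclusion failed. Then, after passing to a subsequence, there would exist $K>0$ with $m_H(\Sigma_n)\geq -K$ for all $n$. Since $\Sigma$ is connected and $\Sigma_n\to \Sigma$ in $C^1$, each $\Sigma_n$ is connected for $n$ large.

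I would run the weak IMCF on the exterior of $\Omega_n$ with initial surface $\Sigma_n$, obtaining a proper function $u_n:M\sm \Omega_n\to [0,\infty)$ whose level sets $\Sigma_n^t:=\{u_n=t\}$ satisfy $|\Sigma_n^t|=|\Sigma_n^+|\,e^t$, where $\Sigma_n^+$ denotes the outer-minimizing hull of $\Sigma_n$ (so $|\Sigma_n^+|\leq |\Sigma_n|\to 0$, since $\Sigma$ is a ZAS). Because $R_g\geq 0$ and the evolving surface remains connected, the Geroch--Huisken--Ilmanen monotonicity gives $m_H(\Sigma_n^t)\geq m_H(\Sigma_n)\geq -K$ for every $t\geq 0$. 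Solving for $\int H^2$ in the definition of $m_H$ yields the uniform estimate
\begin{equation*}
\int_{\Sigma_n^t} H^2\,dA \;\leq\; 16\pi + \frac{K'}{\sqrt{|\Sigma_n^t|}},
\end{equation*}
with $K'$ depending only on $K$.

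Next, I would test the capacity of $\Sigma_n$ against the Lipschitz function $\varphi_n := 1 - e^{-u_n/2}$, which vanishes on $\Sigma_n$ and tends to $1$ at infinity (by properness of $u_n$). Using the variational characterization $(\ref{eq_cap_inf})$, the coarea formula, and the IMCF identity $|\nabla u_n|=H$ (valid a.e.\ in the weak sense), one obtains
\begin{equation*}
C(\Sigma_n) \;\leq\; \int_{M\sm\Omega_n} |\nabla \varphi_n|^2\,dV \;=\; \tfrac{1}{4}\int_0^\infty e^{-t} \int_{\Sigma_n^t} H\, dA\, dt.
\end{equation*}
Cauchy--Schwarz gives $\int_{\Sigma_n^t} H\,dA \leq |\Sigma_n^t|^{1/2}(\int H^2\,dA)^{1/2}$, which together with the preceding $L^2$ bound and $|\Sigma_n^t|=|\Sigma_n^+|\,e^t$ allows a direct evaluation of the $t$-integral, producing
\begin{equation*}
C(\Sigma_n) \;\leq\; C_1\,|\Sigma_n^+|^{1/2} \;+\; C_2\,|\Sigma_n^+|^{1/4} \;\longrightarrow\; 0.
\end{equation*}
On the other hand, Lemma \ref{lemma_cap} forces $C(\Sigma_n)\geq C(\Sigma)>0$, a contradiction.

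The main obstacle is the careful deployment of weak IMCF: one must justify that the Hawking-mass monotonicity applies across the initial jump to the outer-minimizing hull, that connectedness is preserved along the flow, that $u_n$ is proper so $\varphi_n$ is a bona fide admissible test function, and that $|\nabla u_n|=H$ holds strongly enough for the coarea computation. All of these are built into the Huisken--Ilmanen framework, but they are exactly the places where care is needed; everything else in the argument is an elementary manipulation of the Hawking-mass inequality and H\"older's inequality.
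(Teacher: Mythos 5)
Your argument is correct and follows the same route the paper itself points to: the paper does not reproduce a proof of Theorem \ref{thm_capacity}, stating only that it was established in \cite{robbins} via weak inverse mean curvature flow in the sense of Huisken and Ilmanen, which is exactly the machinery you deploy (Geroch monotonicity to control $\int H^2$ on the level sets, then a test function built from the flow to force the capacity to zero). The only step worth writing out explicitly is the initial jump to the minimizing hull: since $|\Sigma_n^+|\le|\Sigma_n|$ and $\int_{\Sigma_n^+}H^2\le\int_{\Sigma_n}H^2$ (the hull is minimal off $\Sigma_n$ and has $H\ge 0$ where it touches $\Sigma_n$), one checks directly that $m_H(\Sigma_n^+)\ge\min\bigl(0,\,m_H(\Sigma_n)\bigr)\ge -K$, which is all your lower bound requires.
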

With no assumption on the scalar curvature, we prove the following sufficient condition for the mass of
ZAS to equal $-\infty$.
\begin{thm}
\label{thm_pos_cap}
If $\Sigma=\partial M$ is a collection of ZAS of positive capacity, then $$m_{\text{ZAS}}(\Sigma) = -\infty.$$
\end{thm}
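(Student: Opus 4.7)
The plan is to show that for every sequence $\{\Sigma_n\}$ converging in $C^1$ to $\Sigma$, we have $\limsup_{n\to\infty} m_{\text{reg}}(\Sigma_n) = -\infty$; since $m_{\text{ZAS}}(\Sigma)$ is a supremum of such quantities, this forces $m_{\text{ZAS}}(\Sigma) = -\infty$.

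The key is to estimate $m_{\text{reg}}(\Sigma_n)$ from above using the capacity $C(\Sigma_n)$ and the area $|\Sigma_n|_g$. First, I would invoke the boundary expression $(\ref{eq_cap_boundary})$, which gives
\begin{equation*}
C(\Sigma_n) = \int_{\Sigma_n} \nu(\varphi_n)\, dA,
\end{equation*}
where $\varphi_n$ is the $g$-harmonic function vanishing on $\Sigma_n$ and tending to $1$ at infinity. Applying H\"{o}lder's inequality with exponents $4/3$ and $4$ to the integrand $\nu(\varphi_n) \cdot 1$ yields
\begin{equation*}
C(\Sigma_n) \leq \left(\int_{\Sigma_n} \nu(\varphi_n)^{4/3}\, dA\right)^{3/4} |\Sigma_n|_g^{1/4},
\end{equation*}
which rearranges to
\begin{equation*}
\int_{\Sigma_n} \nu(\varphi_n)^{4/3}\, dA \geq \frac{C(\Sigma_n)^{4/3}}{|\Sigma_n|_g^{1/3}}.
\end{equation*}
Plugging this into the formula $(\ref{m_reg_sigma_n})$ for $m_{\text{reg}}(\Sigma_n)$ gives the key estimate
\begin{equation*}
m_{\text{reg}}(\Sigma_n) \leq -\frac{1}{4\pi^{3/2}} \cdot \frac{C(\Sigma_n)^{2}}{|\Sigma_n|_g^{1/2}}.
\end{equation*}

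Next I would take the limit $n \to \infty$. By the definition of capacity for ZAS and its independence of the approximating sequence (both use only $C^0$ convergence), $C(\Sigma_n) \to C(\Sigma) > 0$. Meanwhile, since the components of $\Sigma = \partial M$ are ZAS and $\Sigma_n \to \Sigma$ in $C^1$, Definition \ref{def_zas} guarantees $|\Sigma_n|_g \to 0$. Therefore the right-hand side of the displayed inequality tends to $-\infty$, which forces $\limsup_{n\to\infty} m_{\text{reg}}(\Sigma_n) = -\infty$. Taking the supremum over all such sequences, $m_{\text{ZAS}}(\Sigma) = -\infty$, as claimed.

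There is no real obstacle here: the proof is essentially one application of H\"{o}lder's inequality combined with the two defining properties of the setting (that $\Sigma$ has positive capacity and that areas of $C^1$-approximating surfaces vanish). The only point to verify carefully is that the capacity of $\Sigma_n$ itself converges to the capacity of the singular boundary $\Sigma$, which follows immediately from the monotonicity and well-definedness results already established (Lemma \ref{lemma_cap} and the proposition identifying $C(\Sigma)$ with the common limit along any $C^0$-approximating sequence).
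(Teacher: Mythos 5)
Your proof is correct and follows essentially the same route as the paper: the identical H\"{o}lder estimate $m_{\text{reg}}(\Sigma_n) \leq -\frac{1}{4\pi^{3/2}}|\Sigma_n|_g^{-1/2}\,C(\Sigma_n)^2$, combined with $|\Sigma_n|_g \to 0$ from the ZAS definition and $C(\Sigma_n) \to C(\Sigma) > 0$ from the definition of capacity. No discrepancies.
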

\begin{proof}
Suppose $\{\Sigma_n\}$ is any sequence of surfaces converging in $C^1$ to $\Sigma$.  Applying the definition of $m_{\text{reg}}$ and H\"{o}lder's 
inequality,
\begin{align*}
m_{\text{reg}}(\Sigma_n) &= -\frac{1}{4} \left(\frac{1}{\pi} \int_{\Sigma_n} \nu(\varphi_n)^{4/3} dA\right)^{3/2}\\
		  &\leq -\frac{1}{4\pi^{3/2}} |\Sigma_n|_g^{-1/2} \left(\int_{\Sigma_n} \nu(\varphi_n) dA\right)^2.
\end{align*}
The right hand side converges to $-\infty$, since $|\Sigma_n|_g \to 0$
and $\int_{\Sigma_n} \nu(\varphi_n) \to C(\Sigma) > 0$ (by
expression $(\ref{eq_cap_boundary})$ for the capacity of a surface and the definition of the 
capacity of a ZAS).  Therefore $\limsup_{n \to \infty} m_{\text{reg}}(\Sigma_n) = -\infty$ for arbitrary $\{\Sigma_n\}$, so $m_{\text{ZAS}}(\Sigma) = -\infty$.
\end{proof}
We show in section \ref{sec_examples} that the converse fails; there exist ZAS of zero 
capacity yet negative infinite mass.

\subsection{The local nature of mass and capacity}
A satisfactory definition of the mass of a collection of ZAS ought to only depend on the local 
geometry near the singularities.  Here we establish that the mass and the (sign of) capacity of ZAS are 
inherently local notions, despite their definitions in terms of global geometry. This section
is meant only to illustrate these ideas and is not essential to the main theorems.
Most of the proof of the following proposition was given by Robbins in \cite{robbins}.
\begin{prop}
Suppose all components of $\Sigma = \partial M$ are ZAS of $g$, and let $U$ be any neighborhood of $\Sigma$.  
Then 1) the sign of the capacity of $\Sigma$ and 2) the mass of $\Sigma$
depend only on the restriction of $g$ to $U$.
\label{prop_local_mass_cap}
\end{prop}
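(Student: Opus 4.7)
The plan is to replace the global harmonic function $\varphi_n$ used in defining $C(\Sigma)$ and $m_{\text{ZAS}}(\Sigma)$ by a local analog on a fixed annular region inside $U$, then show the local quantities carry the relevant asymptotic information. Fix a smooth surface $T \subset U$ enclosing $\Sigma$ with bounded region $\Omega_T \subset U$; for each $\Sigma_n = \partial \Omega_n$ enclosed by $T$, let $\psi_n$ be the unique $g$-harmonic function on $\Omega_T \setminus \bar \Omega_n$ with $\psi_n = 0$ on $\Sigma_n$ and $\psi_n = 1$ on $T$, and set
\begin{equation*}
C_{\text{loc}}(\Sigma_n, T) = \int_{\Sigma_n} \nu(\psi_n) \, dA, \qquad m_{\text{loc}}(\Sigma_n, T) = -\frac{1}{4}\left(\frac{1}{\pi}\int_{\Sigma_n} \nu(\psi_n)^{4/3} dA\right)^{3/2}.
\end{equation*}
Both quantities depend only on $g|_{\Omega_T}$, hence only on $g|_U$. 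The goal is to prove (i) $C(\Sigma) = 0$ iff $\lim_n C_{\text{loc}}(\Sigma_n, T) = 0$ for some (hence every) $C^0$-convergent sequence, and (ii) $m_{\text{ZAS}}(\Sigma) = \sup_{\{\Sigma_n\}} \limsup_n m_{\text{loc}}(\Sigma_n, T)$, the supremum over sequences $\Sigma_n \to \Sigma$ in $C^1$.

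For (i), extending $\psi_n$ by $1$ outside $T$ produces a locally Lipschitz competitor for the energy infimum in $(\ref{eq_cap_inf})$, yielding $C(\Sigma_n) \leq C_{\text{loc}}(\Sigma_n, T)$ and one direction. Conversely, assume $C(\Sigma) = 0$. The Sobolev inequality on the asymptotically flat manifold $M$, applied to $1 - \varphi_n$ (which decays at infinity), gives $\|1 - \varphi_n\|_{L^6(M)} \leq c \sqrt{C(\Sigma_n)} \to 0$, hence $1 - \varphi_n \to 0$ in $L^2$ on any fixed precompact set. Choosing such a set $W$ containing $T$ in its interior on which every $\varphi_n$ is harmonic for large $n$ (possible since $T$ has positive distance from $\Sigma$), the mean value inequality for the subharmonic function $(1-\varphi_n)^2$ upgrades this to uniform convergence $\varphi_n \to 1$ on $T$. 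Setting $c_n = \min_T \varphi_n \to 1$, the function $\min(\varphi_n / c_n, 1)$ is admissible for $C_{\text{loc}}(\Sigma_n, T)$ with energy at most $c_n^{-2} C(\Sigma_n) \to 0$, completing (i).

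For (ii), if $C(\Sigma) > 0$ then Theorem \ref{thm_pos_cap} gives $m_{\text{ZAS}}(\Sigma) = -\infty$, and by (i) this conclusion depends only on $g|_U$. Assume $C(\Sigma) = 0$ and $\Sigma_n \to \Sigma$ in $C^1$. Write $c_n = \min_T \varphi_n$, $C_n = \max_T \varphi_n$; both tend to $1$ by the argument above. The maximum principle applied on $\Omega_T \setminus \bar\Omega_n$ to the harmonic functions $\varphi_n - c_n \psi_n$ and $C_n \psi_n - \varphi_n$ gives $c_n \psi_n \leq \varphi_n \leq C_n \psi_n$, and comparing normal derivatives on $\Sigma_n$ (where all three functions vanish) yields $c_n \nu(\psi_n) \leq \nu(\varphi_n) \leq C_n \nu(\psi_n)$. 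Raising to $4/3$, integrating over $\Sigma_n$, and applying $-\frac{1}{4\pi^{3/2}}(\,\cdot\,)^{3/2}$ then gives
\begin{equation*}
C_n^2 \, m_{\text{loc}}(\Sigma_n, T) \leq m_{\text{reg}}(\Sigma_n) \leq c_n^2 \, m_{\text{loc}}(\Sigma_n, T).
\end{equation*}
Since $c_n, C_n \to 1$ and $m_{\text{loc}} \leq 0$, one checks $\limsup_n m_{\text{reg}}(\Sigma_n) = \limsup_n m_{\text{loc}}(\Sigma_n, T)$ for every such sequence (even when the common value is $-\infty$), and (ii) follows by taking suprema.

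The principal obstacle is the uniform boundary convergence $\varphi_n|_T \to 1$ under the sole assumption $C(\Sigma) = 0$; the raw facts $\int|\nabla \varphi_n|^2 \to 0$ and $0 \leq \varphi_n \leq 1$ do not by themselves give pointwise control. The crucial observation is that $T$ can be chosen with positive distance from $\Sigma$, so the $\varphi_n$ are uniformly harmonic on a fixed neighborhood of $T$ for all large $n$; this is what permits combining the global Sobolev inequality on $M$ with interior subharmonic mean value estimates. Once this uniform convergence is in place, both (i) and (ii) reduce to elementary maximum principle comparisons between $\varphi_n$ and $\psi_n$.
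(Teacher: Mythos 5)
Your overall strategy is the same as the paper's: the function $\psi_n$ you introduce is exactly the paper's $f_n^+$ (harmonic in the annulus between $\Sigma_n$ and a fixed surface inside $U$, with boundary values $0$ and $1$), your sandwich $c_n\psi_n \le \varphi_n \le C_n\psi_n$ and the resulting comparison of normal derivatives on $\Sigma_n$ reproduce inequality $(\ref{capacity_ineq})$ together with $f_n^- = \epsilon_n f_n^+$, and the reduction of the positive-capacity case to Theorem \ref{thm_pos_cap} is identical. The one place where you diverge is the step you yourself flag as the principal obstacle: proving that $\min_T \varphi_n \to 1$ when $C(\Sigma)=0$. The paper isolates this as Lemma \ref{capacity_lemma} and proves it by elementary means: for a nested sequence the $\varphi_n$ increase by the maximum principle, hence converge locally uniformly to a harmonic function whose Dirichlet energy on every compact set vanishes, so the limit is the constant $1$.

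Your substitute for that lemma has a genuine gap. You invoke a Sobolev inequality $\|1-\varphi_n\|_{L^6} \le c\sqrt{C(\Sigma_n)}$ with a constant $c$ independent of $n$, but the domains $M\setminus\Omega_n$ have boundaries collapsing onto the metric singularity, and no uniform Sobolev constant exists in general. Concretely, for the spherically symmetric ZAS $ds^2=dr^2+r^\alpha\,d\sigma^2$ with $\alpha>2$ (zero capacity, zero mass, hence squarely in the case you need), the cutoff $u_\epsilon$ equal to $1$ for $r<\epsilon/2$ and $0$ for $r>\epsilon$ satisfies $\|\nabla u_\epsilon\|_{L^2}^2 \sim \epsilon^{\alpha-1}$ while $\|u_\epsilon\|_{L^6}^6 \gtrsim \epsilon^{\alpha+1}$, so $\|u_\epsilon\|_{L^6}/\|\nabla u_\epsilon\|_{L^2}\sim \epsilon^{(4-2\alpha)/6}\to\infty$; restricting $u_\epsilon$ to $M\setminus\Omega_n$ once $\Sigma_n$ lies inside $\{r<\epsilon/2\}$ shows the Sobolev constants of these domains blow up. So the inequality cannot be applied ``on $M$'' as stated, and establishing it for the particular functions $1-\varphi_n$ would require a separate argument. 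This is the only defective step: part (i) actually survives with merely $\liminf_n c_n \ge c_1>0$, which follows from monotonicity of the $\varphi_n$, and part (ii) is repaired by substituting the paper's Lemma \ref{capacity_lemma} for the Sobolev argument, after which your computation coincides with the paper's proof.
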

\begin{proof}
First, assume $\Sigma$ has zero capacity with respect to $g$.
Let $S$ be some surface in $M$ enclosing $\Sigma$ such that $S$ and the region it bounds are contained in $U$.
Let $\Sigma_n$ converge to $\Sigma$ in $C^0$.  By 
truncating finitely many terms of the sequence,
we may assume all $\Sigma_n$ are enclosed by $S$.
Let $\varphi_n$ be $g$-harmonic, equal to $0$ on $\Sigma_n$, and tending to 1 at infinity.  Let $\epsilon_n$ be the 
minimum value attained by $\varphi_n$ on $S$.  Let $f_n^-$ and $f_n^+$ 
be functions in the region bounded by $\Sigma_n$ and $S$ that are
$g$-harmonic, equal to  $0$ on $\Sigma_n$, with $f_n^-|_S = \epsilon_n$ 
and $f_n^+|_S = 1$.  The setup is illustrated in figure \ref{fig_local_capacity}.
\begin{figure}[ht]
\caption{Functions in the proof of Proposition \ref{prop_local_mass_cap}}
\begin{center}
\includegraphics[scale=0.6]{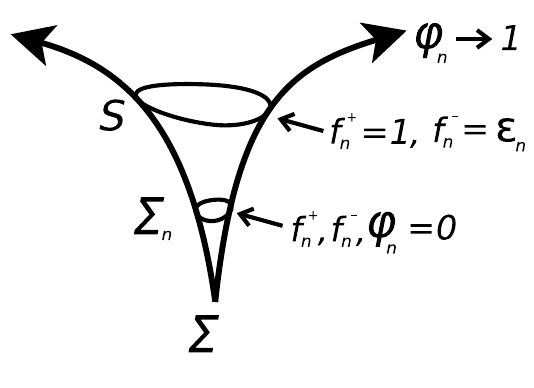}
\end{center}
\flushleft\footnotesize{This diagram illustrates the boundary values of the harmonic functions used in the 
proof of Proposition \ref{prop_local_mass_cap}.}
\label{fig_local_capacity}
\end{figure}
By the maximum 
principle, the following inequalities hold on $\Sigma_n$:
\begin{equation}
0< \nu (f_n^-) \leq \nu(\varphi_n) \leq \nu(f_n^+),
\label{capacity_ineq}
\end{equation}
where $\nu$ is the outward unit normal to $\Sigma_n$ with respect to $g$.  Integrating the first pair of inequalities over $\Sigma_n$
and using expression $(\ref{eq_cap_boundary})$ for capacity, we have
\begin{equation*}
0 < \int_{\Sigma_n} \nu(f_n^-) dA \leq C(\Sigma_n).
\end{equation*}
By assumption, $C(\Sigma_n) \to 0$ as $n \to \infty$, so $\int_{\Sigma_n} \nu(f_n^-) dA \to 0$.
But by the uniqueness of harmonic functions with identical boundary values, $f_n^- = \epsilon_n f_n^+$, so \mbox{$\epsilon_n \int_{\Sigma_n} 
\nu(f_n^+) dA 
\to 0$.}  Now, $\{\epsilon_n\}$ is an increasing sequence
by the maximum principle, so it must be that \mbox{$\int_{\Sigma_n} \nu(f_n^+) dA \to 0$.}  By integrating the first and last inequality in 
$(\ref{capacity_ineq})$
over $\Sigma_n$, we have
\begin{equation}
0 < C(\Sigma_n) \leq \int_{\Sigma_n} \nu(f_n^+) dA.
\end{equation}
We asserted that the right hand side converges to zero (implying $C(\Sigma)=0$), and this fact depends only on the data $(U,g|_{U})$.
Thus, the property of zero capacity is determined by the restriction of $g$ to $U$. This completes the first part of the proof.

For the second part, if $\Sigma$ has positive capacity, then by Theorem \ref{thm_pos_cap}, 
the mass of $\Sigma$ is $-\infty$.
However, the property of positive capacity is determined by $g|_{U}$, so the condition $m_{\text{ZAS}}(\Sigma)=-\infty$
is also determined by $g|_{U}$.  Now, we may assume $\Sigma$ has zero capacity.

From inequalities $(\ref{capacity_ineq})$ and $f_n^- = \epsilon_n f_n^+$, we have
\begin{equation}
\epsilon_n^{4/3}\int_{\Sigma_n} \nu (f_n^+)^{4/3}dA  \leq \int_{\Sigma_n} \nu (\varphi_n)^{4/3} dA 
	\leq \int_{\Sigma_n} \nu(f_n^+)^{4/3} dA.
\label{capacity_ineq4}
\end{equation}
By Lemma \ref{capacity_lemma} (below) and the fact that $\Sigma$ has zero
capacity, the sequence of numbers \{$\epsilon_n$\} converges to 1.  Taking the $\limsup$ of
$(\ref{capacity_ineq4})$, we have
\begin{equation}
\limsup_{n \to \infty} \int_{\Sigma_n} \nu (\varphi_n)^{4/3}dA = \limsup_{n \to \infty} \int_{\Sigma_n} \nu (f_n^+)^{4/3}dA.
\end{equation}
Since the right hand side depends only on $g|_{U}$,
it now follows from the definition of the mass of $\Sigma$ that $m_{\text{ZAS}}(\Sigma)$ depends only on $g|_{U}$.
\end{proof}
The following lemma will also be used in the proof of Theorem \ref{thm_zas_ineq_full}. 
\begin{lemma}
Suppose $\Sigma=\partial M$ consists of ZAS and has zero capacity. If $\{\Sigma_n\}$ is a sequence of surfaces 
converging in $C^0$ to $\Sigma$ and $\varphi_n$ is the harmonic 
function
vanishing on $\Sigma_n$ and tending to 1 at infinity, then $\varphi_n(x) \to 1$ 
pointwise in $M \sm \Sigma$, with uniform convergence on compact
subsets.  
\label{capacity_lemma}
\end{lemma}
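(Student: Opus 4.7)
The plan is to pass to $\psi_n := 1 - \varphi_n$, which is $g$-harmonic on $M \setminus \Omega_n$, equal to $1$ on $\Sigma_n$, tends to $0$ at infinity, and satisfies $0 \leq \psi_n \leq 1$ by the maximum principle. I will show $\psi_n \to 0$ uniformly on compact subsets of $M \setminus \Sigma$; the claimed pointwise convergence of $\varphi_n$ to $1$ is then immediate. By a standard subsequence argument it suffices to prove every subsequential limit equals $0$.

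For compactness, the $\psi_n$ are uniformly bounded; for any fixed compact $K \subset M \setminus \Sigma$ we have $K \subset M \setminus \Omega_n$ for all $n$ large (using $\Sigma_n \to \Sigma$ in $C^0$), so interior elliptic estimates together with Arzel\`a--Ascoli extract a subsequence converging uniformly on compact subsets of $M \setminus \Sigma$ to some harmonic $\psi_\infty : M \setminus \Sigma \to [0,1]$. To identify $\psi_\infty$ as a constant, I invoke Green's identities to write
$$\int_{M \setminus \Omega_n} |\nabla \psi_n|^2 \, dV = C(\Sigma_n),$$
which tends to $C(\Sigma) = 0$ by hypothesis. The $L^\infty$ bound together with this energy bound produces weak convergence in $H^1_{\mathrm{loc}}(M \setminus \Sigma)$ along the subsequence, and weak lower semi-continuity of the Dirichlet energy yields $\int_K |\nabla \psi_\infty|^2 = 0$ on every compact $K$; hence $\psi_\infty$ is constant on the connected interior $M \setminus \Sigma$.

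The main obstacle is identifying this constant as $0$: each $\psi_n$ decays to $0$ at infinity, but not uniformly in $n$ a priori, so the issue is producing an $n$-independent barrier. The plan is to fix a bounded open neighborhood $\Omega_0 \supset \Sigma$ with smooth compact boundary $S_0$, chosen so that $\Sigma_n \subset \Omega_0$ for all $n$ large, and on the asymptotically flat region $M \setminus \Omega_0$ let $h$ be the bounded $g$-harmonic function with $h|_{S_0} = 1$ and $h \to 0$ at infinity (existence is standard, e.g.\ by exhausting $M \setminus \Omega_0$ with large coordinate balls and using the $3$-dimensional transience of the asymptotic end). Applying the maximum principle in $\Omega_0 \setminus \Omega_n$ gives $\psi_n \leq 1$ on $S_0$, and then applying it on $M \setminus \Omega_0$ (where both $\psi_n$ and $h$ are harmonic, with $\psi_n \leq h$ on $S_0$ and both tending to $0$ at infinity) gives $\psi_n \leq h$ throughout that region. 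Since $h$ is fixed and $\sup_{|x| \geq R} h \to 0$ as $R \to \infty$ in asymptotic coordinates, passing $n \to \infty$ along the subsequence on a large sphere $\{|x| = R\}$ shows $\psi_\infty \leq \sup_{|x| \geq R} h$, and letting $R \to \infty$ forces $\psi_\infty \leq 0$; combined with $\psi_\infty \geq 0$ this gives $\psi_\infty \equiv 0$. Since every subsequence contains a further subsequence converging to $0$ uniformly on compact subsets, the full sequence $\psi_n \to 0$ uniformly on compact subsets of $M \setminus \Sigma$, which is the desired conclusion.
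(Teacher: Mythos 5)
Your proof is correct, and its core mechanism is the same as the paper's: zero capacity forces the Dirichlet energy of any limit to vanish, so the limit is constant on the connected set $M \sm \Sigma$, and the constant is then pinned down by the behavior at infinity. Where you differ is in how convergence and the value of the constant are obtained. The paper assumes (``without loss of generality'') that the $\Sigma_n$ are nested, so that $\{\varphi_n\}$ is an increasing sequence of harmonic functions bounded by $1$; monotone convergence plus Harnack-type estimates then gives the limit, and the constant is identified as $1$ because the limit dominates $\varphi_1$, which already tends to $1$ at infinity. You instead handle an arbitrary (possibly non-nested) sequence directly: elliptic estimates and Arzel\`a--Ascoli give subsequential limits, weak lower semicontinuity of the energy makes each limit constant, and a fixed harmonic barrier $h$ with $h|_{S_0}=1$ and $h\to 0$ at infinity forces that constant to be $0$ for $\psi_n=1-\varphi_n$; the subsequence principle then upgrades this to convergence of the full sequence. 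Your route is slightly longer but is, if anything, more complete --- the paper's ``WLOG nested'' reduction is left unjustified for a general $C^0$-convergent sequence, whereas your argument needs no such reduction, and your barrier step makes explicit the identification of the constant that the paper asserts with the phrase ``in fact must be identically equal to 1.''
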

\begin{proof}
Let $K \subset\subset M \sm \Sigma$. Then for all $n$ sufficiently large, $K$ is 
contained in the region exterior
to $\Sigma_n$.  Without loss of generality, assume $\Sigma_n$ encloses $\Sigma_{n+1}$ 
for all $n$.  Then by the maximum principle, $\{\varphi_n\}$
is an increasing sequence of harmonic functions on $K$, each bounded between $0$ and $1$ and tending to 1 at infinity.
Thus, $\{\varphi_n\}$ converges uniformly on $K$ (indeed, in any $C^k$) to a harmonic function $\varphi$.  Since 
$\Sigma$ has zero capacity, $\lim_{n \to \infty} \int_{K} 
|\nabla \varphi_n|^2 dV = 0$, so $\int_K |\nabla \varphi|^2 dV=0$.  So $\varphi$ is constant on $M \sm \Sigma$
and must identically equal 1.
\end{proof}
Having shown that the ZAS mass is a local notion, we have the following corollary, which improves
Proposition \ref{prop_masses_agree}.
\begin{cor}
If $\Sigma=\partial M$ consists of harmonically regular ZAS, then 
$m_{\text{\emph{reg}}}(\Sigma) = m_{\text{ZAS}}(\Sigma)$.
\label{cor_masses_agree}
\end{cor}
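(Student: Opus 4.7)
The plan is to establish the nontrivial direction $m_{\text{ZAS}}(\Sigma) \geq m_{\text{reg}}(\Sigma)$ by exhibiting a single sequence that realizes the supremum in Definition \ref{def_zas_mass}; the reverse inequality is free from Proposition \ref{prop_reg_finite_mass}, since harmonically regular implies regular. The natural candidate is the sequence used in Proposition \ref{prop_masses_agree}: level sets of the harmonic resolution function. Because a \emph{global} harmonic resolution is not assumed to exist, I will adapt the argument of Proposition \ref{prop_masses_agree} by substituting the global $\ol g$-harmonicity of $\ol \varphi$ (used there to compare two harmonic functions via uniqueness) with the zero-capacity property of regular ZAS (Proposition \ref{prop_regular_zas_zero_capacity}) together with Lemma \ref{capacity_lemma}.

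Pick local harmonic resolutions $(\ol g_i, \ol \varphi_i)$ around each component $\Sigma^i$, defined on pairwise disjoint neighborhoods $U_i$, and combine them into a pair $(\ol g, \ol \varphi)$ on $U = \bigcup_i U_i$. Take $\Sigma_n = \{\ol \varphi = a_n\}$ for a decreasing sequence $a_n \searrow 0$ with $\Sigma_n \subset U$; these are smooth and, because $\ol \nu(\ol \varphi) > 0$, they converge to $\Sigma$ in $C^\infty$. Let $\varphi_n$ denote the $g$-harmonic function on $E_n := M \sm \Omega_n$ that vanishes on $\Sigma_n$ and tends to $1$ at infinity. The change-of-variables computation in the proof of Proposition \ref{prop_masses_agree} uses only the local conformal relation $g = \ol \varphi^4 \ol g$ on $U$ and the vanishing of $\varphi_n$ on $\Sigma_n$, so it applies verbatim here to yield
\begin{equation*}
m_{\text{reg}}(\Sigma_n) = -\tfrac{1}{4}\Big(\tfrac{1}{\pi}\int_{\Sigma_n} \big(\ol \nu(\ol \varphi \varphi_n)\big)^{4/3}\, d\ol A\Big)^{3/2}.
\end{equation*}
By the conformal Laplacian formula $(\ref{eq_conf_laplacian})$, since $\ol \varphi$ is $\ol g$-harmonic on $U$ and $\varphi_n$ is $g$-harmonic on $E_n$, the product $\ol \varphi \varphi_n$ is $\ol g$-harmonic on $U \cap E_n$; meanwhile $(\ol \varphi - a_n)/(1-a_n)$ is $\ol g$-harmonic on all of $U$, and the two functions agree on $\Sigma_n$ (both vanish there).

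Fix a smooth surface $S \subset U$ enclosing $\Sigma$ with the closed region it bounds contained in $U$; for all large $n$, $\Sigma_n$ lies strictly inside $S$. By Proposition \ref{prop_regular_zas_zero_capacity} the regular ZAS $\Sigma$ has zero capacity, so Lemma \ref{capacity_lemma} gives $\varphi_n \to 1$ uniformly on $S$. Hence the difference
\begin{equation*}
h_n := \ol \varphi \varphi_n - \frac{\ol \varphi - a_n}{1-a_n}
\end{equation*}
is $\ol g$-harmonic in the annular region between $\Sigma_n$ and $S$, vanishes on $\Sigma_n$, and converges uniformly to $0$ on $S$. The maximum principle yields $h_n \to 0$ uniformly on this annulus, and because the surfaces $\Sigma_n$ have uniformly bounded smooth geometry (being level sets of the fixed smooth function $\ol \varphi$ with gradient bounded away from zero near $\Sigma$), standard boundary regularity for the Dirichlet problem in a uniform collar of $\Sigma_n$ gives $\ol \nu(h_n) \to 0$ uniformly on $\Sigma_n$. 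Thus $\ol \nu(\ol \varphi \varphi_n) = \ol \nu(\ol \varphi)/(1-a_n) + \ol \nu(h_n) \to \ol \nu(\ol \varphi)$ uniformly on $\Sigma_n$, and passing to the limit in the displayed formula (using $\Sigma_n \to \Sigma$ in $C^1$) gives $\lim_n m_{\text{reg}}(\Sigma_n) = m_{\text{reg}}(\Sigma)$. The main obstacle is the uniform normal-derivative estimate for $h_n$ on the moving boundary $\Sigma_n$; this is where $C^1$ convergence in the definition of $m_{\text{ZAS}}$, together with the uniform smoothness of the level sets of $\ol \varphi$, is essential.
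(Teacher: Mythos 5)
Your proof is correct, but it takes a genuinely different route from the paper's. The paper's proof is a two-line reduction: it invokes the locality argument of Proposition \ref{prop_local_mass_cap}, which shows (via the maximum-principle sandwich $f_n^- \leq \varphi_n \leq f_n^+$ and the identity $f_n^- = \epsilon_n f_n^+$ with $\epsilon_n \to 1$ by Lemma \ref{capacity_lemma}) that the purely local harmonic functions $f_n^+$ (vanishing on $\Sigma_n$, equal to $1$ on a fixed surface $S$ inside the resolution neighborhood) may be substituted for the global $\varphi_n$ when computing $m_{\text{ZAS}}(\Sigma)$; one then chooses $S$ to be a level set of $\ol\varphi$ and runs the uniqueness-of-harmonic-functions argument of Proposition \ref{prop_masses_agree} verbatim on the compact annulus, where $\ol\varphi f_n^+$ and the appropriate multiple of $\ol\varphi - a_n$ have \emph{identical} boundary data and are therefore equal. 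You instead keep the global $\varphi_n$ and control the discrepancy $h_n = \ol\varphi\varphi_n - (\ol\varphi - a_n)/(1-a_n)$ directly: Lemma \ref{capacity_lemma} makes $h_n$ uniformly small on $S$, the maximum principle makes it small on the annulus, and a uniform boundary $C^1$ estimate on the moving domains converts $C^0$ smallness into smallness of $\ol\nu(h_n)$ on $\Sigma_n$. Both arguments hinge on the same two inputs (zero capacity of regular ZAS and Lemma \ref{capacity_lemma}), but the paper's comparison of normal derivatives comes for free from the ordering of functions vanishing on a common boundary, whereas yours requires the uniform boundary Schauder estimate on the varying annuli — a standard but heavier tool, and the one step you should spell out (uniformity of the constant follows, as you say, from the $\Sigma_n$ being level sets of the fixed function $\ol\varphi$ with nonvanishing gradient and from their distance to $S$ being bounded below). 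Your explicit use of Proposition \ref{prop_reg_finite_mass} for the inequality $m_{\text{ZAS}}(\Sigma)\leq m_{\text{reg}}(\Sigma)$ is also a point in your favor: it makes precise why it suffices to exhibit a single sequence attaining $m_{\text{reg}}(\Sigma)$, a step the paper leaves implicit.
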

\begin{proof}
The proof of Proposition \ref{prop_local_mass_cap} shows that for the purposes of computing the mass
of $\Sigma$, the functions $\{f_n^+\}_{n=1}^\infty$ may be used in lieu of $\{\varphi_n\}_{n=1}^\infty$ 
(where $f_n^+$ is harmonic, $0$ on $\Sigma_n$ and 1 on a fixed surface $S$).  This was pointed out in \cite{robbins}. Choose $S=\partial \Omega$
so that $\Omega$ is contained in the domain of a local harmonic resolution, and proceed as in Proposition
\ref{prop_masses_agree}.
\end{proof}
Thus, we may interchangeably use the terms ``mass'' and ``regular 
mass'' for harmonically regular ZAS.

\section{Examples: spherically symmetric ZAS and removable singularities}
\label{sec_examples}
This section involves many computational details; some readers may wish to
skip to section \ref{sec_main}, which includes the main theorems.   Many of the calculations below were also carried out by Robbins \cite{robbins}.

Here, we consider $M=S^2 \times [0, \infty)$ with a spherically 
symmetric metric $g$ given by
\begin{equation}
ds^2 = dr^2 + \frac{A(r)}{4\pi} d\sigma^2,
\label{metric_spher_symm}
\end{equation}
where $r$ is the geodesic distance along the axis of symmetry, $A(r)$ is smooth and positive for $r > 0$ and extends continuously to $0$ at $r=0$,
and $d\sigma^2$ is the round metric on the sphere of radius 1.  By construction, the value of $A(r)$ equals the area of the sphere $S^2 \times 
\{r\}$.
Assume that $g$ is asymptotically flat, so that $A(r)$ is asymptotic to $4\pi r^2$. (In particular, $\int_{1}^\infty \frac{dr}{A(r)} < \infty$.)
By Proposition \ref{prop_zas_criteria}, $\Sigma = S^2 \times \{0\}$ is a ZAS of $g$.  

\subsection{Capacity}
We now derive an explicit expression for the capacity of $\Sigma$.
The Laplacian with respect to $g$ acting on functions of $r$ is:
\begin{equation}
\Delta f(r) = f''(r) + \frac{A'(r)}{A(r)} f'(r).
\label{eq_sph_symm_laplacian}
\end{equation}
It follows that the harmonic function vanishing on $S_\rho:=S^2 \times \{\rho\}$ (with $\rho > 0$) and tending to 1 at infinity is given by
\begin{equation*}
\varphi_\rho(r) = \frac{1}{\int_\rho^\infty \frac{dr}{A(r)}} \int_\rho^r \frac{dr}{A(r)}.
\end{equation*}
The improper integral is finite, by the asymptotic behavior of $A(r)$.  Now, the capacity of $S_\rho$ is
\begin{align}
C(S_\rho) &= \int_{S_\rho} \nu (\varphi_\rho) dA &&\text{(expression $(\ref{eq_cap_boundary})$ for capacity)}\nonumber\\
	  &= \int_{S_\rho} \frac{1}{\int_\rho^\infty \frac{dr}{A(r)}} \frac{1}{A(\rho)} dA
		&&\left(\nu = \frac{\partial}{\partial r}\right)\nonumber\\
	  &= \left(\int_{\rho}^\infty \frac{dr}{A(r)}\right)^{-1}
		&&\text{($|S_\rho|_g = A(\rho)$)}\label{eq_sphr_symm_cap}.
\end{align}
In particular, the capacity of $\Sigma$ is given by
\begin{equation*}
C(\Sigma) = \lim_{\rho \to 0^+} C(S_\rho) = \left(\int_0^\infty \frac{dr}{A(r)}\right)^{-1}.
\end{equation*}
Thus, we see that the capacity of $\Sigma$ is zero if and only if the improper integral $\int_0^\infty \frac{dr}{A(r)}$ is infinite (which
holds if and only if $\int_0^\epsilon \frac{dr}{A(r)}$ is infinite for all $\epsilon > 0$).  Otherwise, the
capacity is positive and finite.  

\subsection{Explicit examples}
In this section assume $A(r)$ is given by
\begin{equation}
A(r) = 4\pi r^\alpha, \qquad \text{for } 0< r \leq 1,
\label{eq_ar}
\end{equation}
where $\alpha > 0$ is a constant.  We need only define $A(r)$ on an interval, since the examples of this section 
are purely local.  Our goal is to fill in Table \ref{table_examples}, which shows the mass and capacity of 
the ZAS of this metric for the various values of $\alpha$, as well as whether or not each ZAS is regular or 
removable (explained below).

\begin{table}
\caption{ZAS data for metrics of the form $ds^2=dr^2+r^\alpha d\sigma^2$}
\label{table_examples}
\begin{tabular}{@{}lllll@{}} \toprule
range of $\alpha$ & capacity of $\Sigma$ & mass of $\Sigma$ & regular ZAS & 
 removable singularity\\  \midrule
$0 < \alpha < 1$ & positive & $-\infty$ & no &no\\
$1 \leq \alpha <\frac{4}{3}$ & zero &$-\infty$& no &no\\
$\alpha =\frac{4}{3}$& zero &$-\frac{2}{9}$& yes &no\\
$\frac{4}{3} < \alpha < 2$& zero & zero & no &no\\
$\alpha=2$& zero & zero & no &yes\\
$2 < \alpha < \infty$& zero & zero & no &no\\ \bottomrule
\end{tabular}\\
\flushleft\footnotesize{For each $\alpha > 0$, the spherically symmetric metric $ds^2=dr^2 + r^\alpha d\sigma^2$ has a ZAS at $r=0$.
The above table gives the following properties of this ZAS for each possible value of $\alpha$: the sign of the capacity, the mass,
whether or not the ZAS is regular, and whether or not the ZAS is a removable singularity.}
\end{table}

By the above discussion of capacity, we see that $C(\Sigma)>0$ if and only if $\alpha < 1$.
By Theorem \ref{thm_pos_cap}, if $0< \alpha < 1$, then $m_{\text{ZAS}}(\Sigma) = -\infty$.  We now determine
the mass of $\Sigma$ assuming $\alpha \geq 1$.  The first step is to compute the regular masses of the concentric spheres $S_\rho$:
\begin{align}
m_{\text{reg}}(S_\rho) &= - \frac{1}{4}\left( \frac{1}{\pi} \int_{S_\rho} \nu(\varphi_\rho)^{4/3}dA\right)^{3/2}\nonumber\\
	&= - \frac{1}{4\pi^{3/2}}\left(\int_{S_\rho} \left(\frac{1}{\int_\rho^\infty 
		\frac{dr}{A(r)}} \frac{1}{A(\rho)}\right)^{4/3}dA\right)^{3/2}\nonumber\\
	&= - \frac{1}{4\pi^{3/2}}\left(\frac{A(\rho)^{-1/3}}{\left(\int_\rho^\infty 
		\frac{dr}{A(r)}\right)^{4/3}}\right)^{3/2}\nonumber\\
	&= - \frac{1}{4\pi^{3/2}}\left(\frac{A(\rho)^{-1/4}}{\int_\rho^\infty 
		\frac{dr}{A(r)}}\right)^{2}. \label{eq_sph_symm_cap}
\end{align}
We can use L'Hopital's rule to evaluate this limit (of the form $\frac{\infty}{\infty}$) as $\rho \to 0$:
\begin{align}
\lim_{\rho \to 0+} m_{\text{reg}}(S_\rho)
	&= - \frac{1}{4\pi^{3/2}}\left(\lim_{\rho \to 0^+}\frac{-\frac{1}{4}A(\rho)^{-5/4}A'(\rho)}{ 
		-\frac{1}{A(\rho)}}\right)^{2}\nonumber\\
	&= - \frac{1}{64\pi^{3/2}}\left(\lim_{\rho \to 0^+}A(\rho)^{-1/4}A'(\rho)\right)^{2}\nonumber\\
	&= - \frac{1}{64\pi^{3/2}}\left(\lim_{\rho \to 
		0^+}(4\pi)^{3/4}\alpha\rho^{3\alpha/4-1}\right)^{2}.\label{m_reg_limit}
\end{align}
This limit is $-\infty$ if $1 \leq \alpha < 4/3$, finite but nonzero if $\alpha = 4/3$, and zero if $\alpha > 4/3$.  For $\alpha > 4/3$,
this shows that the mass of $\Sigma$ is zero.  (It is at least zero by our computation, but it is a priori at most zero.)  For $1 \leq \alpha < 
4/3$, we claim the mass of $\Sigma$ is $-\infty$.

Given any sequence of surfaces $\{\Sigma_n\}$ converging in $C^1$ to $\Sigma$, we compare 
them to a sequence of round spheres. Let $\rho_n>0$ be the minimum value
of the $r$-coordinate that $\Sigma_n$ attains; then $\Sigma_n$ encloses $S_{\rho_n}$, so 
$C(\Sigma_n) 
\geq C(S_{\rho_n})$.  Also, since
$\Sigma_n$  converges in $C^1$, the ratio of areas $a_n := \frac{|\Sigma_n|}{A(\rho_n)}$
converges to 1.
Using the proof of
Theorem \ref{thm_pos_cap}, we can estimate the regular mass of $\Sigma_n$:
\begin{align*}
m_{\text{reg}}(\Sigma_n) &\leq -\frac{1}{4\pi^{3/2}} |\Sigma_n|_g^{-1/2}\, C(\Sigma_n)^2\\
	&= -\frac{1}{4\pi^{3/2}}a_n^{-1/2}A(\rho_n)^{-1/2} C(S_{\rho_n})^2\\
	&= a_n^{-1/2} m_{\text{reg}}(S_{\rho_n}),
\end{align*}
where the last equality comes from equations $(\ref{eq_sphr_symm_cap})$ and $(\ref{eq_sph_symm_cap})$.
Taking $\limsup_{n \to \infty}$ of both sides (and using $\rho_n \to 0$), we have
$\limsup_{n \to \infty} m_{\text{reg}}(\Sigma_n) = -\infty$.  (We showed above that this limit 
is $-\infty$ for concentric round spheres.)
Thus, $m_{\text{ZAS}}(\Sigma) = -\infty$ if $1 \leq \alpha < 4/3$.

For $\alpha=4/3$,  we show below that $\Sigma$ is harmonically regular.  By Corollary \ref{cor_masses_agree}, its mass is 
given by $(\ref{m_reg_limit})$, which evaluates to $-\frac{2}{9}$.  We now determine the values of $\alpha$
for which $\Sigma$ is a regular ZAS.
\begin{lemma}
The ZAS $\Sigma$ of the metric $ds^2 = dr^2 + \frac{A(r)}{4\pi} d\sigma^2$ with $A(r)$ given by
$(\ref{eq_ar})$ is regular if and only if $\alpha = 4/3$.  In this case, $\Sigma$ is harmonically 
regular.
\end{lemma}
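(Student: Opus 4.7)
The plan is to treat the two directions separately: construct a local harmonic resolution when $\alpha = 4/3$, and use Proposition \ref{prop_H43} to rule out regularity when $\alpha \neq 4/3$. The second direction is the interesting one, since a priori one might worry about non-symmetric resolutions; the point is that Proposition \ref{prop_H43} lets us detect regularity from a single $C^2$-convergent test sequence, which we take to be the concentric round spheres $S_\rho$.

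For $\alpha = 4/3$, I would introduce the substitution $\bar r = r^{1/3}$ (so $dr = 3\bar r^2\,d\bar r$), and set
\begin{equation*}
\bar g = d\bar r^2 + \tfrac{1}{9}\,d\sigma^2, \qquad \bar \varphi = \sqrt{3}\,\bar r.
\end{equation*}
A direct substitution gives $\bar\varphi^4 \bar g = 9\bar r^4\,d\bar r^2 + \bar r^4\,d\sigma^2 = dr^2 + r^{4/3}\,d\sigma^2 = g$. The metric $\bar g$ is a smooth (cylindrical) product metric defined on all of $U = S^2 \times [0,\epsilon)$, the function $\bar\varphi$ is smooth, vanishes precisely on $\Sigma=\{\bar r=0\}$, and $\bar\nu(\bar\varphi) = \sqrt{3} > 0$. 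Since $\bar A(\bar r) = 4\pi/9$ is constant, formula $(\ref{eq_sph_symm_laplacian})$ reduces the Laplacian $\bar\Delta$ acting on radial functions to $\partial_{\bar r}^2$, so $\bar\Delta\bar\varphi = 0$. Hence $(\bar g,\bar\varphi)$ is a local harmonic resolution and $\Sigma$ is harmonically regular.

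For $\alpha \neq 4/3$, I would argue by contradiction: suppose $\Sigma$ is regular, with local resolution $(\bar g,\bar\varphi)$. Since $\bar\nu(\bar\varphi) > 0$ everywhere on $\Sigma$, formula $(\ref{def_reg_mass})$ shows $m_{\text{reg}}(\Sigma)$ is a strictly negative, finite real number. By Proposition \ref{prop_H43}, applied to the sequence $\{S_{\rho_n}\}$ (which converges in $C^k$ for every $k$ to $\Sigma$ in the underlying smooth structure),
\begin{equation*}
-m_{\text{reg}}(\Sigma) = \lim_{\rho \to 0^+}\left(\frac{1}{16\pi}\int_{S_\rho} H^{4/3}\,dA\right)^{3/2}
\end{equation*}
must be a strictly positive real number. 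A direct computation in the warped product $g = dr^2 + r^\alpha\,d\sigma^2$ gives $H = \alpha/r$ on $S_r$ and $|S_r|_g = 4\pi r^\alpha$, so
\begin{equation*}
\left(\frac{1}{16\pi}\int_{S_\rho} H^{4/3}\,dA\right)^{3/2} = \frac{\alpha^2}{8}\,\rho^{(3\alpha-4)/2}.
\end{equation*}
This limit is $+\infty$ when $\alpha < 4/3$ and $0$ when $\alpha > 4/3$, contradicting the finiteness and strict positivity required above. Therefore $\Sigma$ is not regular for $\alpha \neq 4/3$.

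The main subtlety to watch for is the logical use of Proposition \ref{prop_H43}: it only tells us that \emph{if} $\Sigma$ is regular \emph{then} every $C^2$-convergent test sequence produces the same finite limit equal to $-m_{\text{reg}}(\Sigma)$. Thus a single bad test sequence (here, the round spheres) suffices to obstruct regularity, and we do not need to analyze non-spherically symmetric putative resolutions. Conversely, one could attempt a direct classification of spherically symmetric resolutions via an ansatz $\bar r = r^\beta$ --- this forces $\beta = 1/3$ from the condition $\bar\nu(\bar\varphi) > 0$ and $\beta = 1 - \alpha/2$ from smoothness of $\bar g$ --- but this direct route does not by itself exclude more exotic resolutions, so the Proposition \ref{prop_H43} route is preferable.
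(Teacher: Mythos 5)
Your proof is correct, and the two directions split as follows. The $\alpha=4/3$ construction is essentially the paper's: both exhibit the cylinder-plus-linear-function resolution (the paper normalizes the cross-sections to area $4\pi$, getting $\ol\varphi(\ol r)=\tfrac13\ol r$; your normalization to area $4\pi/9$ with $\ol\varphi=\sqrt3\,\ol r$ is the same resolution up to a constant conformal rescaling). The non-regularity direction is where you genuinely diverge. The paper argues directly on the putative resolution: it asserts that by spherical symmetry one may take $\ol\varphi=\ol\varphi(r)$, conformally normalizes $\ol g$ to a round cylinder so that $\ol\varphi=r^{\alpha/4}$ is forced, and then checks that $\ol\nu(\ol\varphi)=\tfrac{\alpha}{4}r^{3\alpha/4-1}$ fails to be positive and finite at $r=0$ unless $\alpha=4/3$. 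Your route instead uses Proposition \ref{prop_H43} as a resolution-independent obstruction: regularity would force $\lim_{\rho\to0^+}\bigl(\tfrac{1}{16\pi}\int_{S_\rho}H^{4/3}\,dA\bigr)^{3/2}=-m_{\text{reg}}(\Sigma)$ to be finite and strictly positive, and the explicit value $\tfrac{\alpha^2}{8}\rho^{(3\alpha-4)/2}$ (whose exponent and constant agree with the paper's computation $(\ref{m_reg_limit})$) rules this out for $\alpha\neq 4/3$. What your approach buys is precisely the point you flag: it sidesteps the reduction to spherically symmetric resolutions, which the paper invokes without detailed justification and which Lemma \ref{lemma_smooth_extension} alone does not supply (that lemma compares two objects already known to be resolutions). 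Since Proposition \ref{prop_H43} was proved for an arbitrary local resolution, a single bad test sequence of round spheres legitimately obstructs all resolutions, symmetric or not. The cost is that your argument leans on Proposition \ref{prop_H43} as a black box, whereas the paper's computation is self-contained; but logically yours is the tighter argument for the ``only if'' direction.
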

\begin{proof}
We find a necessary condition for $\Sigma$ to be regular.  Assume $(\ol 
g, \ol \varphi)$ is some local resolution; by spherical symmetry we may assume $\ol \varphi$ and $\ol g$ depend 
only on $r$.  By applying a conformal transformation, we may assume that the spheres $S^2\times 
\{\rho\}$ have constant area $4\pi$ in $\ol g$ (i.e., $\ol \varphi^{-4}(r) 
A(r) \equiv 4\pi$).  (In other words, the metric $\ol g$ is that of a round cylinder
with spherical cross sections.)
Then
\begin{equation*}
\ol \varphi(r) =  \left(\frac{A(r)}{4\pi}\right)^{1/4} = r^{\alpha/4}.
\end{equation*}
A necessary condition for a local resolution is that $\ol \nu(\ol \varphi)$ is positive and finite 
on $\Sigma$.  Compute this normal derivative:
\begin{equation*}
\ol\nu(\ol \varphi) =\ol \varphi^2 \frac{\partial}{\partial r}\ol \varphi(r)
	=\frac{\alpha}{4}r^{3\alpha/4-1}.
\end{equation*}
This is positive and finite in the limit $r \to 0$ if and only if $\alpha = 4/3$.  Thus, 
$\alpha=4/3$ is necessary for the existence of a local resolution.

If $\alpha=4/3$, then $\ol \varphi(r) = r^{1/3}$.  A calculation shows that the arc length parameter 
for $\ol g$ is given as $\ol r = 3r^{1/3}$.  Thus, $\ol \varphi(\ol r) = \frac{1}{3}\ol r$, which 
is smooth on $[0, 1)$; moreover, $\ol \varphi$ is $\ol g$-harmonic.  Thus, $(\ol g, \ol \varphi)$ 
is a local 
harmonic resolution.
\end{proof}
Evidently the case $\alpha = 4/3$ is special.  It is left to the reader to show that if
the Schwarzschild ZAS metric is written in the form $(\ref{metric_spher_symm})$, then to first order,
$A(s)$ is given by a constant times $s^{4/3}$, where $s$ is the distance to $\Sigma$.

\subsection{Removable singularities}
\label{sec_removable}
For our purposes a ``removable singularity'' is a point deleted from the interior of a smooth Riemannian manifold.  Such
singularities can be viewed as ZAS.  For example
the manifold $\R^3 \sm \{0\}$ with the flat metric $\delta$ has its interior ``boundary'' as a
ZAS.  To see this, let $M = S^2 \times [0,\infty)$
with the flat metric $ds^2 = dr^2 + r^2 d\sigma^2$ on its interior, which is isometric to $\R^3 \sm \{0\}$.
Clearly the boundary $\Sigma := S^2 \times \{0\}$ is a ZAS.  It is not difficult to see that 
the ZAS $\Sigma$ of the spherically symmetric metric $(\ref{eq_ar})$ is removable if and only if $\alpha=2$, which
corresponds to a deleted point in flat space.

In fact, it is straightforward to show that a collection of removable singularities has zero mass; see \cite{jauregui}.
(The key tool is the existence of a harmonic function that blows up at the removable singularities.) However, not all 
ZAS with zero mass are removable, as seen in earlier examples.

There is also a notion of a ``removable $S^1$ singularity''; this occurs when an embedded circle is removed from the interior
of a smooth Riemannian manifold.  The resulting space has a zero area singularity that is topologically a 2-torus.  It is
left to the reader to verify that removable $S^1$ singularities have mass equal to $-\infty$ (at least for a circle deleted
from $\R^3$).  Thus, our definition of mass is not well-adapted to studying these types of singularities.

\subsection{A regular ZAS that is not harmonically regular}
In this section, we prove by example that there exists a 
ZAS that has a local resolution but no local harmonic resolution; we begin by showing that the problem of finding a local harmonic 
resolution is equivalent to solving a linear elliptic PDE. 
Let $(\ol g, \ol \varphi)$ be some local resolution of a ZAS $\Sigma^0$.
Suppose for now that there exists
a local harmonic resolution $(\tilde g, \tilde \varphi)$ of $\Sigma^0$ defined on a 
neighborhood $U$.  Set
$u=\frac{\ol \varphi}{\tilde \varphi}$, which is smooth and positive on $M$ by Lemma
\ref{lemma_smooth_extension}.  Also, $\tilde g = u^4 \ol g$.  Apply
formula $(\ref{eq_conf_laplacian})$ from appendix B (with $\tilde \varphi$ playing
the role of $\phi$).  This leads to the equation
\begin{equation*}
\ol \Delta \ol \varphi = \tilde \varphi \ol \Delta \left(\frac{\ol \varphi}{\tilde 
\varphi}\right),
\end{equation*}
since $\tilde \varphi$ is assumed to be $\tilde g$-harmonic.  
Then $u$ satisfies $\ol \Delta u = fu$, where $f = \frac{\ol \Delta \ol\varphi}{\ol 
\varphi}$.  Then $u$ satisfies the linear elliptic PDE
\begin{equation}
\begin{cases}
Lu=0,& \text{in } U\\
			  u >0,& \text{on } \Sigma^0
\end{cases}
\label{eq_local_harmonic_res}
\end{equation}
where $L:= \ol \Delta - f$.  On the other hand, if $u$ is some positive solution to 
$(\ref{eq_local_harmonic_res})$ that is smooth up to and including the boundary,
then the above discussion implies that $(\tilde g, \tilde \varphi)$ is a local
harmonic resolution of $\Sigma^0$, where $\tilde \varphi := \frac{\ol \varphi}{u}$ and $\tilde 
g := u^4 \ol g$.

Note that solutions of $(\ref{eq_local_harmonic_res})$ may lose regularity at the boundary if, for
some local resolution $(\ol g, \ol \varphi)$, the function $\frac{\Delta \ol \varphi}{\ol \varphi}$ is not smooth 
at the boundary.  Now we exhibit such an example.  

Consider a spherically symmetric metric $\ol g$ given by $ds^2 = dr^2 + e^{r}d\sigma^2$ and a 
function $\ol\varphi(r)=r$, both for $0\leq r \leq 1$.  Then $(\ol g, \ol \varphi)$ is a local 
resolution of the ZAS of $g:= \ol\varphi^4 \ol g$.  We assume that if a local harmonic 
resolution exists, it is also spherically symmetric.  By equation $(\ref{eq_sph_symm_laplacian})$,
the Laplacian on functions of $r$ is given by
\begin{equation*}
\ol\Delta \psi(r) = \psi''(r) + \psi'(r).
\end{equation*}
Thus, for our choice of $\ol \varphi(r)$, we have $\frac{\ol \Delta \ol \varphi}{\ol 
\varphi}=\frac{1}{r}$, so equation $(\ref{eq_local_harmonic_res})$ becomes
\begin{equation*}
u''(r) + u'(r) - \frac{u(r)}{r} = 0,
\end{equation*}
where $u$ is the quotient of $\ol \varphi$ and the unknown harmonic resolution function.
The general solution of this second order linear ODE on $(0,1)$ is given by
\begin{equation*}
u(r)=C_1 r + C_2\left(-e^{-r} + r \int_r^\infty \frac{e^{-t}}{t} dt\right)
\end{equation*}
where $C_1$ and $C_2$ are arbitrary constants.  To satisfy the condition $u(0)>0$, it must be 
that $C_2 \neq 0$.  However, in this case, $u'(r)$ does not extend smoothly to zero:
\begin{equation*}
u'(r) = C_1 + C_2\int_r^\infty \frac{e^{-t}}{t}dt,
\end{equation*}
which diverges at $r=0$. Thus, there exists no solution of $(\ref{eq_local_harmonic_res})$ for this 
choice of $g$ obeying the necessary boundary conditions and extending smoothly to the boundary.  
In other words, the ZAS $\Sigma$ admits no local harmonic resolution.

\section{The Riemannian ZAS inequality}
\label{sec_main}
In this section we prove the two main theorems of this paper, stated below.
Both were introduced and proved in \cite{bray_npms} (except the case of equality in Theorem \ref{thm_zas_ineq}, which is a new
result).
The proofs rely on a certain unproven conjecture, explained in
section \ref{sec_conf_conj}.

\subsection{The main theorems}
\begin{thm}[Riemannian ZAS inequality, harmonically regular case]
Suppose $g$ is an asymptotically flat metric on $M \sm \partial M$ of nonnegative scalar curvature such that
all components of the boundary $\Sigma=\partial M$ are ZAS.  Assume there exists a global 
harmonic resolution $(\ol g, \ol \varphi)$ of $\Sigma$. Also assume Conjecture \ref{conj_conformal} (below) holds.
Then the ADM mass $m$ of $(M,g)$ satisfies
\begin{equation}
m \geq m_{\text{ZAS}}(\Sigma),
\label{zas_ineq}
\end{equation}
where $m_{\text{ZAS}}(\Sigma)$ is as in Definition \ref{def_zas_mass}.
Equality holds in $(\ref{zas_ineq})$ if and only if $(M,g)$ is a Schwarzschild ZAS of mass $m<0$.
\label{thm_zas_ineq}
\end{thm}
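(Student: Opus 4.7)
The plan is to reduce the ZAS Inequality to the Riemannian Penrose Inequality applied to a carefully chosen conformal modification of the resolution metric, closing the algebra with H\"older's inequality on $\Sigma$.

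First, because $\ol\varphi$ is $\ol g$-harmonic and $g = \ol\varphi^{4}\ol g$, the conformal transformation formula for scalar curvature in dimension three reduces to $R_g = \ol\varphi^{-4} R_{\ol g}$; combined with $R_g \ge 0$, this forces $R_{\ol g}\ge 0$ on $M\setminus \Sigma$, hence on all of $M$ by continuity of $R_{\ol g}$. Thus $(M,\ol g)$ is complete, asymptotically flat, and has nonnegative scalar curvature. The maximum principle gives $\ol\varphi\in[0,1]$, so at infinity $\ol\varphi = 1 - |A|/r + O(r^{-2})$ for some $|A|\ge 0$; the conformal-change formula for ADM mass gives $m = \ol m - 2|A|$, while harmonicity and the divergence theorem yield $\int_\Sigma \ol\nu(\ol\varphi)\, d\ol A = 4\pi |A|$.

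Next, I introduce the auxiliary conformal factor $u = 2-\ol\varphi$, which is positive, $\ol g$-harmonic, equals $2$ on $\Sigma$, and tends to $1$ at infinity. The metric $\tilde g := u^{4}\ol g$ is complete, asymptotically flat with nonnegative scalar curvature, satisfies $|\Sigma|_{\tilde g} = 16\,|\Sigma|_{\ol g}$, and has ADM mass $\tilde m = \ol m + 2|A| = m + 4|A|$. In the Schwarzschild ZAS case $\tilde g$ is precisely the positive-mass Schwarzschild metric of mass $|m|$, with $\Sigma$ its minimal, outward-minimizing horizon; in general I would invoke Conjecture \ref{conj_conformal} to assert that the outermost minimal area enclosure of $\Sigma$ in $\tilde g$ still has area $16\,|\Sigma|_{\ol g}$. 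Theorem \ref{rpi} applied to $(M,\tilde g)$ then yields
\[
m + 4|A|\ =\ \tilde m\ \ge\ \sqrt{|\Sigma|_{\ol g}/\pi}.
\]

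To close the argument, set $a=|\Sigma|_{\ol g}$, $b=\int_\Sigma \ol\nu(\ol\varphi)\,d\ol A = 4\pi|A|$, and $c=\int_\Sigma \ol\nu(\ol\varphi)^{4/3}\,d\ol A$, so that $m_{\text{reg}}(\Sigma) = -\tfrac14 (c/\pi)^{3/2}$. H\"older's inequality on $\Sigma$ gives $b^{4/3}\le a^{1/3}c$, and substituting this bound together with the RPI estimate shows that the claim $m\ge m_{\text{reg}}(\Sigma)$ reduces after a short calculation to the trivial inequality $(b-2\sqrt{a\pi})^{2}\ge 0$. Proposition \ref{prop_masses_agree} then identifies $m_{\text{reg}}(\Sigma)=m_{\text{ZAS}}(\Sigma)$. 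For rigidity, $m=m_{\text{ZAS}}(\Sigma)$ forces equality in H\"older (so $\ol\nu(\ol\varphi)$ is constant on $\Sigma$) and equality in the Riemannian Penrose Inequality (so $(M,\tilde g)$ is positive Schwarzschild outside $\Sigma$, with $\Sigma$ a round sphere); unwinding $u=2-\ol\varphi$ forces $\ol g$ flat on the exterior and $\ol\varphi = 1 + m/(2r)$, identifying $(M,g)$ with the Schwarzschild ZAS metric of mass $m<0$. The central obstacle is Conjecture \ref{conj_conformal}: without control on the area of the outermost minimal area enclosure of $\Sigma$ in $\tilde g$, the RPI step yields only a weaker lower bound and the H\"older closure fails.
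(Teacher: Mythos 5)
Your overall architecture---conformally transform the resolution metric into something Schwarzschild-like, apply the Riemannian Penrose Inequality, and close with H\"older---is the same as the paper's, and your closing identity $(b-2\sqrt{\pi a})^{2}\ge 0$ is arithmetically equivalent to the paper's minimization of the quadratic $\tfrac14 x^{2}+bx$. The observation that $u=2-\ol\varphi$ carries the Schwarzschild ZAS to the positive-mass Schwarzschild metric is correct and appealing. But there is a genuine gap at the one step that carries the geometric content: the application of the Riemannian Penrose Inequality to $(M,\tilde g)$ with $\tilde g=(2-\ol\varphi)^{4}\ol g$. Theorem \ref{rpi} requires a minimal boundary, and $\Sigma$ is not minimal in $\tilde g$ in general (by the conformal mean curvature formula its mean curvature there is $\tfrac14 \ol H-\tfrac12\ol\nu(\ol\varphi)$, which has no sign). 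What you actually need is that the outermost minimal area enclosure of $\Sigma$ in $\tilde g$ be minimal, disjoint from $\Sigma$, and of area at least $16\,|\Sigma|_{\ol g}-\epsilon$. Conjecture \ref{conj_conformal} does not supply this: it is an existence statement---for \emph{some} harmonic conformal factor the outermost minimal area enclosure in the new metric has nearly the boundary's area---and it cannot be invoked for a conformal factor you have prescribed in advance. The ``Problem'' discussed in section \ref{sec_conf_conj} shows exactly why this matters: for a given metric the boundary can fail badly to be outer-minimizing, so the outermost minimal area enclosure in your $\tilde g$ could a priori have area far below $|\Sigma|_{\tilde g}$, and the RPI step then yields only a weaker bound.

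The paper sidesteps this by using Conjecture \ref{conj_conformal}, in the form of Conjecture \ref{conformal_conj_cor}, to replace the given resolution by a new global harmonic resolution $(\ol g_0,\ol\varphi_0)$ whose \emph{own} outermost minimal area enclosure $\tilde\Sigma$ is minimal and satisfies $|\Sigma|_{\ol g_0}\le|\tilde\Sigma|_{\ol g_0}+\epsilon$; the RPI is then applied directly to $(M,\ol g_0)$ outside $\tilde\Sigma$, with no second conformal factor. Your equality analysis has a related omission: equality in the RPI only identifies the manifold with Schwarzschild \emph{outside} the outermost minimal area enclosure, so you must still prove that this enclosure coincides with $\Sigma$ before ``unwinding $u$''; the paper does this by computing the capacities of $\Sigma$ and $\tilde\Sigma$ and invoking the strict monotonicity of capacity (Lemma \ref{lemma_cap}). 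As written, your argument works verbatim only in the case where $\Sigma$ is already outward-minimizing and minimal in $\tilde g$, which is essentially the Schwarzschild ZAS case you used to motivate the construction.
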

If $\Sigma$ has components $\{\Sigma_i\}_{i=1}^k$ with 
respective regular masses $m_i$, then by equation $(\ref{eq_add_masses})$ and Proposition 
\ref{prop_masses_agree}, 
the statement $m \geq m_{\text{ZAS}}(\Sigma)$ can be written 
\begin{equation*}
m \geq - \left(\sum_{i=1}^k |m_i|^{2/3}\right)^{3/2}.
\end{equation*}

\begin{thm}[Riemannian ZAS inequality, general case]
Suppose $g$ is an asymptotically flat metric on $M \sm \partial M$ of nonnegative scalar curvature such that
all components of the boundary $\Sigma=\partial M$ are ZAS.%, and that $M$ has no other singularities. 
Assume that Conjecture $\ref{conj_conformal}$ holds.
Then the ADM mass $m$ of $(M,g)$ satisfies $m \geq m_{\text{ZAS}}(\Sigma)$.
\label{thm_zas_ineq_full}
\end{thm}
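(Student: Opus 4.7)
The plan is to reduce Theorem \ref{thm_zas_ineq_full} to the harmonically regular case (Theorem \ref{thm_zas_ineq}) by approximating $\Sigma$ from the outside with harmonically regular ZAS, via the construction depicted in Figure \ref{fig_approx}. Given any sequence $\{\Sigma_n\}$ of surfaces converging in $C^1$ to $\Sigma$, let $\Omega_n$ be the bounded region enclosed by $\Sigma_n$ and let $\varphi_n$ be the $g$-harmonic function on the closure of $M \sm \Omega_n$ that vanishes on $\Sigma_n$ and tends to $1$ at infinity. Set $g_n := \varphi_n^4 g$ on $M \sm \Omega_n$. Then $g_n$ is asymptotically flat, the pair $(g, \varphi_n)$ is a global harmonic resolution of $\Sigma_n$ in the sense of Definition \ref{def_global_harmonic_resolution}, and the resolution metric $g$ is complete on the manifold-with-boundary $\overline{M \sm \Omega_n}$ because $\Sigma_n$ lies in the interior of $M$, away from the singular locus.

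Next I would verify the scalar curvature hypothesis of Theorem \ref{thm_zas_ineq} for $(M \sm \Omega_n, g_n)$. Since $\Delta_g \varphi_n = 0$ and $R_g \geq 0$, the conformal identity
\begin{equation*}
R_{g_n} = \varphi_n^{-5}\bigl(R_g\,\varphi_n - 8\,\Delta_g \varphi_n\bigr)
\end{equation*}
forces $R_{g_n} \geq 0$. Theorem \ref{thm_zas_ineq}, combined with Proposition \ref{prop_masses_agree}, then yields
\begin{equation*}
m_{\text{ADM}}(g_n) \geq m_{\text{reg}}(\Sigma_n).
\end{equation*}

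To relate $m_{\text{ADM}}(g_n)$ back to $m$, I would use standard potential theory on asymptotically flat ends together with the boundary characterization $(\ref{eq_cap_boundary})$ of capacity to obtain the expansion $\varphi_n = 1 - \frac{C(\Sigma_n)}{4\pi r} + O(r^{-2})$ in the asymptotic coordinates. The conformal change-of-mass formula then gives $m_{\text{ADM}}(g_n) = m - \frac{C(\Sigma_n)}{2\pi}$, so that
\begin{equation*}
m \geq m_{\text{reg}}(\Sigma_n) + \frac{C(\Sigma_n)}{2\pi}.
\end{equation*}

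Finally, I would split on the capacity of $\Sigma$. If $C(\Sigma) > 0$, Theorem \ref{thm_pos_cap} gives $m_{\text{ZAS}}(\Sigma) = -\infty$ and the inequality is vacuous. If $C(\Sigma) = 0$, monotonicity (Lemma \ref{lemma_cap}) forces $C(\Sigma_n) \to 0$; taking $\limsup_{n \to \infty}$ of the previous display gives $m \geq \limsup_{n \to \infty} m_{\text{reg}}(\Sigma_n)$, and taking the supremum over all sequences $\{\Sigma_n\}$ converging in $C^1$ to $\Sigma$ yields $m \geq m_{\text{ZAS}}(\Sigma)$ by Definition \ref{def_zas_mass}. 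Beyond the inherited dependence on Conjecture \ref{conj_conformal}, the only delicate technical step is the asymptotic expansion of $\varphi_n$ and the corresponding capacity correction to the ADM mass; everything else amounts to assembling ingredients already in place in Sections \ref{sec_def}--\ref{sec_mass}.
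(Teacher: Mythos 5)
Your proposal is correct and follows essentially the same route as the paper: approximate $\Sigma$ by the harmonically regular ZAS $\Sigma_n$ of $\varphi_n^4 g$, apply Theorem \ref{thm_zas_ineq} to each, shift the ADM mass back via the conformal mass formula (your asymptotic expansion of $\varphi_n$ is just the paper's Stokes'-theorem computation of $\frac{1}{2\pi}C(\Sigma_n)$ in different clothing), and conclude by splitting on the capacity of $\Sigma$. No substantive differences.
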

We have not attempted to characterize the case of equality in the general case; this issue 
is discussed in section \ref{sec_coe_general}. 

\noindent\textbf{Remarks}
\begin{compactenum}
\item Robbins \cites{robbins, robbins_paper} showed Theorem \ref{thm_zas_ineq_full} in the case $\partial M$ is connected by 
applying the technique of weak inverse mean curvature flow (IMCF) due to Huisken
and Ilmanen \cite{imcf}.  However, this approach gives no bound on the ADM mass if more than 
one ZAS is present; the proof relies crucially on the monotonicity of the Hawking mass under 
IMCF, but monotonicity is lost when the surfaces travel past a ZAS.

\item Lam proved a version of Theorem \ref{thm_zas_ineq_full} for manifolds that arise as graphs
in Minkowski space \cite{lam}.

\item Jauregui proved a weakened version of the ZAS inequality for manifolds that
are conformally flat and topologically $\R^n$ minus a finite number of domains \cite{jauregui_conf_flat}.  The non-sharpness
manifests as a multiplicative error term involving the isoperimetric ratio of the domains.

\item Theorem \ref{thm_zas_ineq} and the definition of ZAS mass have alternate interpretations in terms of
certain invariants of the harmonic conformal class of asymptotically flat metrics without singularities; see
\cite{jauregui_hci}.

\item Note that a manifold containing ZAS typically cannot be extended to a smooth, complete manifold;
consequently the positive mass theorem does not apply to such spaces.
\end{compactenum}

The following special case with $m_{\text{ZAS}}(\Sigma)=0$ deserves attention, since it is a new 
version of the positive mass theorem that allows for certain types of singularities and incomplete metrics.
\begin{cor} [Positive mass theorem with singularities]
Suppose $(M,g)$ satisfies the hypotheses of Theorem \ref{thm_zas_ineq_full} with $m_{\text{ZAS}}(\Sigma)=0$.  Then $m \geq 0$,
where $m$ is the ADM mass of $(M,g)$.
\end{cor}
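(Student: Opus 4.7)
The plan is to observe that this is an immediate consequence of Theorem \ref{thm_zas_ineq_full}. Under the stated hypotheses, that theorem (granting Conjecture \ref{conj_conformal}) yields the inequality $m \geq m_{\text{ZAS}}(\Sigma)$ for the ADM mass $m$ of $(M,g)$. Substituting the hypothesis $m_{\text{ZAS}}(\Sigma) = 0$ into the right-hand side gives $m \geq 0$, which is the desired conclusion. No additional argument is required; the corollary is essentially a restatement of the main theorem in the special case that the singularities contribute no negative mass.

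The content of the corollary lies not in the proof but in observing that the hypotheses are genuinely weaker than those of the classical Positive Mass Theorem (Theorem \ref{pmt}). The classical theorem requires $(M,g)$ to be complete and without boundary, whereas here we allow $\partial M$ to consist of zero area singularities, so the metric $g$ may fail to extend to $\partial M$ and $(M,g)$ may be incomplete. The examples of section \ref{sec_examples} illustrate the relevance: for the spherically symmetric metrics $ds^2 = dr^2 + r^\alpha\, d\sigma^2$ with $\alpha > 4/3$, the ZAS at $r=0$ has $m_{\text{ZAS}}(\Sigma) = 0$ (by Table \ref{table_examples}), even though the metric is neither smooth nor complete at the boundary; the corollary still forces the ADM mass of such a manifold to be nonnegative whenever its scalar curvature is nonnegative.

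There is no real obstacle in the proof of the corollary itself, since it is a one-line deduction. The genuine difficulty is inherited from Theorem \ref{thm_zas_ineq_full}, which is only conditional on Conjecture \ref{conj_conformal}; accordingly the corollary carries the same conditional status. One could also note, as a sanity check, that the hypothesis $m_{\text{ZAS}}(\Sigma) = 0$ forces $\Sigma$ to have zero capacity (by Theorem \ref{thm_pos_cap}, positive capacity would force $m_{\text{ZAS}}(\Sigma) = -\infty$), so the singularities in question are in a sense ``mild'' from the viewpoint of potential theory, consistent with the fact that the bound $m \geq 0$ matches the bound for smooth complete manifolds.
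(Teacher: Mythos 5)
Your proposal is correct and matches the paper exactly: the corollary is stated there as an immediate special case of Theorem \ref{thm_zas_ineq_full} with $m_{\text{ZAS}}(\Sigma)=0$ substituted into $m \geq m_{\text{ZAS}}(\Sigma)$, and no further argument is given. Your additional remarks about the weakened hypotheses and the nontriviality of zero-mass nonremovable singularities also align with the paper's own commentary following the corollary.
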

The above result is a nontrivial statement, since in section \ref{sec_examples} that there exist 
singularities of zero mass that are not removable.  
\vspace{1mm}

\subsection{A conjecture in conformal geometry}
\label{sec_conf_conj}
Our main tool in the proof of Theorem \ref{thm_zas_ineq} is the Riemannian Penrose inequality.  
The latter applies to manifolds whose boundary is a minimal surface, so it
will be necessary to transform our manifold into such (by a conformal change, for example).  This motivates 
the following problem, independent of ZAS theory.

\vspace{0.15in}
\noindent\textbf{Problem:}
Given a smooth asymptotically flat manifold $(N,h)$ with 
compact boundary (and no singularities), find a positive harmonic function $u$ such that the boundary has 
zero mean curvature in the metric $u^4 h$ and $u$ approaches a constant at infinity.
(We require $u$ to be harmonic so that the sign of scalar curvature is preserved.)  
\vspace{0.15in}

Unfortunately, this problem is unsolvable in general.  Proposition 6 of \cite{jauregui} gives the following
geometric obstruction: if mean curvature $H$ of the boundary of $(N,h)$ exceeds $4\nu(\varphi)$ 
(where $\varphi$ is harmonic with respect to $h$, 0 on $\Sigma$ and 1 at infinity, and $\nu$ is the inward
point normal to the boundary), then no such harmonic function $u$ exists so that
$u^4h$ has zero mean curvature boundary.  An adaptation of the argument shows that this procedure still
fails even if $u$ is allowed to be superharmonic.  A topological obstruction to solving the above
problem can be found in \cite{bray_npms}.

Despite these obstructions, we predict in the following conjecture that
we can ``almost'' solve the above problem in the following sense: given $(N,h)$ as above, 
there exists a positive $h$-harmonic function $u$ such that $N$ contains a compact surface 
$\tilde \Sigma$ of zero mean curvature (with respect to $u^4h$) such that $\tilde \Sigma$ and $\Sigma$ 
the same area (with respect to $u^4h$).
The following conjecture was first given in \cite{bray_npms}.
(For details on the outermost minimal area enclosure of the boundary of an asymptotically flat 
manifold, see section \ref{sec_omae} in appendix A.)
\begin{conj}[``Conformal conjecture'']
Let $(N,h)$ be a smooth asymptotically flat 3-manifold with compact, smooth, nonempty boundary 
$\Sigma$, with $h$ extending smoothly to $\Sigma$.
There exists a smooth, positive function $u$ on $N$ and metric
$h_0 = u^4 h$ satisfying the following conditions.  Let $\tilde \Sigma$ be the outermost 
minimal area enclosure of $\Sigma$ with respect to $h_0$.
\begin{enumerate}
\item $u$ is harmonic with respect to $h$ and tends to 1 at infinity.
\item In the metric $h_0$, the areas of $\tilde \Sigma$ and $\Sigma$ are equal.
\item $\tilde \Sigma$ is smooth with zero mean curvature.
\end{enumerate}
\label{conj_conformal}
\end{conj}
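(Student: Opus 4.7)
The plan is to construct $u$ via a one-parameter family of Dirichlet problems and verify the three conditions through a continuity argument. For each smooth positive function $\phi$ on $\Sigma$, let $u_\phi$ be the unique $h$-harmonic function on $N$ with boundary value $\phi$ on $\Sigma$ and tending to $1$ at infinity; standard elliptic theory together with the strong maximum principle yields that $u_\phi$ is smooth and strictly positive, so condition (1) is automatic by construction.

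To address condition (2), I would first consider the subfamily $\phi \equiv t$ with $t \in (0,1]$, obtaining $u_t$ and $h_t = u_t^4 h$. Then $|\Sigma|_{h_t} = t^4 |\Sigma|_h$, and because the outermost minimal area enclosure $\tilde\Sigma_t$ obeys $|\tilde\Sigma_t|_{h_t} \leq |\Sigma|_{h_t}$ by definition, the difference in areas is at most $t^4 |\Sigma|_h$, which can be made less than $\epsilon$ by choosing $t$ sufficiently small. From formula $(\ref{eq_conf_mean_curv})$, the mean curvature of $\Sigma$ transforms as $\tilde H = t^{-2} H + 4 t^{-3} \nu(u_t)$, and $\nu(u_t) > 0$ by the Hopf boundary lemma; hence $\tilde H > 0$ on $\Sigma$ for all small $t$, so $\Sigma$ is at least \emph{locally} outward minimizing in $h_t$.

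The principal difficulty is condition (3). Whether $\tilde\Sigma_t$ is disjoint from $\Sigma$ depends on whether some surface strictly enclosing $\Sigma$ has $h_t$-area at most $|\Sigma|_{h_t}$, which in turn requires a suitable ``neck'' in the outer geometry of $N$. If no such neck exists in $(N,h)$, a constant rescaling cannot create one (the area of concentric cross sections in the spherically symmetric model of the flat case, for instance, can be shown to increase monotonically), so one may have $\tilde\Sigma_t = \Sigma$ for all $t$. To escape this, I would allow non-constant $\phi$: by making $\phi$ attain a localized high peak at some $p \in \Sigma$, the Hopf lemma forces $\nu(u_\phi)(p) < 0$, and $\tilde H(p)$ can be driven strictly negative. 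Near such a point $\Sigma$ is no longer outward minimizing, so an outward perturbation decreases area and produces a candidate $\tilde\Sigma_\phi$ strictly enclosing $\Sigma$.

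The hard part will be arranging conditions (2) and (3) simultaneously. Introducing peaks into $\phi$ to force $\tilde\Sigma$ off of $\Sigma$ also modifies $|\Sigma|_{h_\phi}$ and shifts the location of the outermost minimal area enclosure in a manner that depends nonlinearly, and potentially discontinuously, on $\phi$. One promising route is a variational formulation: minimize the deficit $|\Sigma|_{h_\phi} - |\tilde\Sigma_\phi|_{h_\phi}$ over a suitable class of boundary data subject to a disjointness constraint, combined with geometric measure theory regularity results for outermost minimal area enclosures under conformal perturbations; alternatively, one might exploit the theory of harmonically regular ZAS developed in this paper to analyze the limit $t \to 0^+$ as a degenerate target. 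A full resolution appears to require substantial new analytic input, which is why the conjecture remains open.
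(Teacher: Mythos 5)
There is nothing in the paper to compare your attempt against: the statement you were asked to prove is Conjecture \ref{conj_conformal}, which the authors explicitly leave unproven. They offer only partial evidence --- a counterexample showing that the \emph{stronger} statement (making $\Sigma$ itself minimal via a harmonic conformal factor) is false, a remark that the conjecture holds in the spherically symmetric case, and the observation in section \ref{sec_conj} that proving or refuting it is the central open problem of the theory. Your refusal to claim a complete proof is therefore the correct outcome, and your diagnosis of where the difficulty lives is essentially the right one: condition (1) is automatic, and the real content is arranging (2) and (3) simultaneously. Your preliminary computations are sound --- for constant boundary data $t$ one has $|\Sigma|_{h_t}=t^4|\Sigma|_h$, the Hopf lemma gives $\nu(u_t)>0$, and the transformation law $(\ref{eq_conf_mean_curv})$ then gives $\tilde H>0$ on $\Sigma$ for small $t$.

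Two points in your sketch would fail if pushed further. First, the constant-rescaling route does not merely fail to help with (3); it actively works against it. Since $u_t\to 1$ at infinity, shrinking $t$ deflates areas near $\Sigma$ by $t^4$ while leaving distant surfaces essentially unchanged, so any competing ``neck'' becomes relatively \emph{more} expensive and $\tilde\Sigma_t$ is driven onto $\Sigma$, not off of it. (The paper's $1+\epsilon+\sin r$ example shows the opposite failure mode for $u\equiv 1$: a distant neck of much smaller area captures $\tilde\Sigma$ and destroys (2). The conjecture lives exactly in the tension between these two failures.) Second, forcing $\tilde H(p)<0$ at one point $p$ via a peak in $\phi$ only forces the outermost minimal area enclosure to detach from $\Sigma$ \emph{near $p$}; by the first-variation formula, the enclosure is perfectly content to coincide with $\Sigma$ wherever $\tilde H>0$, which is case (ii) of section \ref{sec_omae} and still violates condition (3). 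To get full disjointness you would need $\tilde H<0$ (or at least a barrier) on \emph{all} of $\Sigma$, and arranging that with a single globally harmonic $u$ while keeping $|\Sigma|_{h_0}-|\tilde\Sigma|_{h_0}\le\epsilon$ is precisely the unsolved problem. A concrete and worthwhile partial goal, which the paper asserts without proof, would be to write out the spherically symmetric case completely using the explicit harmonic functions $u(r)=c_1\int_0^r\frac{dr}{A(r)}+c_2$.
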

See section \ref{sec_omae}
in appendix A for a discussion of outermost minimal area enclosures. 
We remark that the conjecture is known to be true in the spherically symmetric case.
The effect of applying the Conjecture \ref{conj_conformal} to an asymptotically flat 
manifold with boundary is illustrated in figure \ref{fig_conf_conj}.

\begin{figure}[ht]
\caption{Conjecture \ref{conj_conformal}, illustrated}
\begin{center}
\includegraphics[scale=0.35]{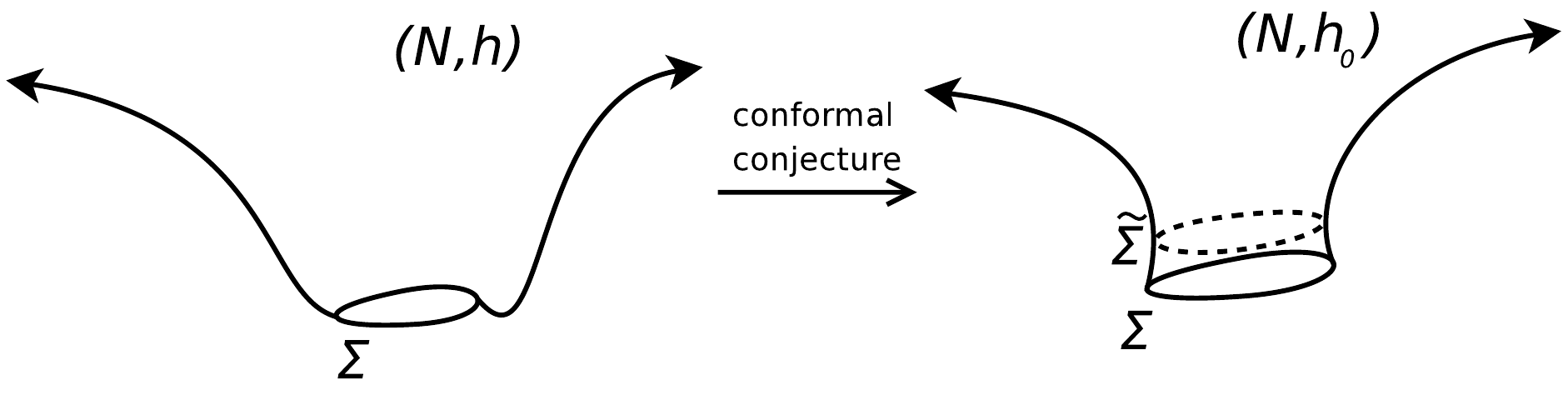}
\end{center}
\flushleft\footnotesize{If the conformal conjecture applies to $(N,h)$ as above on the left, then it gives 
the existence of $(N,h_0)$ on the right. For the latter manifold, the outermost minimal area 
enclosure, $\tilde \Sigma$, is a zero mean curvature surface and its area equals that of $\Sigma$.}
\label{fig_conf_conj}
\end{figure}

Now we explain how this conjecture relates to Theorems \ref{thm_zas_ineq} and 
\ref{thm_zas_ineq_full}.  We wish to obtain a lower
bound on the ADM mass of $(M,g)$.  For the first theorem, our strategy is to take a global 
harmonic resolution and apply Conjecture \ref{conj_conformal};
this produces a manifold-with-boundary to which the Riemannian Penrose inequality applies.  
(Note that because the conformal factor is harmonic,
the property of nonnegative scalar  curvature is preserved.  See equation 
$(\ref{eq_conf_scalar_curv})$ in appendix B.) This gives a lower bound on the ADM mass of the 
latter manifold; in the next section we explain how to transform it to the desired bound.  Finally,
Theorem \ref{thm_zas_ineq_full} will be a consequence of Theorem \ref{thm_zas_ineq} and the definition
of mass.

If true, Conjecture \ref{conj_conformal} implies the following statement, which is better adapted for 
the proof of Theorem \ref{thm_zas_ineq}.
\begin{conj}
Let $(M,g)$ have boundary $\Sigma$ consisting of ZAS and admitting a global harmonic resolution of $\Sigma$.
There exists a global harmonic resolution $(\ol g_0, \ol \varphi_0)$ of $\Sigma$ such that 
$\tilde{ 
\Sigma}$ (the outermost minimal area enclosure of $\Sigma$ in the metric $\ol g$)  is minimal and 
$|\Sigma|_{\ol g} = |\tilde { 
\Sigma}|_{\ol g}$.
\label{conformal_conj_cor}
\end{conj}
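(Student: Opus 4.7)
The plan is to apply the Conformal Conjecture (Conjecture \ref{conj_conformal}) to the resolution metric itself, and then recombine the resulting conformal factor with the original resolution to produce a new global harmonic resolution.

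By hypothesis there exists some global harmonic resolution $(\ol g, \ol \varphi)$ of $\Sigma$. In particular $(M,\ol g)$ is a smooth asymptotically flat 3-manifold with compact smooth nonempty boundary $\Sigma$ to which $\ol g$ extends smoothly; these are exactly the hypotheses of Conjecture \ref{conj_conformal}. Applying the Conformal Conjecture to $(M,\ol g)$ with the given $\epsilon>0$ produces a smooth positive $\ol g$-harmonic function $u$ tending to $1$ at infinity such that, for $\ol g_0:=u^4\ol g$, the outermost minimal area enclosure $\tilde\Sigma$ of $\Sigma$ in $\ol g_0$ is disjoint from $\Sigma$ and satisfies $\bigl||\tilde\Sigma|_{\ol g_0}-|\Sigma|_{\ol g_0}\bigr|\le \epsilon$. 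Disjointness, combined with the regularity discussion in section \ref{sec_omae} of appendix A, ensures that $\tilde\Sigma$ is smooth with zero mean curvature in $\ol g_0$.

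Next, I would define $\ol\varphi_0:=\ol\varphi/u$. Since $u$ is smooth and strictly positive, $\ol\varphi_0$ is smooth, vanishes precisely on $\Sigma$, and tends to $1$ at infinity. A direct computation gives
\begin{equation*}
\ol\varphi_0^{\,4}\,\ol g_0=\left(\frac{\ol\varphi}{u}\right)^{\!4}\!u^4\ol g=\ol\varphi^{\,4}\ol g=g
\end{equation*}
on $M\setminus\Sigma$. To verify $\ol g_0$-harmonicity of $\ol\varphi_0$, I would appeal to the conformal Laplacian identity $(\ref{eq_conf_laplacian})$ from appendix B: since $u$ is $\ol g$-harmonic and $\ol\varphi$ is $\ol g$-harmonic, this identity (applied exactly as in the parenthetical remark after the proof of Proposition \ref{reg_mass_thm}, where the same substitution $\tilde\varphi=\ol\varphi/u$, $\tilde g=u^4\ol g$ is made) yields $\ol g_0$-harmonicity of $\ol\varphi/u$. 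Thus $(\ol g_0,\ol\varphi_0)$ satisfies all three conditions of Definition \ref{def_global_harmonic_resolution} and is a bona fide global harmonic resolution of $\Sigma$.

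The area estimate $|\Sigma|_{\ol g_0}\le |\tilde\Sigma|_{\ol g_0}+\epsilon$ is then immediate from the bound supplied by Conjecture \ref{conj_conformal}; in fact the reverse inequality $|\tilde\Sigma|_{\ol g_0}\le |\Sigma|_{\ol g_0}$ is automatic since $\Sigma$ is itself an enclosure of $\Sigma$. The main (and essentially the only) obstacle is that the entire argument is conditional on the unproven Conjecture \ref{conj_conformal}; once that input is granted, the passage to Conjecture \ref{conformal_conj_cor} amounts to nothing more than the conformal bookkeeping above.
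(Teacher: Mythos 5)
Your proof is correct and follows the same route as the paper: apply Conjecture \ref{conj_conformal} to a given global harmonic resolution $(\ol g, \ol \varphi)$ to obtain the $\ol g$-harmonic factor $u$, set $\ol g_0 = u^4 \ol g$ and $\ol \varphi_0 = \ol \varphi / u$, and use identity $(\ref{eq_conf_laplacian})$ to verify $\ol g_0$-harmonicity. The paper's proof is a terser version of exactly this bookkeeping, so there is nothing to add.
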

\begin{proof}
Apply Conjecture \ref{conj_conformal} to some global harmonic resolution $(\ol g, \ol \varphi)$, obtaining a metric $\ol g_0$ and a function $u$ 
that is harmonic with respect to $\ol g$, such that $\ol g_0 = u^4 \ol g$.  From
equation $(\ref{eq_conf_laplacian})$ in appendix B, the function $\ol \varphi_0:=\frac{\ol 
\varphi}{u}$ is harmonic with respect to $\ol g_0$.  
It is readily checked that $(\ol g_0, \ol \varphi_0)$ is a global harmonic resolution of $\Sigma$ obeying
the desired properties.
\end{proof}

\subsection{Proofs of the main theorems}
We prove the first part of Theorem $\ref{thm_zas_ineq}$.  Note that nonnegativity of the scalar curvature 
is preserved under global harmonic resolutions and applications
of Conjecture \ref{conj_conformal}, since the conformal factors are harmonic (see equation 
$(\ref{eq_conf_scalar_curv})$ in appendix B).
This fact will allow the use of the Riemannian Penrose inequality (Theorem \ref{rpi}).
\begin{proof} [Proof of first part of Theorem \ref{thm_zas_ineq}]
By hypothesis, we may assume the existence of a global harmonic resolution $(\ol g, \ol \varphi)$ 
of $\Sigma$ as in Conjecture \ref{conformal_conj_cor}.  As above, we let
$\tilde \Sigma$ be the outermost minimal area enclosure of $\Sigma$ in the metric $\ol g$.
(We reiterate the points that the Riemannian Penrose inequality applies to $(M, \ol g)$
and $|\Sigma|_{\ol g} = |\tilde {\Sigma}|_{\ol g}$.)  Now we make a series of estimates, 
where $m$ and $\ol m$ are the ADM masses of $g$ and $\ol g$:
{\allowdisplaybreaks
\begin{align*}
m &= \ol m - \frac{1}{2\pi} \lim_{r \to \infty}\int_{S_r} \ol\nu (\ol \varphi) \ol{dA}\\
		 &\qquad\qquad \text{(formula $(\ref{eq_conf_masses})$ in appendix 
B; $S_r$ is a large coordinate sphere)}\nonumber\\
	&\geq \sqrt{\frac{|\tilde{ \Sigma}|_{\ol g}}{16\pi}} - \frac{1}{2\pi} \int_{\Sigma} \ol\nu (\ol \varphi) \ol{dA}\\
		&\qquad \qquad \text{(Riemannian Penrose inequality; $\ol \Delta\ol \varphi=0$; Stokes' theorem)}\nonumber\\
	&\geq \sqrt{\frac{|\Sigma|_{\ol g}}{16\pi}} - \frac{1}{2\pi} \left(\int_{\Sigma} (\ol\nu (\ol \varphi))^{4/3} \ol{dA}\right)^{3/4} 
		|\Sigma|_{\ol g}^{1/4}\\
		&\qquad\qquad\text{(Conjecture \ref{conformal_conj_cor}, H\"{o}lder's inequality)}\nonumber\\
	&= \frac{1}{4}\sqrt{\frac{|\Sigma|_{\ol g}}{\pi}} - \frac{1}{2} \left(\frac{1}{\pi}\int_{\Sigma} (\ol\nu (\ol \varphi))^{4/3} 
	   \ol{dA}\right)^{3/4} \left(\frac{|\Sigma|_{\ol g}}{\pi}\right)^{1/4}\\
		&\qquad\qquad\text{(rearranging)}\nonumber\\
	&\geq \inf_{x \in \R} \left(\frac{1}{4}x^2 - \frac{1}{2} \left(\frac{1}{\pi}\int_{\Sigma} (\ol\nu (\ol \varphi))^{4/3}
           \ol{dA}\right)^{3/4}x\right)\\
	        &\qquad\qquad\text{(replacing $\left(\frac{|\Sigma|_{\ol g}}{\pi}\right)^{1/4}$ with $x$)}\\
	&= -\frac{1}{4} \left(\frac{1}{\pi}\int_{\Sigma} (\ol\nu (\ol \varphi))^{4/3}
           \ol{dA}\right)^{3/2}\\
	&\qquad\qquad\text{(the quadratic $\frac{1}{4}x^2+bx$ has minimum value $-b^2$)}\\
	&= m_{\text{ZAS}}(\Sigma)\\
	&\qquad\qquad\text{(definition of regular mass; Proposition \ref{prop_masses_agree})}\nonumber
\end{align*}}
This proves inequality $(\ref{zas_ineq})$.
\end{proof}
Before proving the second part of Theorem \ref{thm_zas_ineq}, we show that Theorem \ref{thm_zas_ineq_full} easily follows from the above.
\begin{proof}[Proof of Theorem \ref{thm_zas_ineq_full}]
If the capacity of $\Sigma$ is positive, then $m_{\text{ZAS}}(\Sigma) = -\infty$ by Theorem 
\ref{thm_pos_cap}.  Therefore $m \geq m_{\text{ZAS}}(\Sigma)$ follows trivially.
Now assume $\Sigma$ has zero capacity.

Let $\{\Sigma_n\}$ converge to $\Sigma$ in $C^1$, and let $\varphi_n$ be $g$-harmonic,
vanishing on $\Sigma_n$ and tending to 1 at infinity.
As before, $(g,\varphi_n)$ gives a global harmonic resolution of the ZAS $\Sigma_n$ of the 
metric $\varphi_n^4 g$.  By Theorem $\ref{thm_zas_ineq}$, we have
\begin{equation}
m_{ADM}(\varphi_n^4 g) \geq m_{\text{reg}}(\Sigma_n),
\label{mass_estimate}
\end{equation}
where $m_{ADM}(\varphi_n^4 g)$ is the ADM mass of the metric $\varphi_n^4 g$.  
Formula $(\ref{eq_conf_masses})$ in appendix B allows us to compute the ADM mass of a conformal 
metric in terms
of the original metric.  In the case at hand it gives
\begin{equation}
m= m_{ADM}(\varphi_n^4 g) + \frac{1}{2\pi} \lim_{r \to \infty} \int_{S_r} 
\nu(\varphi_n) dA.
\label{eq_masses_gen_case}
\end{equation}
Since $\varphi_n$ is harmonic and $S_r$ is homologous to $\Sigma_n$, Stokes' Theorem shows that the second term on the right-hand 
side is equal to $\frac{1}{2\pi} \int_{\Sigma_n} \nu(\varphi_n) dA$, which equals
$\frac{1}{2\pi} C(\Sigma_n)$.  Since $\Sigma$ has zero capacity, $\lim_{n \to \infty} C(\Sigma_n)=0$.
Now, taking $\limsup_{n\to \infty}$ of $(\ref{mass_estimate})$ and applying $(\ref{eq_masses_gen_case})$, we have 
\begin{equation*}
m \geq \limsup_{n \to \infty} m_{\text{reg}}(\Sigma_n).
\end{equation*}
By taking the supremum of this expression over all such $\{\Sigma_n\}$, we have that $m \geq m_{\text{ZAS}}(\Sigma)$.
\end{proof}

Now, we prove the second part of Theorem \ref{thm_zas_ineq} (characterizing the case of equality).
\begin{proof} [Proof of second part of Theorem \ref{thm_zas_ineq}]
One may readily show that the Schwarz\-schild ZAS metric $g=\left(1+\frac{m}{2r}\right)^4 
\delta$ (with $m < 0$ and $\delta$ the flat metric) on $\R^3 \sm B_{| m|/2}(0)$
has both ADM mass and ZAS mass
equal to $m$.  (For the latter, apply the definition of regular mass to the resolution
$\left(\delta, 1+\frac{m}{2r}\right)$ of $\Sigma$.)

Assume all components of $\Sigma=\partial M$ are harmonically regular ZAS.  Let $(\ol 
g, \ol \varphi)$ be a global harmonic resolution of $\Sigma$
as in Conjecture \ref{conformal_conj_cor}; in particular $\tilde{\Sigma}$
is minimal and $|\Sigma|_{\ol g} = |\tilde { \Sigma}|_{\ol g}$, (where, as before,
$\tilde{\Sigma}$ is the outermost minimal area enclosure of $\Sigma$ in the metric $\ol g$).
Assume that the ADM mass $m$ of $(M,g)$ equals $m_{\text{ZAS}}(\Sigma)$, so that 
equality holds at each step in the proof
of inequality $(\ref{zas_ineq})$.  In particular, the following must hold:
\begin{enumerate}
  \item $\ol m = \sqrt{\frac{|\tilde{ \Sigma}|_{\ol g}}{16\pi}}$, so equality holds in the Riemannian Penrose inequality. Thus,
    $\ol g$ is isometric to the Schwarzschild metric of mass $\ol m > 0$ \mbox{(on $\R^3 \sm B_{\ol m/2}(0)$)} outside of $\tilde{\Sigma}$.  (This 
does not complete
    the proof, since we have not yet understood the region inside of $\tilde {\Sigma}$.)
  \item $\ol \nu(\ol \varphi)$ is constant on $\Sigma$ (by the case of equality of H\"{o}lder's inequality).
  \item The minimum of the quadratic $\frac{1}{4}x^2+bx$ is attained, so $x = -2b$.  Thus,
    \begin{equation}
      \left(\frac{|\Sigma|_{\ol g}}{\pi}\right)^{1/4} = \left(\frac{1}{\pi}\int_{\Sigma} (\ol\nu (\ol \varphi))^{4/3}
           \ol{dA}\right)^{3/4}.
      \label{eq_min_quad}
    \end{equation}
  Squaring $(\ref{eq_min_quad})$, dividing by $-4$, and applying the definition of regular mass gives
  \begin{equation}
    -\sqrt{\frac{|\Sigma|_{\ol g}}{16\pi}} = m_{\text{ZAS}}(\Sigma).
    \label{assumption3}
  \end{equation} 
\end{enumerate}

Out strategy is to show $\tilde { \Sigma}$  and $\Sigma$ have the same capacity (in the $\ol g$ metric); since the former encloses the latter by
definition,
Lemma $\ref{lemma_cap}$ would imply that these surfaces are equal.  First,
\begin{align*}
C(\Sigma)^2 &= \left(\int_\Sigma \ol \nu(\ol \varphi) d \ol A\right)^2
		&&\text{(expression $(\ref{eq_cap_boundary})$ for  capacity)}\\
	    &= \frac{1}{4} \left(\frac{1}{\pi} \int_\Sigma \ol \nu(\ol \varphi)^{4/3} d \ol A\right)^{3/2} 
		\cdot 4 \pi^{3/2} |\Sigma|_{\ol g}^{1/2}
			&&\text{($\ol \nu(\ol \varphi)$ is constant on $\Sigma$)}\\	
	    &= -m_{\text{ZAS}}(\Sigma)\cdot 4 \pi^{3/2} |\Sigma|_{\ol g}^{1/2}
		&&\text{($m_{\text{ZAS}}(\Sigma)=m_{\text{reg}}(\Sigma)$)}\\
	    &=\pi|\Sigma|_{\ol g}
		&&\text{(by equation $(\ref{assumption3})$)}
\end{align*}
To compute the capacity of $\tilde { \Sigma}$, let $\psi$ be the $\ol g$-harmonic function that equals $0$ on $\tilde 
{ \Sigma}$ and tends to 1 at infinity.  In fact, $\psi(r)$ is given by $\frac{1- \frac{\ol m}{2r}}{1+ \frac{\ol m}{2r}}$.
(Here, we have used the identification of $\ol g$ outside of $\tilde { \Sigma}$ with the Schwarzschild metric.)
\begin{align*}
C(\tilde { \Sigma})^2 &= \left( \int_{\tilde { \Sigma}} \ol \nu(\psi) d \ol A\right)^2
		&&\text{(expression $(\ref{eq_cap_boundary})$ for  capacity)}\\
	    &= \left(\frac{1}{4\ol m} |\tilde{\Sigma}|_{\ol g}\right)^2
		&&\left(\ol \nu(\psi)|_{\tilde {\Sigma}} = \frac{1}{\left(1+\frac{\ol m}{2r}\right)^2}\frac{\partial}{\partial r}
		 \psi(r)\Big|_{r=\ol m/2}= \frac{1}{4\ol m}\right)\\
	    &= \pi |\tilde { \Sigma}|_{\ol g} &&\text{(equality in Riemannian Penrose inequality)}\\
	    &\leq \pi |\Sigma|_{\ol g}
			&&\text{(by definition of $\tilde { \Sigma}$)}\\
	    &= C(\Sigma)^2
			&&\text{(above computation of $C(\Sigma)$)}
\end{align*}
Therefore, $C(\tilde {\Sigma}) \leq C(\Sigma)$.  But we know from Lemma $\ref{lemma_cap}$ that the reverse inequality holds as well
(since $\tilde { \Sigma}$ encloses $\Sigma$ by definition).  By
the second part of Lemma $\ref{lemma_cap}$, we have that $\tilde { \Sigma} = \Sigma$.  Therefore $\ol \varphi = \psi$, so $g$
is given by 
\begin{equation*}
g = \ol \varphi^4 \ol g = \psi^4 \left(1+\frac{\ol m}{2r}\right)^4 \delta = \left(1-\frac{\ol 
m}{2r}\right)^4 \delta,
\end{equation*}
which is the Schwarzschild ZAS metric of mass $-\ol m=m$.
\end{proof}

\subsection{Example: Schwarzschild space with a cylinder}
\label{sec_schwarz_cyl}
We now give a global example of a manifold of positive ADM mass that contains a zero
area singularity; the construction begins with a Schwarzschild space with a cylinder appended
to its horizon.  For fixed parameters $\ol m > 0, L > 0$,  
let $M$ be the manifold with boundary $S^2 \times [\ol m /2 - L, \infty)$.  Give $M$ a 
metric $\ol g$ according to:
\begin{equation*}
\ol g = 
\begin{cases}
dr^2 + 4 \ol m^2 d\sigma^2,& \text{on } S^2 \times [\ol m /2-L, \ol m/2)\\
\left(1+ \frac{\ol m}{2r}\right)^4 (dr^2 + r^2 d\sigma^2), & \text{on } S^2 \times [\ol m /2, \infty)
\end{cases}
\end{equation*}
where $r$ parametrizes $[\ol m/2-L, \infty)$ and $d\sigma^2$ is the round metric of radius 1 on $S^2$.
The first region is a round cylinder of length $L$ with spherical cross sections of area $16\pi \ol m^2$.
The second region, diffeomorphic to $\R^3 \sm B_{\ol m /2}(0)$,
has the Schwarzschild metric of mass 
$\ol m$.  By construction, the sphere
$S^2 \times \{\ol m /2\}$ is the apparent horizon, and this metric is $C^{1,1}$.
For a diagram, see figure \ref{fig_extended_schwarz}.  

Let $\ol \varphi$ be the unique 
$\ol g$-harmonic function that equals $0$ on 
$\Sigma = \partial M = S^2 \times \{m/2-L\}$ and tends to 1 at infinity.  Then $\Sigma$ is a 
harmonically regular ZAS of $g := \ol \varphi^4 \ol g$ with global harmonic resolution $(\ol g, \ol \varphi)$.  In 
this example, we 
shall compute the mass of the manifold $(M,g)$ as well as mass of the ZAS $\Sigma$ and 
find that  they respect inequality $(\ref{zas_ineq})$ and are equal if and only if $L=0$.  

\begin{figure}[ht]
\caption{Schwarzschild space with a cylinder}
\begin{center}
\includegraphics[scale=0.5]{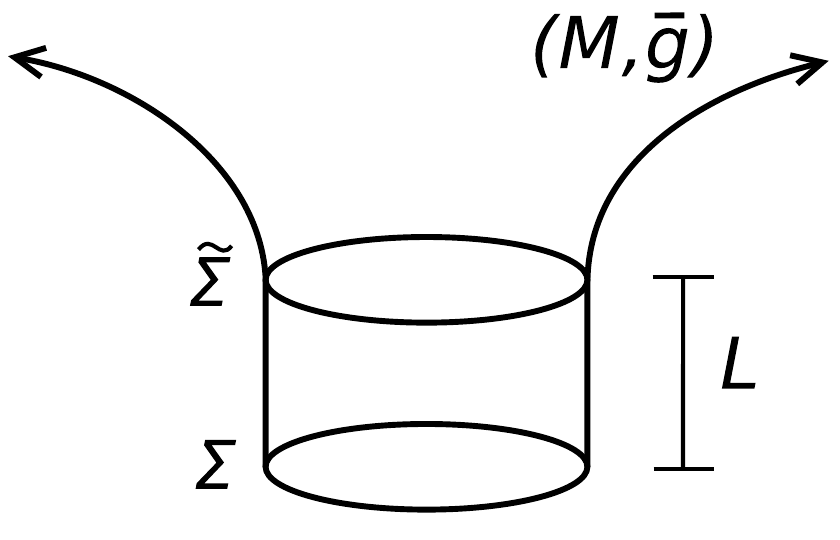}
\end{center}
\flushleft\footnotesize{$(M,\ol g)$ is Schwarzschild space with a round cylinder over $S^2$ of length $L$ appended to the minimal surface.}
\label{fig_extended_schwarz}
\end{figure}
By spherical symmetry, $\ol \varphi$ depends only on $r$.  Such harmonic functions on a round cylinder and Schwarzschild space
are readily characterized; $\ol \varphi$ takes the form:
\begin{equation*}
\ol \varphi(r) = 
\begin{cases}
\frac{a}{L}\left(r-\ol m / 2 +L\right), & r\in[\ol m /2-L, \ol m/2)\\
\frac{1+\frac{b}{2r}}{1+\frac{\ol m}{2r}}, & r \in [\ol m /2, \infty)
\end{cases}
\end{equation*}
where $a$ is a parameter to be determined and $b:=(2a - 1) \ol m$ is chosen so that $\ol \varphi$ is continuous (and so $g$ is continuous).
For $g$ to be a $C^1$ metric, the unit normal derivative $\ol \nu$ of $\ol \varphi$ at $\tilde \Sigma$ must be continuous.
Inside $\tilde \Sigma$, $\ol \nu(\ol \varphi) = \frac{a}{L}$.  Outside, 
$\ol \nu = \frac{1}{\left(1+\frac{\ol m}{2r}\right)^2} \frac{\partial}{\partial r}$, and a short computation
shows $\ol \nu(\ol \varphi) = \frac{1}{8\ol m}\left(1- \frac{b}{\ol m}\right)$ on $\tilde \Sigma$.  Equating these interior and 
exterior boundary conditions on $\ol \nu (\ol \varphi)$ allows us to solve for $a$:
\begin{equation*}
a = \frac{L}{L+4\ol m},
\end{equation*}
so that
\begin{equation*}
b:=(2a-1)\ol m = \left(\frac{2L}{L+4\ol m} - 1\right)\ol m=\frac{\ol m(L-4\ol m)}{L+4\ol m}.
\end{equation*}
Furthermore, $g = \ol \varphi^4 \ol g = \left(1+\frac{b}{2r}\right)^4 \delta$ outside $\tilde \Sigma$, so the ADM mass 
$m$ of $(M,g)$ is $b$.  
On the other hand, the mass of the harmonically regular ZAS $\Sigma$ can be computed from Definition \ref{def_reg_mass} and 
Proposition \ref{prop_masses_agree} as
\begin{align*}
m_{\text{ZAS}}(\Sigma) &= -\frac{1}{4}\left(\frac{1}{\pi} \int_{\Sigma} \ol \nu (\ol \varphi)^{4/3} d \ol A\right)^{3/2}\\
	  &= -\frac{|\Sigma|_{\ol g}^{3/2}}{4\pi^{3/2}} \left(\frac{a}{L}\right)^2\\
	  &= -\frac{|\tilde\Sigma|_{\ol g}^{3/2}}{4\pi^{3/2}} \frac{1}{(L+4\ol m )^2}\\
	  &= -\frac{16\ol m^3}{(L+4\ol m)^2},
\end{align*}
where we have used the fact that $|\Sigma|_{\ol g} = |\tilde \Sigma|_{\ol g} = 16\pi \ol m^2.$
Now, inequality $(\ref{zas_ineq})$ (the Riemannian ZAS inequality) is satisfied with $(M,g)$; by the
above computations, this is stated as:
\begin{equation*}
m =  \frac{\ol m(L-4\ol m)}{L+4\ol m}  \geq  -\frac{16\ol m^3}{(L+4\ol m)^2} = m_{\text{ZAS}}(\Sigma).
\end{equation*}
Note that equality holds if and only if the length $L$ of the cylinder is zero, in which case $(M,g)$
is the Schwarzschild ZAS.

Finally, we note that $g$ is merely $C^{1,1}$, so the ZAS theory of this paper does not technically apply.  However, $g$ could be perturbed 
to a $C^\infty$ metric exhibiting the qualitative properties above.

\begin{figure}[ht]
\caption{ADM mass and ZAS mass for Schwarzschild space with a cylinder}
\begin{center}
\includegraphics[scale=0.75]{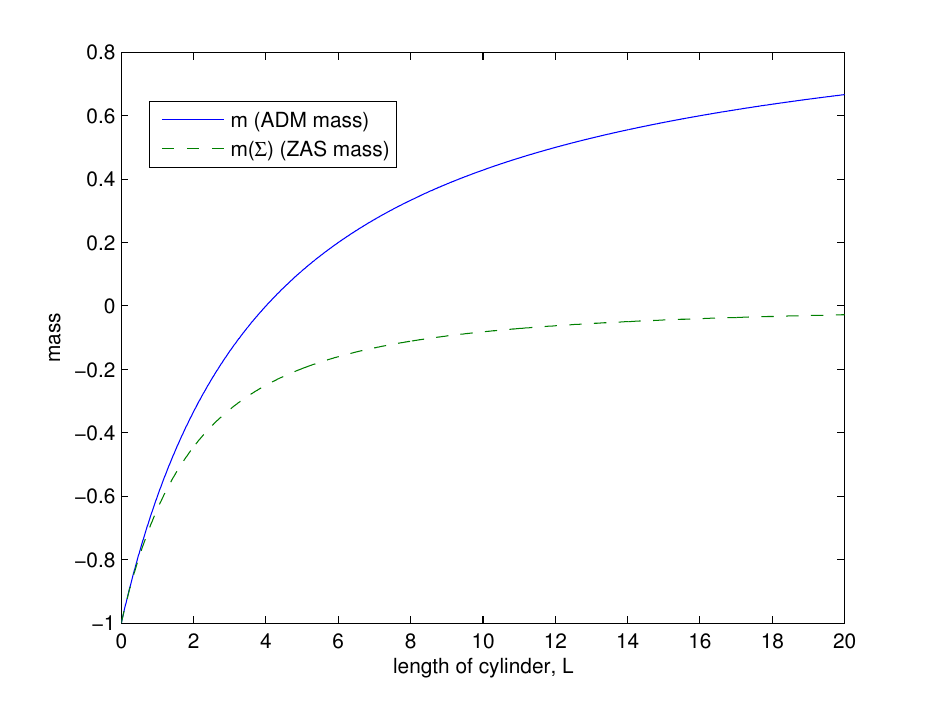}
\end{center}
\flushleft\footnotesize{With $\ol m=1$, the ADM mass $m$ of $(M,g)$ and the 
mass $m_{\text{ZAS}}(\Sigma)$ of the ZAS $\Sigma$ are plotted against the length $L$
of the cylinder, as described in section \ref{sec_schwarz_cyl}.  The former is asymptotic to 1, and the latter is 
asymptotic to zero.}  These values obey inequality (\ref{zas_ineq}) for all $L\geq 0$, with equality only for $L=0$.
\label{fig_zas_cylinder}
\end{figure}

\section{Conjectures and open problems}
\label{sec_conj}
Discussions in this paper raise a number of interesting questions.

\subsection{Conformal conjecture}
To complete the arguments of this paper, Conjecture \ref{conj_conformal} must be proved.  It is readily
verified in a spherically symmetric setting.
Establishing this conjecture as a theorem (or providing a counterexample) will be crucial to furthering 
the theory of ZAS.  For progress in this direction, see \cite{jauregui}.

\subsection{The case of equality in general}
\label{sec_coe_general}
For ZAS that are not harmonically regular, the case of equality of Theorem \ref{thm_zas_ineq_full} is not necessarily the Schwarzschild 
ZAS metric. For instance, $\R^3$ minus a finite number of points gives an 
asymptotically flat manifold of zero scalar curvature (and zero ADM mass) whose boundary consists of ZAS of zero mass. However we conjecture that
deleted points are the only obstruction to uniqueness of the Schwarzschild ZAS as the case of equality:
\begin{conj}
In Theorem \ref{thm_zas_ineq_full}, if it happens that $m = m_{\text{ZAS}}(\Sigma)$, then $(M,g)$ is isometric to 
\begin{compactenum}
\item a Schwarzschild metric with finitely many points deleted, if $-\infty < m < 0$, or
\item the flat metric on $\R^3$ with finitely many points deleted if $m=0$.
\end{compactenum}
\label{conj_coe}
\end{conj}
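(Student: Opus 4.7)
The plan is to run the case-of-equality argument of Theorem \ref{thm_zas_ineq} asymptotically along the maximizing sequence used in the proof of Theorem \ref{thm_zas_ineq_full}. First reduce to zero capacity: by Theorem \ref{thm_pos_cap}, positive capacity would force $m_{\text{ZAS}}(\Sigma)=-\infty$, contradicting $-\infty<m$. Using the existence-of-a-maximizing-sequence result stated in Section \ref{sec_mass_zas}, pick $\{\Sigma_n\}$ converging in $C^1$ to $\Sigma$ with $\lim_{n\to\infty}m_{\text{reg}}(\Sigma_n)=m$. As in the proof of Theorem \ref{thm_zas_ineq_full}, each $(M\sm\Omega_n,\varphi_n^4 g)$ is a smooth asymptotically flat manifold of nonnegative scalar curvature with harmonically regular ZAS $\Sigma_n$, admitting the global harmonic resolution $(g,\varphi_n)$, and satisfies $m_{ADM}(\varphi_n^4 g)-m_{\text{reg}}(\Sigma_n)=m-m_{\text{reg}}(\Sigma_n)-\tfrac{1}{2\pi}C(\Sigma_n)\to 0$. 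Thus we have asymptotic equality in Theorem \ref{thm_zas_ineq} for every $n$.

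The next step is to invoke the case of equality of Theorem \ref{thm_zas_ineq} asymptotically to conclude that the manifolds $(M\sm\Omega_n,\varphi_n^4 g)$ are close, in a suitable sense, to Schwarzschild ZAS metrics of masses $m_n\to m$ (or to flat $\R^3$ minus a ball, if $m=0$). Since $\Sigma$ has zero capacity, Lemma \ref{capacity_lemma} combined with standard elliptic regularity shows $\varphi_n\to 1$ in $C^\infty_{loc}(M\sm\Sigma)$, so on any compact $K\subset M\sm\Sigma$ the metrics $\varphi_n^4 g$ converge smoothly to $g$. Combining the two facts, $g$ itself must agree on $M\sm\Sigma$ with the Schwarzschild metric of mass $m<0$ (respectively the flat metric, if $m=0$), realizing $M\sm\Sigma$ isometrically as an open subset $U$ of the Schwarzschild (or flat) manifold.

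Finally, we classify the singularities. Having identified $M\sm\Sigma$ with an open $U$ inside the ambient complete Schwarzschild (or flat) space, the set $\Sigma$ must correspond to the complement $U^c$ in that space. Using the catalog of spherically symmetric model ZAS computed in Section \ref{sec_examples}, the only local singular profile consistent both with zero capacity and with a zero contribution to $m_{\text{ZAS}}(\Sigma)$ is the removable point singularity (the case $\alpha=2$); codimension-two strata such as a removable $S^1$ carry mass $-\infty$, and non-regular codimension-one profiles either have positive capacity or nontrivial negative mass, each contradicting $m=m_{\text{ZAS}}(\Sigma)\in(-\infty,0]$. This reduces $U^c$ to a (necessarily finite, by compactness of $\partial M$) collection of deleted points, proving the conjecture.

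The main obstacle is the second step: extracting ``almost Schwarzschild'' from asymptotic equality in the ZAS Inequality requires a quantitative (stability) version of the rigidity in the Riemannian Penrose Inequality, which is open in general. A promising workaround is to exploit the flexibility in Conjecture \ref{conj_conformal} (sending $\epsilon\to 0$) together with a compactness argument on the sequence of resolved manifolds to produce a \emph{limiting} asymptotically flat manifold on which equality in the Penrose inequality holds \emph{exactly}, allowing one to invoke the known rigidity in place of an unproven stability statement. For the subcase $m=0$, an alternative is to attempt to extend $(M,g)$ across $\Sigma$ and apply the rigidity half of the Positive Mass Theorem directly; the obstruction to such an extension should itself be exactly the deleted-points structure predicted by the conjecture, giving a self-contained argument in that case.
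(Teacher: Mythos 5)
You are attempting to prove Conjecture \ref{conj_coe}, which the paper explicitly leaves open: section \ref{sec_coe_general} states that the case of equality in Theorem \ref{thm_zas_ineq_full} has not been characterized, so there is no proof in the paper to compare against. Your first step (reducing to zero capacity via Theorem \ref{thm_pos_cap}) and your setup of a maximizing sequence with $m_{ADM}(\varphi_n^4 g) - m_{\text{reg}}(\Sigma_n) \to 0$ are both correct. But the core of the argument is missing. Invoking ``the case of equality of Theorem \ref{thm_zas_ineq} asymptotically'' requires a quantitative stability theorem for the Riemannian Penrose Inequality (and for each of the intermediate inequalities in the proof of Theorem \ref{thm_zas_ineq}, including H\"older's inequality and the quadratic minimization, all already contaminated by the $O(\epsilon)$ error from Conjecture \ref{conj_conformal}); no such stability statement is known. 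Your proposed workaround is circular: by Lemma \ref{capacity_lemma}, the smooth local limit of the manifolds $(M\sm\Omega_n,\varphi_n^4 g)$ is precisely $(M\sm\Sigma, g)$ itself, i.e., the original singular, incomplete manifold. The known rigidity in the Penrose Inequality applies only to smooth complete manifolds with minimal boundary, so the ``limiting manifold on which equality holds exactly'' is exactly the object you cannot apply rigidity to. Nothing in the compactness argument produces a new, smooth object.

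The final classification step is also wrong as stated. You claim that the only singular profile with zero capacity and zero mass contribution is a removable point, but Table \ref{table_examples} exhibits spherically symmetric ZAS with zero capacity, zero mass, and which are \emph{not} removable (the cases $\tfrac{4}{3}<\alpha<2$ and $\alpha>2$); the paper explicitly remarks that not all zero-mass ZAS are removable. So the local ``catalog'' cannot do the work you assign to it. What would actually be needed is the global identification of $M\sm\Sigma$ with an open subset $U$ of the Schwarzschild or flat manifold (granting step two), followed by an argument that the complement $U^c$ is a finite set of points: zero capacity only forces $U^c$ to be polar, and one must additionally use that $\partial M$ is a finite union of smooth closed surfaces, each a ZAS of the flat or Schwarzschild metric, to exclude other polar complements. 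None of this is supplied. In short, the proposal correctly identifies the shape a proof might take, but both the rigidity-from-near-equality step and the removability step are genuine, unclosed gaps, consistent with the statement remaining a conjecture in the paper.
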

 
\subsection{Generalization of the Riemannian ZAS inequality}
The first author conjectured the following inequality for the situation in which black holes 
and ZAS exist simultaneously \cite{bray_npms}.  Recall that a surface is area outer-minimizing
if every surface enclosing it has equal or greater area.
\begin{conj}
Let $g$ be an asymptotically flat metric on $M$ with nonnegative scalar curvature.
Assume $M$ has compact smooth boundary $\partial M$ that is a
disjoint union $S \cup \Sigma$, where $S$ is an area outer-minimizing minimal surface and $\Sigma$
consists of ZAS.  Then the ADM mass $m$ of $(M,g)$ obeys
\begin{equation}
m \geq \sqrt{\frac{|S|_g}{16\pi}} + m_{\text{ZAS}}(\Sigma).
\label{eq_mixed_ineq}
\end{equation}
(Note that $S$ and $\Sigma$ need not be connected, and $m_{\text{ZAS}}(\Sigma)$ is nonpositive.)
\end{conj}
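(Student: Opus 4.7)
The plan is to follow the strategy of Theorem \ref{thm_zas_ineq_full}, modified to track the contribution from the apparent horizon $S$. First I would reduce to the harmonically regular case via an approximation argument in the spirit of Theorem \ref{thm_zas_ineq_full}: take a sequence $\{\Sigma_n\}$ converging to $\Sigma$ in $C^1$, and let $\varphi_n$ be the $g$-harmonic function on $M\sm\Omega_n$ that vanishes on $\Sigma_n$, tends to $1$ at infinity, and satisfies the Neumann condition $\nu(\varphi_n) = 0$ on $S$. The Neumann condition is imposed so that, under the conformal transformation formula for mean curvature in appendix B, the surface $S$ remains minimal in the associated resolution metric and boundary flux integrals over $S$ drop out of the Stokes-theorem calculations. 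An analogous reduction then lets us assume that $\Sigma$ admits a global harmonic resolution $(\ol g, \ol \varphi)$ with $\ol \nu(\ol \varphi) = 0$ on $S$, so that in the metric $\ol g$ the boundary $\partial M = S \cup \Sigma$ consists of a minimal surface together with a smooth but generally non-minimal one.

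Next I would formulate and assume a version of Conjecture \ref{conj_conformal} adapted to manifolds whose boundary decomposes as a disjoint union of a minimal component and a non-minimal component. The extension should provide, for each $\epsilon > 0$, a positive $\ol g$-harmonic function $u$ with $u \to 1$ at infinity and $\ol \nu(u) = 0$ on $S$, such that in the metric $\ol g_0 := u^4 \ol g$ the outermost minimal area enclosure $\tilde \Sigma$ of $\Sigma$ satisfies $|\tilde \Sigma|_{\ol g_0} \leq |\Sigma|_{\ol g_0} + \epsilon$ and is disjoint from both $\Sigma$ and $S$. The Neumann condition on $u$ preserves the minimality of $S$, so that the outermost minimal area enclosure of $\partial M$ in $\ol g_0$ is $S \sqcup \tilde \Sigma$. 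Since harmonic conformal changes preserve the nonnegativity of scalar curvature, the Riemannian Penrose Inequality (Theorem \ref{rpi}, in the multi-component form) then gives
\begin{equation*}
\ol m_0 \;\geq\; \sqrt{\frac{|S|_{\ol g_0} + |\Sigma|_{\ol g_0} - \epsilon}{16\pi}}.
\end{equation*}
Setting $\psi := \ol \varphi / u$, equation $(\ref{eq_conf_laplacian})$ shows $(\ol g_0, \psi)$ is itself a global harmonic resolution of $\Sigma$. The conformal ADM mass formula, Stokes' Theorem applied to the harmonic $\psi$, and the Neumann conditions on $S$ together reduce the correction from $\ol m_0$ to $m$ to a flux integral purely over $\Sigma$, which H\"{o}lder's Inequality controls in terms of $m_{\text{reg}}(\Sigma)$ computed in the resolution $(\ol g_0, \psi)$.

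The hardest part will be the final arithmetic step. In the pure ZAS case of Theorem \ref{thm_zas_ineq} the optimization $\inf_x\bigl(\tfrac{1}{4}x^2 - \tfrac{1}{2}bx\bigr) = -\tfrac{1}{4}b^2$ exactly reproduces $m_{\text{reg}}(\Sigma)$, but here the Riemannian Penrose bound takes the form $\sqrt{(|S| + |\Sigma|)/(16\pi)}$, and because $\sqrt{p+q}$ does not split as $\sqrt{p}+\sqrt{q}$ the naive quadratic trick fails to produce the desired \emph{additive} bound $\sqrt{|S|_g/(16\pi)} + m_{\text{ZAS}}(\Sigma)$. I expect the correct fix is to strengthen the extended Conformal Conjecture so that additionally $\psi \equiv 1$ on $S$ (which forces $|S|_g = |S|_{\ol g_0}$) together with a sharper area estimate that decouples the $S$ and $\Sigma$ contributions; under such a normalization a variant of the quadratic optimization, applied only to the $\Sigma$ part of the area, should recover the additive form. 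As an alternative attack, one might attempt a direct inverse mean curvature flow argument in the spirit of \cite{imcf, robbins}, starting just outside $\Sigma$ and handling the passage past $S$ by the standard Huisken--Ilmanen jumping procedure, though verifying that the required Hawking-mass monotonicity survives in the presence of both $S$ and multiple components of $\Sigma$ is the principal technical challenge of that route.
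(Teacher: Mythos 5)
This statement is Conjecture 26 of the paper and is explicitly left open there: the authors offer only a heuristic (the interaction energy between a positive-mass horizon and a negative-mass ZAS should be positive) and no proof, so there is no argument of theirs to compare yours against. Your proposal does not close the conjecture either, and the obstruction you flag in your last paragraph is not merely an inconvenience of the ``naive quadratic trick'' --- it is fatal to the whole strategy. After the reductions, the Penrose Inequality applied to the total outermost minimal area gives a bound of the shape $\frac{1}{4}\sqrt{s+x^4} - bx$ with $s = |S|_{\ol g_0}/\pi$, $x = (|\Sigma|_{\ol g_0}/\pi)^{1/4}$, and $b$ the H\"older constant, whereas the conjecture demands $\frac{1}{4}\sqrt{s} - b^2$. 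The difference $\frac{1}{4}(\sqrt{s+x^4}-\sqrt{s}) = \frac{1}{4}\,x^4/(\sqrt{s+x^4}+\sqrt{s})$ tends to $0$ as $s \to \infty$ with $x$ fixed, while $bx - b^2 > 0$ whenever $x > b$; so for a large horizon and a fixed ZAS the chain of inequalities you set up provably fails to yield $(\ref{eq_mixed_ineq})$. This is exactly why the subadditivity $\sqrt{p+q} \leq \sqrt{p}+\sqrt{q}$ works against you: the Penrose bound on the combined area is strictly weaker than the additive right-hand side of the conjecture, and no amount of sharpening the $\epsilon$ in the area estimate recovers the loss.

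Two further points. First, your proposed repair --- normalizing so that $\psi \equiv 1$ on $S$ --- is inconsistent: $\psi$ is $\ol g_0$-harmonic, vanishes on $\Sigma$, and tends to $1$ at infinity, so by the strong maximum principle $\psi < 1$ everywhere in the interior and in particular on $S$; you can impose a Neumann condition on $S$ but not the Dirichlet value $1$. Second, the preliminary steps also need more care than you give them: the Riemannian Penrose Inequality as quoted (Theorem \ref{rpi}) assumes the entire boundary is minimal, and you would need to justify that the outermost minimal area enclosure of $S \cup \Sigma$ in $\ol g_0$ really splits as $S \sqcup \tilde\Sigma$ rather than, say, a single surface enclosing both components, which is where the conjectured positive interaction energy would actually have to be exploited. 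The genuinely missing idea is a mechanism that decouples the two boundary contributions \emph{before} the Penrose Inequality is invoked --- for instance a Bray-type conformal flow or an inverse mean curvature flow argument with controlled jumps past $S$, as you suggest in your alternative --- and that is precisely the open content of the conjecture.
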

The heuristic behind this conjecture is simply that the Newtonian potential energy
between two bodies whose masses have opposite signs is positive, and thus ought to make a positive
contribution to the total (ADM) mass.  By this logic and the Riemannian Penrose and ZAS inequalities, the right hand side of $(\ref{eq_mixed_ineq})$
underestimates the ADM mass.
It is expected that equality would hold only in the cases of flat space, the Schwarzschild 
metric, and the Schwarzschild ZAS metric,
all with points possibly deleted (which correspond to removable ZAS of zero mass).  
See appendix B of \cite{robbins} for further discussion.

A special case to consider is when $\partial M$ consists of exactly two components -- one
connected ZAS and one connected area-outer-minimizing minimal surface -- whose masses are equal
in magnitude but opposite in sign.  The conjecture would predict that the ADM mass of $(M,g)$ is 
nonnegative.  Alternatively, such a manifold may be the most obvious source of a counterexample
to the conjecture.

\subsection{Alternate definitions of mass}
\label{sec_alt_mass}
J. Streets proved the existence and uniqueness of a (weakly defined) inverse mean curvature flow of surfaces ``out'' of
a zero area singularity $\Sigma$ (see \cite{streets}; full details available in unpublished version).  This gives rise to a canonical family of 
surfaces $\{\Sigma_t^*\}$ converging to 
$\Sigma$ as $t \to -\infty$.  Moreover, this family has the property that the limit of the Hawking mass is optimal in the
following sense:
\begin{equation}
\sup_{\{\Sigma_n\}} \limsup_{n \to \infty} m_H(\Sigma_n) = \lim_{t \to -\infty} m_H\left(\Sigma_t^*\right),
\label{zas_hawking_mass}
\end{equation}
where the supremum is taken over all sequences of surfaces $\{\Sigma_n\}$ converging in $C^2$ to $\Sigma$.
(This result assumes that the background metric has nonnegative scalar curvature.)
In other words, the existence of the canonical family obviates the need for taking the sup of a limsup.
Perhaps $(\ref{zas_hawking_mass})$ is a ``better'' definition of mass than that which
we have adopted.  However, it is unknown how to bound the ADM mass in terms of this quantity (unless $\Sigma$ consists of regular ZAS -- see
Corollary \ref{cor_reg_mass}).

We make the following related conjecture, which would show that the mass of ZAS defined via Hawking masses 
underestimates the definition of mass we have chosen.  It is motivated by the first
part of Proposition \ref{reg_mass_thm}.
\begin{conj}
For sequences $\{\Sigma_n\}$ converging in $C^1$ to a ZAS $\Sigma$ of a metric of nonnegative scalar curvature,
\begin{equation*}
\sup_{\{\Sigma_n\}} \limsup_{n \to \infty} m_H(\Sigma_n) \leq m_{\text{ZAS}}(\Sigma).
\end{equation*}
\end{conj}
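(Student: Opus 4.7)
The plan is to compare, for any $C^1$ convergent sequence $\Sigma_n \to \Sigma$, the Hawking mass $m_H(\Sigma_n)$ with the regular mass $m_{\text{reg}}(\Sigma_n)$ computed from the global harmonic resolution $(g,\varphi_n)$ of $\Sigma_n$ (see (\ref{m_reg_sigma_n})). Since $\limsup_n m_{\text{reg}}(\Sigma_n) \leq m_{\text{ZAS}}(\Sigma)$ by Definition \ref{def_zas_mass}, the conjecture would reduce to the asymptotic inequality $\limsup_n m_H(\Sigma_n) \leq \limsup_n m_{\text{reg}}(\Sigma_n)$. First I split on capacity: if $\Sigma$ has positive capacity then $m_{\text{ZAS}}(\Sigma) = -\infty$ by Theorem \ref{thm_pos_cap}, and a natural extension of Robbins's Theorem \ref{thm_capacity} to disconnected ZAS (his weak-IMCF argument should localize to each connected component, using the nonneg scalar curvature hypothesis) gives $\limsup_n m_H(\Sigma_n) = -\infty$, so the inequality is trivial. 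The substantive case is therefore zero capacity.

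For the zero capacity case I propose a two-stage argument. First, for $C^2$ convergent sequences, I would use Streets's canonical IMCF out of $\Sigma$ \cite{streets} together with identity (\ref{zas_hawking_mass}) to express the $C^2$ supremum of $\limsup m_H$ as $\lim_{t \to -\infty} m_H(\Sigma_t^*)$, and then bound this limit by $m_{\text{ZAS}}(\Sigma)$. For each $\Sigma_t^*$ I would pair with the harmonic resolution making $\Sigma_t^*$ a harmonically regular ZAS of the conformal metric $(\varphi_t^*)^4 g$ (which retains nonneg scalar curvature by (\ref{eq_conf_scalar_curv})), apply Proposition \ref{prop_H43} inside this conformal metric, and transfer the resulting identity back to $g$ via (\ref{eq_conf_mean_curv}); Lemma \ref{capacity_lemma} would control the residual error terms as $t \to -\infty$. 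Second, to extend to $C^1$ convergent sequences, I would smooth any such $\Sigma_n$ via a short-time local flow (for instance a truncated mean curvature flow supported in a tubular neighborhood of each $\Sigma_n$) to produce a $C^2$ convergent sequence $\Sigma_n'$ with $\limsup_n m_H(\Sigma_n) \leq \limsup_n m_H(\Sigma_n')$, reducing the $C^1$ case to the $C^2$ case.

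The main obstacle is that the desired inequality is genuinely asymptotic and fails at every finite $n$. A direct calculation for coordinate spheres $\Sigma_n$ in the Schwarzschild ZAS gives $H < 4\nu(\varphi_n)$ pointwise, so $m_H(\Sigma_n) > m_{\text{reg}}(\Sigma_n)$ at every $n$, with equality of the two quantities only in the limit as $\Sigma_n \to \Sigma$. In particular, no pointwise H\"older-type comparison of $\int_{\Sigma_n} H^{4/3}\,dA$ with $\int_{\Sigma_n}(4\nu(\varphi_n))^{4/3}\,dA$ can succeed; the argument must rely on a global, limit-based mechanism such as Geroch monotonicity along Streets's IMCF. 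Making precise the compatibility between the original metric, the sequence of conformal rescalings by the harmonic resolutions, and the behavior in the limit $\Sigma_n \to \Sigma$ is the principal technical difficulty; the smoothing step for $C^1$ sequences adds a further subtlety, as one must verify that the perturbation does not materially decrease the Hawking mass even though it alters the second-order geometry on which the Hawking functional depends.
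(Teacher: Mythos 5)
The first thing to say is that the paper does not prove this statement: it is presented there as an open conjecture, with only the remark that it holds trivially when $\Sigma$ is connected and has positive capacity (both sides then equal $-\infty$ by Theorems \ref{thm_capacity} and \ref{thm_pos_cap}). So the question is not whether you match the paper's argument but whether your plan closes the conjecture, and it does not. Your diagnosis of the obstruction is correct and valuable: the Schwarzschild computation showing $m_H(\Sigma_n) > m_{\text{reg}}(\Sigma_n)$ at every finite $n$ genuinely rules out any termwise comparison, and it explains why the H\"older argument of Proposition \ref{reg_mass_thm} (which succeeds for regular ZAS only because Proposition \ref{prop_H43} identifies the \emph{limit} of $-\left(\frac{1}{16\pi}\int_{\Sigma_n} H^{4/3}\, dA\right)^{3/2}$ with $m_{\text{reg}}(\Sigma)$) does not extend. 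But having named the difficulty, you do not resolve it. In the zero-capacity case the load-bearing step is $\lim_{t\to-\infty} m_H(\Sigma_t^*) \leq m_{\text{ZAS}}(\Sigma)$, and the mechanism you propose --- applying Proposition \ref{prop_H43} to $\Sigma_t^*$ regarded as a harmonically regular ZAS of $(\varphi_t^*)^4 g$ and transferring back via $(\ref{eq_conf_mean_curv})$ --- only recomputes $m_{\text{reg}}(\Sigma_t^*)$ from $\nu(\varphi_t^*)$; it gives no control whatsoever on $\int_{\Sigma_t^*} H^2\, dA$ in the metric $g$, which is what enters $m_H(\Sigma_t^*)$, and the conformal relation degenerates on $\Sigma_t^*$ itself where $\varphi_t^*$ vanishes. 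In the paper's own terms, what is needed is exactly the comparison between the first and third candidate definitions of mass from section \ref{sec_mass_zas}, which the paper explicitly states is unknown; your phrase ``the principal technical difficulty'' is an honest admission that this is where the proof would have to happen, and it does not.

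Two subsidiary steps also have gaps. First, for disconnected $\Sigma$ of positive capacity, extending Theorem \ref{thm_capacity} is not routine: Robbins's argument is Geroch monotonicity along weak IMCF, and the constant $16\pi$ in that monotonicity requires the flowing surface to be a connected sphere --- this is precisely the obstruction cited in Remark 1 after Theorem \ref{thm_zas_ineq_full} to using IMCF in the presence of several ZAS. A more plausible route is to isolate one connected component $\Sigma^1$ of positive capacity and bound $m_H(\Sigma_n)$ from above by the Hawking mass of the corresponding component of $\Sigma_n$ (discarding the nonnegative $\int H^2$ contributions of the other components, and using $|\Sigma_n|\geq |\Sigma_n^1|$ once the bracketed factor is negative), but even then one must verify that the flow out of that component does not reach the other boundary components. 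Second, the reduction from $C^1$ to $C^2$ sequences asserts that a short-time smoothing does not decrease $\limsup_n m_H$; since the Hawking mass depends on the second fundamental form, which is exactly the data the smoothing alters, this requires a quantitative estimate (for instance that $\int H^2$ changes by $o\bigl(|\Sigma_n|^{-1/2}\bigr)$ under the perturbation), and none is offered. As written, the proposal is a program, not a proof, and the statement remains open.
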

The left hand side can be computed by equation
$(\ref{zas_hawking_mass})$ above.
In the case of $\Sigma$ connected, the
conjecture trivially holds for $\Sigma$ of positive capacity, since both sides equal $-\infty$ (Theorems \ref{thm_capacity} and \ref{thm_pos_cap}).  

Along another line of thought, recall the notion of removable $S^1$ singularities from section \ref{sec_removable}.  Our definition of mass
is unsatisfactory for such singularities, since it always $-\infty$.  A more sophisticated definition of mass would give
a finite quantity for removable $S^1$ singularities (and would ideally give a lower bound on the ADM mass in the same sense
as the Riemannian ZAS inequality.)

\subsection{Bartnik mass}
In this section we define the Bartnik mass of a collection of ZAS.
Let $(M,g)$ be an asymptotically flat manifold with boundary $\partial M$ such that $g$ has nonnegative
scalar curvature.  Let $S=\partial \Omega$ be a surface in $M$ that is a graph over $\partial 
M$ (see section \ref{sec_surfaces}).
Recall the \emph{outer Bartnik mass} of $S$ is defined by 
\cites{bray_RPI, bartnik3}
\begin{equation*}
m_{\text{outer}}(S) := \inf_{\mathcal{E}} \:m_{ADM}(\mathcal{E}),
\end{equation*}
where the infimum is taken over all ``valid extensions'' $\mathcal{E}$ of $S$ and $m_{ADM}$ is 
the ADM mass of $\mathcal{E}$.  An \emph{extension} of $S$
is an asymptotically flat manifold with boundary $(N,h)$ of nonnegative scalar curvature 
containing a subset that is
isometric to $\Omega$ and has no singularities outside of $\Omega$; it is said to be 
\emph{valid} if 
$S$ is not enclosed by a surface of less area.  

If $\Sigma = \partial M$ is a collection of ZAS, we define the (outer) \emph{Bartnik mass} of $\Sigma$ as
\begin{equation*}
m_{B}(\Sigma) := \lim_{n \to \infty} m_{\text{outer}}(\Sigma_n),
\end{equation*}
where $\Sigma_n$ is some sequence of surfaces converging in $C^0$ to $\Sigma$.
The limit exists (possibly $-\infty$) and is independent of the choice of sequence because of the monotonicity 
of the outer Bartnik mass (i.e., if $S_1$ encloses $S_2$, then $m_{outer}(S_1) \geq m_{outer}(S_2)$).  A priori,
the Bartnik mass of $\Sigma$ could be positive, negative, or zero.  In fact, there 
may exist examples of ZAS of positive Bartnik mass in a scalar-flat manifold with no 
apparent horizons.  However, the Riemannian ZAS inequality gives a lower bound on the Bartnik mass in 
terms of the mass of the ZAS (assuming Conjecture \ref{conj_conformal}).

Given a surface $S$, one version of the difficult and open ``Bartnik minimal mass extension problem''
is to determine whether there exists some valid extension $\mathcal{E}$
whose ADM mass equals the outer Bartnik mass of $S$.  However, the same problem for a collection of ZAS $\Sigma$ may in 
fact be more tractable. This is because any extension can be viewed as a valid extension, explained as follows.  
Let $\Sigma_n \to \Sigma$ in $C^0$; without loss of generality, we may assume that each surface enclosing $\Sigma_n$ has
area at least that of $\Sigma_n$, and the areas of $\Sigma_n$ decrease monotonically to zero.  Then, given
any extension $\mathcal{E}$ of $\Sigma_n$ (for some $n$), $\mathcal{E}$ gives a valid extension for some $\Sigma_{n'}$ with
$n' \geq n$.  Thus, for the purposes of computing the Bartnik mass of ZAS $\Sigma$, it is not necessary to restrict
only to valid extensions.

\subsection{Extensions to spacetimes and higher dimensions}
The definitions of ZAS, resolutions, mass, and capacity  extend naturally to higher 
dimensions (with suitable changes made to certain 
constants).  Also, the Riemannian Penrose inequality has been generalized to 
dimensions less than eight by the first author and D. Lee 
\cite{bray_lee}.  Thus, the Riemannian ZAS inequality in dimensions less than eight will follow readily if 
Conjecture $\ref{conj_conformal}$ is proved (in dimensions less than eight).

Ultimately, it would be desirable to go beyond the time-symmetric case and
develop a theory of ZAS for arbitrary spacelike slices of spacetimes.  
One might hope to prove a version of the ZAS inequality,
$m \geq m_{\text{ZAS}}(\Sigma)$, in this setting.  In figure \ref{fig_mass_theorems}, we illustrate how such an inequality would
fit in with the positive mass theorem and the Penrose inequality.  We refer the reader to a recent survey article
by the first author \cite{bray_survey}.

\begin{figure}[ht]
\caption{Theorems and conjectures in general relativity}
\begin{center}
\includegraphics[scale=0.5]{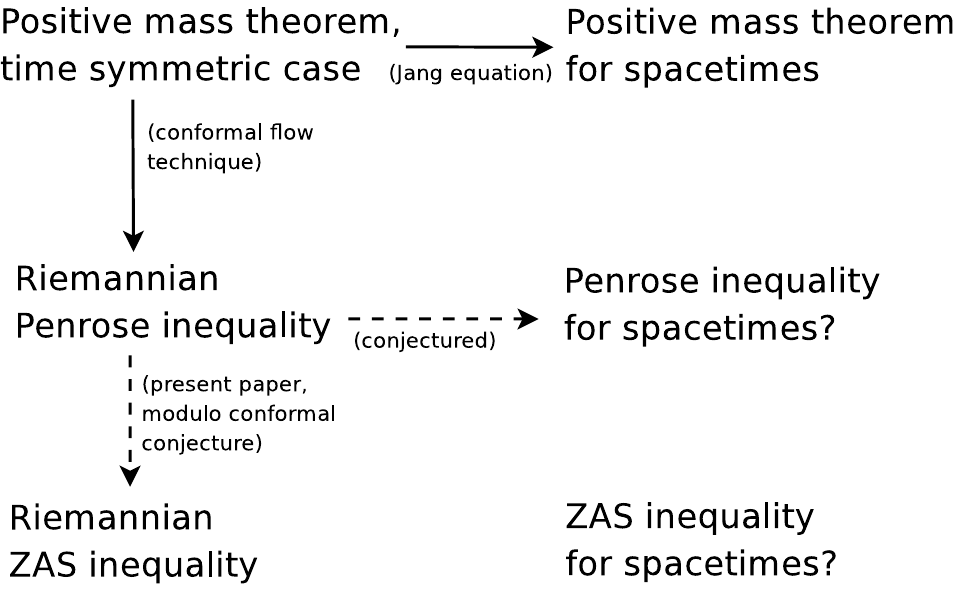}
\end{center}
\flushleft\footnotesize{This diagram illustrates the relationship between the positive mass theorem (upper left) and
some of its generalizations.  By solving the Jang equation, Schoen and Yau showed how to obtain a positive mass theorem for 
spacelike slices of spacetimes obeying the dominant energy condition \cite{schoen_yau_jang}.  In a different direction, the positive mass theorem was used by Bray to prove the Riemannian Penrose inequality (RPI) \cite{bray_RPI}.  (A version of the RPI proved by Huisken and Ilmanen \cite{imcf} did not use the positive mass theorem.) Next, the present paper used the RPI to prove the Riemannian ZAS inequality, modulo Conjecture 
\ref{conj_conformal}.  It is conjectured that the RPI may also be used to prove a version of the 
Penrose inequality for slices of spacetimes, using a generalization of the Jang equation \cites{braykhuri2, braykhuri}.  Lastly, a study of ZAS in 
spacetimes may lead to a type of ZAS inequality in this setting.}
\label{fig_mass_theorems}
\end{figure}

\appendix
\section{Asymptotically flat manifolds and ADM mass}
Global geometric problems in general relativity are often studied on asymptotically flat
manifolds.  Topologically, such spaces resemble $\R^3$ outside of a compact set, and
thus have well-defined notions of ``infinity.'' Geometrically, they approach Euclidean space 
at infinity.
\begin{definition}
A connected Riemannian 3-manifold $(M,g)$ (with or without boundary) is \textbf{asymptotically flat} 
(with one end) if there exists a compact subset $K \subset M$ and a diffeomorphism $\Phi: M 
\sm K \to \R^3 \sm \ol{B_1(0)}$ (where $\ol{B_1(0)}$ is the closed unit ball about the 
origin), 
such that in the coordinates $(x^1, x^2, x^3)$ on $M \sm K$ induced by $\Phi$, the metric 
satisfies, for some $p > \frac{1}{2}$ and $q > 3$,
\begin{equation*}
g_{ij} = \delta_{ij} + O(r^{-p}), \quad \partial_k g_{ij} = O(r^{-p-1}), 
\quad \partial_k \partial_l g_{ij} = O(r^{-p-2}),
\quad R_g = O(r^{-q})
\end{equation*}
for all $i,j,k,l \in \{1,2,3\}$,
where $r =|x|=\sqrt{(x^1)^2 + (x^2)^2 + (x^3)^2}$, $\partial_k = \frac{\partial}{\partial 
x^k}$, and $R_g$ is the scalar curvature of $g$.  The $(x^i)$ are called \textbf{asymptotically flat 
coordinates}.
\end{definition}
(This definition can easily be generalized to allow for several ends and higher dimensions.)  
Unless otherwise stated, it is assumed that $g$ is smooth and extends smoothly to the boundary.

For asymptotically flat manifolds, there is an associated quantity called the ADM 
mass that measures the rate at which the metric becomes flat
at infinity \cite{adm}.  Physically, it represents the mass of an isolated gravitational system.
\begin{definition}
The \textbf{ADM mass} of an asymptotically flat 3-manifold $(M,g)$ is the number
\begin{equation*}
m= \lim_{r \to \infty} \frac{1}{16\pi}\sum_{i,j=1}^3\int_{S_r} 
\left(\partial_i g_{ij} -
                \partial_j g_{ii}\right) \nu^j d\sigma,
\end{equation*}
where the $(x^i)$ are asymptotically  flat coordinates, $S_r$ is the coordinate sphere of radius $r=|x|$,
$\nu$ is the Euclidean unit normal to $S_r$ (pointing toward infinity), and $d\sigma$ is the Euclidean area form on $S_r$.
\label{def_adm_mass}
\end{definition}
Due to a theorem of Bartnik \cite{bartnik}, the mass is finite and depends only on $M$ 
and $g$ (and not the choice of coordinates).  It is left to the reader to show that the 
Schwarzschild metric $\left(1+\frac{m}{2r}\right)^4 \delta$ on $\R^3 \sm B_{|m|/2}(0)$ is 
asymptotically flat with ADM mass equal to $m$, for any real number $m$.  When there is
no ambiguity, we sometimes refer to the ADM mass as simply ``mass.''

\subsection{Outermost minimal surfaces}
\label{sec_omae}
A complete asymptotically flat manifold $(M,g)$ without boundary possesses a unique \emph{outermost 
minimal surface} $S$ (possibly empty, possibly with multiple connected components).  Each 
component of $S$ is minimal (i.e., has zero mean curvature) and is not enclosed by any other 
minimal surface. See figure \ref{fig_omae} for an illustration.
Some well-known (but not obvious) results are that 1) $S$ is a disjoint union
of smooth, immersed 2-spheres (that are in general embedded), 2) the region of $M$ outside of $S$ is 
diffeomorphic to $\R^3$ minus a finite number of closed balls, and 3) $S$ minimizes area in 
its homology class outside of $S$. See \cite{msy} and section 4 of \cite{imcf}
for further details and references.

Related to the above is the concept of outermost minimal area enclosure: let $(M,g)$ be 
an asymptotically flat manifold with smooth, compact boundary $\partial M$.  Then there exists a 
unique embedded surface $\tilde \Sigma$ (possibly of multiple connected components) enclosing $\partial M$ 
such that 1) $\tilde \Sigma$ has the least area among all surfaces that enclose $\partial M$ and 2) $\tilde \Sigma$ is not 
enclosed by a surface of equal area; 
such $\tilde \Sigma$ is called the \emph{outermost minimal area enclosure of $\partial M$} (see figure \ref{fig_omae}).  In general, 
all of the following cases may occur: i) $\tilde \Sigma= \partial M$, ii) $\tilde \Sigma \neq \partial M$ but $\tilde \Sigma \cap 
\partial M$ is nonempty, 
and iii) $\tilde \Sigma \cap \partial M$ is empty.  Note that $\tilde \Sigma$ may only have $C^{1,1}$ regularity
and may not have zero mean curvature (see Theorem 1.3 of \cite{imcf}).  However, $\tilde \Sigma \sm \partial M$ 
is $C^\infty$ with zero mean curvature (from standard minimal surface theory).

\begin{figure}[ht]
\caption{Outermost minimal surface and outermost minimal area enclosure}
\begin{center}
\includegraphics[scale=0.5]{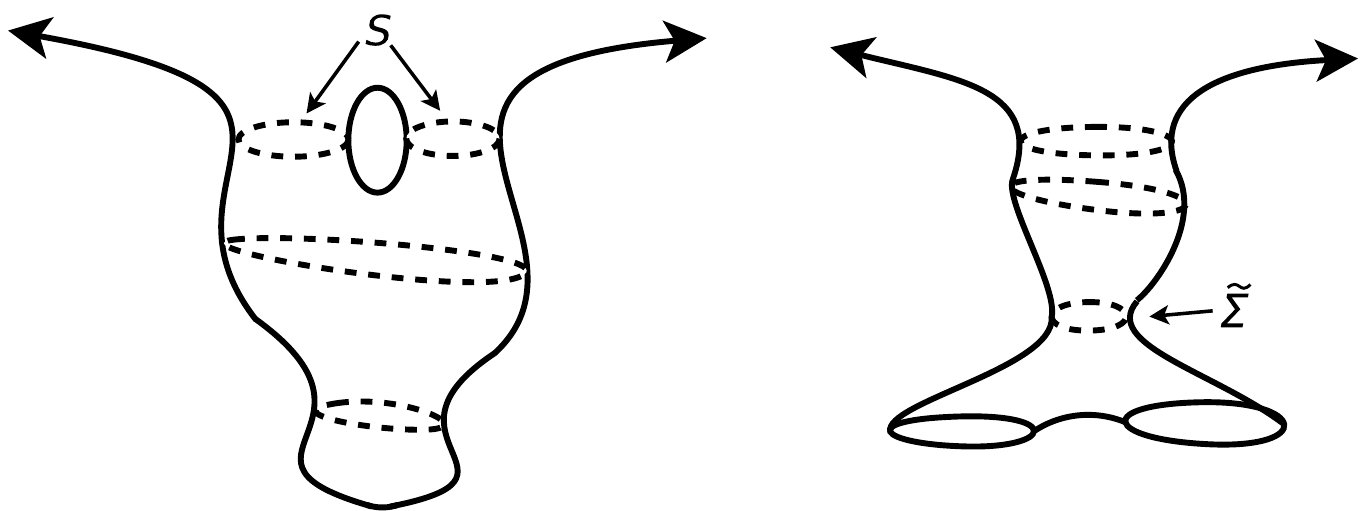}
\end{center}
\flushleft\footnotesize{Pictured on the left and right are asymptotically flat manifolds without and with boundary, respectively.
The dotted contours denote minimal surfaces.  
The surface labeled $S$, consisting of two components, is the outermost minimal surface.
The surface labeled $\tilde \Sigma$ is the outermost
minimal area enclosure of the boundary, and in this case it is disjoint from the boundary.}
\label{fig_omae}
\end{figure}

\section{Formulas for conformal metrics in dimension 3}
Here, we present several formulas that describe the behavior of certain
geometric quantities under conformal changes. Assume $g_1$ and $g_2$ are Riemannian 
metrics on a 3-manifold such that $g_2 = u^4 g_1$, where $u>0$.
\subsection{Laplacian}For any smooth function $\phi$, direct calculation shows
\begin{equation}
\Delta_{1} (u \phi) = u^5 \Delta_{2}(\phi) + \phi \Delta_{1} (u),
\label{eq_conf_laplacian}
\end{equation}
where $\Delta_{1}$ and $\Delta_{2}$ are the (negative spectrum) Laplace operators for $g_1$ and $g_2$, respectively 
\cite{bray_RPI}. In particular, if $\Delta_1 u = 0$, then $\Delta_2 (1/u)=0$.  More generally, 
if $\Delta_1 u = \Delta_1 \psi = 0$, then $\Delta_2 (\psi/u)=0$.
\subsection{Scalar curvature}
If $R_1$ and $R_2$ are the 
respective scalar curvatures of $g_1$ and $g_2$, then
\begin{equation}
R_2 = u^{-5}(-8\Delta_1 u + R_1 u).
\label{eq_conf_scalar_curv}
\end{equation}
In particular, if $u$ is harmonic with respect to $g_1$, then $g_1$ has nonnegative scalar
curvature if and only if $g_2$ does.
\subsection{Mean curvature}
If $S$ is a hypersurface of mean curvature $H_i$ with respect to $g_i$ (and outward unit normal $\nu_i$), $i=1,2$, then
\begin{equation}
H_2 = u^{-2} H_1 + 4u^{-3} \nu_1(u).
\label{eq_conf_mean_curv}
\end{equation}
Our convention is such that the mean curvature of a round sphere in flat $\R^3$ is positive.
\vspace{0.15in}
\subsection{ADM mass}
If $g_2=u^4 g_1$ are asymptotically flat with respective ADM masses $m_1$ and $m_2$,
and if $u \to 1$ at infinity, then
\begin{equation}
m_1 - m_2 = \frac{1}{2\pi} \lim_{r \to \infty} \int_{S_r} \nu(u) d\sigma,
\label{eq_conf_masses}
\end{equation}
where, in some asymptotically flat coordinate system $(x^i)$, $S_r$ is the coordinate sphere of radius $r=|x|$,
$\nu$ is the Euclidean unit normal to $S_r$ (pointing toward infinity), and $d\sigma$ is the Euclidean area form on $S_r$.

\begin{bibdiv}
\begin{biblist}

\bib{adm}{article}{
	title={Coordinate invariance and energy expressions in general relativity},
	author={R. Arnowitt},
	author={S. Deser},
	author={C. Misner},
	journal={Phys. Rev.},
	volume={122},
	date={1961},
	pages={997--1006}
}	

\bib{bartnik}{article}{
	title={The mass of an asymptotically flat manifold},
	author={R. Bartnik},
	journal={Comm. Pure Appl. Math.},
	volume={39},
	date={1986},
	pages={661--693}
}	

\bib{bartnik2}{article}{
	title={New definition as quasilocal mass},
	author={R. Bartnik},
	journal={Phys. Rev. Lett.},
	volume={62},
	date={1989},
	pages={2346--2348}
}	

\bib{bartnik3}{article}{
	title={Mass and 3-metrics of non-negative scalar curvature},
	author={R. Bartnik},
	journal={Proceedings of the ICM (Beijing, 2002)},
	volume={2},
	pages={231--240}
}	

\bib{bondi}{article}{
	title={Negative mass in general relativity},
	author={H. Bondi},
	journal={Rev. Modern Phys.},
	volume={29},
	number={3},
	date={1957},
	pages={423--428}
}	

\bib{bonnor}{article}{
	title={Negative mass in general relativity},
	author={W.B. Bonnor},
	journal={Gen. Relativity Gravitation},
	volume={21},
	number={11},
	date={1989},
	pages={1143--1157}
}	

\bib{bray_npms}{article}{
	title={Negative point mass singularities in general relativity},
	author={H.L. Bray},
	eprint={http://www.newton.ac.uk/webseminars/pg+ws/2005/gmr/0830/bray/},
	conference={
		title={Global problems in mathematical relativity},
		address={Isaac Newton Institute, University of Cambridge},
		date={2005-08-30}
	}
}	

\bib{bray_survey}{article}{
   author={H.L. Bray},
   title={On the positive mass, Penrose, and ZAS inequalities in general dimension},
   book={
      editor={H.L. Bray},
      editor={W.P. Minicozzi},
      series={Surveys in Geometric Analysis and Relativity},
      volume={20 ALM},
      publisher={Higher Education Press/International Press},
      place={Beijing/Boston},
   },
   date={2011}
}

\bib{bray_RPI}{article}{
	title={Proof the Riemannian Penrose inequality using the positive mass theorem},
	author={H.L. Bray},
	journal={J. Differential Geom.},
	volume={59},
	date={2001},
	pages={177--267}
}	

\bib{braykhuri2}{article}{
	title={A Jang equation approach to the Penrose inequality},
	author={H.L. Bray},
	author={M.A. Khuri},
        journal={Discrete Contin. Dyn. Syst.},
	date={2010},
        volume={27},
	number={2},
        pages={741--766}
}

\bib{braykhuri}{article}{
	title={P.D.E.'s which imply the Penrose conjecture},
	author={H.L. Bray},
	author={M.A. Khuri},
        journal={Asian J. Math.},
	date={2011},
        volume={15},
        number={4},
        pages={557--610}
}	

\bib{bray_lee}{article}{
        title={On the Riemannian Penrose inequality in dimensions less than eight},
	author={H.L. Bray},
	author={D.A. Lee},
        journal={Duke Math. J.},
   volume={148},
   date={2009},
   number={1},
   pages={81--106},
}	

\bib{bray_miao}{article}{
	title={On the capacity of surfaces in manifolds with nonnegative scalar curvature},
	author={H.L. Bray},
	author={P. Miao},
	journal={Invent. Math.},
	volume={172},
	number={3},
	date={2008},
	pages={459--475}
}	

\bib{gibbons}{article}{
	title={On the stability of naked singularities with negative mass},
	author={G.W. Gibbons},
	author={S.A. Hartnoll},
	author={A. Ishibashi},
	journal={Progr. Theoret. Phys.},
	volume={113},
	number={5},
	date={2005},
	pages={963--978}
}	

\bib{gleiser}{article}{
	title={Instability of the negative mass Schwarzschild naked singularity},
	author={R.J. Gleiser},
	author={G. Dotti},
	journal={Classical Quantum Gravity},
	volume={23},
	date={2006},
	pages={5063--5077}
}

\bib{imcf}{article}{
	title={The inverse mean curvature flow and the Riemannian Penrose inequality},
	author={G.  Huisken},
	author={T. Ilmanen},
	journal={J. Differential Geom.},
	volume={59},
	date={2001},
	pages={353--437}
}

\bib{jauregui_hci}{article}{
	title={Invariants of the harmonic conformal class of an asymptotically flat manifold},
	author={J. Jauregui},
	journal={Comm. Anal. Geom.},
        volume={20},
        number={1},
	date={2012},
	pages={163--202}
}	

\bib{jauregui}{thesis}{
	author={J. Jauregui},
	title={Mass estimates, conformal techniques, and singularities in general relativity},
	type={Ph.D. thesis},
	organization={Duke University},
	date={2010}
}

\bib{jauregui_conf_flat}{article}{
	title={Penrose-type inequalities with a Euclidean background},
	author={J. Jauregui},
	eprint={http://arxiv.org/abs/1108.4042},
	date={2011}
}

\bib{lam}{thesis}{
	author={M.-K. G. Lam},
  title={The graph cases of the Riemannian positive mass and Penrose inequalities in all dimensions},
  type={Ph.D. thesis},
  organization={Duke University},
  date={2011}
}

\bib{msy}{article}{
	title={Embedded minimal surfaces, exotic spheres, and manifolds with positive Ricci curvature},
	author={W.H. Meeks III},
	author={L.M. Simon},
	author={S.-T. Yau},
	journal={Ann. of Math.},
	volume={116},
	date={1982},
	pages={621--659}
}

\bib{penrose}{article}{
	title={Naked singularities},
	author={R. Penrose},
	journal={Ann. New York Acad. Sci.},
	volume={224},
	date={1973},
	pages={125--134}
}	

\bib{robbins}{thesis}{
	title={Negative point mass singularities in general relativity},
	author={N. Robbins},
	type={Ph.D. thesis},
	organization={Duke University},
	date={2007}
}

\bib{robbins_paper}{article}{
   author={N. Robbins},
   title={Zero area singularities in general relativity and inverse mean
   curvature flow},
   journal={Classical Quantum Gravity},
   volume={27},
   date={2010},
   number={2}
}

\bib{schoen_yau}{article}{
	title={On the proof of the positive mass conjecture in general relativity},
	author={R. Schoen},
	author={S.-T. Yau},
	journal={Comm. Math. Phys.},
	volume={65},
	date={1979},
	pages={45--76}
}	

\bib{schoen_yau_jang}{article}{
	title={Positivity of the total mass of a general space-time},
	author={R. Schoen},
	author={S.-T. Yau},
	journal={Phys. Rev. Lett.},
	volume={43},
	date={1979},
	pages={1457--1459}
}	

\bib{streets}{article}{
	title={Quasi-local mass functionals and generalized inverse mean curvature flow},
	author={J. Streets},
	journal={Comm. Anal. Geom.},
	volume={16},
	date={2008},
	pages={495--537}
}

\bib{witten}{article}{
	title={A new proof of the positive energy theorem},
	author={E. Witten},
	journal={Comm. Math. Phys.},
	volume={80},
	date={1981},
	pages={381-402}
}	

\end{biblist}
\end{bibdiv}

\end{document}